% File asldoc.tex, August 20, 2000
% This version describes only endproofeqnarray
% \documentclass[xconference]{asl}
% This version compiles with CMR fonts, but it is necessarily incomplete
\documentclass[conference]{asl}
\usepackage{rotate}
\usepackage{url}

% Environment definitions
\newtheorem{theorem}{Theorem}[section]
\newtheorem{lemma}[theorem]{Lemma}

\newtheorem{corollary}[theorem]{Corollary}
\newtheorem{proposition}[theorem]{Proposition}

\theoremstyle{definition}
\newtheorem{definition}[theorem]{Definition}

\theoremstyle{remark}
\newtheorem{remark}[theorem]{Remark}
\newtheorem {question}[theorem]{Question}

\numberwithin{equation}{section}

\usepackage{hyperref}
\usepackage{cleveref}

% Macros

\newcommand{\uh}[0]{{\upharpoonright}}

% BB Letters

\newcommand{\NN}{\mathbb{N}}

% Cal Letters
\renewcommand{\L}{\mathcal{L}}

\newcommand{\R}{\mathcal{R}}

\newcommand{\U}{\mathcal{U}}

% Mathematiques à rebours
\newcommand{\RCA}{\mathsf{RCA}}

\newcommand{\WKL}{\mathsf{WKL}}

\newcommand{\RT}{\mathsf{RT}}
\newcommand{\COH}{\mathsf{COH}}
\newcommand{\SRT}{\mathsf{SRT}}
\newcommand{\ADS}{\mathsf{ADS}}
\newcommand{\EM}{\mathsf{EM}}

\newcommand{\BSig}{\mathsf{B}\Sigma^0}
\newcommand{\BPi}{\mathsf{B}\Pi^0}

%%%%%%%%%%
% Custom %
%%%%%%%%%%
\usepackage{stmaryrd} % pour llbracket
\usepackage{xcolor} % pour les métacommentaires
\usepackage{tikz-cd} % pour les graphiques
\newcommand{\coloneqq}{\mathrel{\mathop:}=}

\newcommand{\ran}{\mathsf{ran}}
\usepackage{centernot} % pour avoir un \not bien centré

\newcommand{\lt}{{<}} % enleve l'espacement de <

\usepackage{xifthen}
\newcommand{\ce}{c.e.\ }
\newcommand{\tce}{c.e.}
\newcommand{\ie}{i.e.\ }
\newcommand{\TCAC}[1][]{\mathsf{CAC\ for\ \ifthenelse{\equal{#1}{}}{}{#1\ } trees}}
\newcommand{\SAC}{\mathsf{SAC}}
\newcommand{\CAC}{\mathsf{CAC}}
\newcommand{\TAC}{\mathsf{TAC}}

\newcommand{\SHER}{\mathsf{SHER}}
\newcommand{\SADS}{\mathsf{SADS}}

\newcommand{\HYP}{\mathsf{HYP}}
\newcommand{\PA}{\mathsf{PA}}
\newcommand{\DNC}{\mathsf{DNC}}

\newcommand{\BS}{\mathsf{B}\Sigma}

%%%%%%%%%%%%%%
% Fin Custom %
%%%%%%%%%%%%%%

\title{The Reverse Mathematics of CAC for trees}

\author{Julien Cervelle}

\address{Univ Paris Est Creteil, LACL, F-94010 Creteil, France}

\email{julien.cervelle@u-pec.fr}

\urladdr{https://jc.lacl.fr}

\author{William Gaudelier}

\address{Univ Paris Est Creteil, LACL, F-94010 Creteil, France}

\email{william.gaudelier@gmail.com}

\author{Ludovic Patey}

\address{CNRS, Équipe de Logique\\Universit\'e de Paris\\ Paris, FRANCE}

\email{ludovic.patey@computability.fr}

\urladdr{http:/ludovicpatey.com}

\thanks{This project started as the study of Ramsey-like theorems for 3-variable forbidden patterns. The attempt to prove \Cref{p2-impl-rt22} naturally led to the study of the $\SHER$ principle, already defined by Dorais and al.~\cite{dorais2016uniform}. Thanks to multiple personal communications with François Dorais, we realized that the $\SHER$ principle is closely related to trees, and more precisely, equivalent to the Chain-Antichain principle for trees, a principle studied by Binns et al.\ in \cite{binns2014}. We later realized that $\SHER$ is also equivalent to~$\TAC+\BS^0_2$, where \(\TAC\) is an antichain principle for completely branching c.e. trees, defined by Conidis~\cite{conidistac}. Some of the results are therefore independent rediscoveries of some theorems from~\cite{binns2014,conidistac}, but in a more unified setting. The authors are thankful to Chris Conidis, François Dorais and Alberto Marcone for interesting comments and discussions. The authors are also thankful to the anonymous referee for his careful reading and his numerous improvement suggestions. The authors were partially supported by grant ANR ``ACTC'' \#ANR-19-CE48-0012-01.}

%%%%%%%%%%%%%%%%%%%%%%%%%

\begin{document}

\begin{abstract}
	\(\TCAC\) is the statement asserting that any infinite subtree of~\(\NN^{<\NN}\) has an infinite path or an infinite antichain. In this paper, we study the computational strength of this theorem from a reverse mathematical viewpoint. We prove that~\(\TCAC\) is robust, that is, there exist several characterizations, some of which already appear in the literature, namely, the statement~\(\SHER\) introduced by Dorais et al.\ \cite{dorais2016uniform}, and the statement~\(\TAC+\BS^0_2\) where \(\TAC\) is the tree antichain theorem introduced by Conidis \cite{conidistac}. We show that~\(\TCAC\) is computationally very weak, in that it admits probabilistic solutions.
\end{abstract}

\maketitle

\section{Introduction}

%\ludovic{La ou les premières phrases de l'intro sont censées reprendre une partie de l'abstract : ça dit ce qu'on étudie. Plein de gens ne lisent pas l'abstract, et donc il faut avoir une redondance.}

In this paper we study the computability-theoretic strength of the statement~\(\TCAC\), which is a variation on the well-studied chain-antichain theorem (\(\CAC\)). It turns out~\(\TCAC\) has different characterizations, making it a robust notion, suitable for future studies in reverse mathematics.

%\ludovic{Ensuite, dire qu'on va utiliser deux frameworks pour analyser calculatoirement les énoncés : les maths à rebours, et la réduction calculatoire. Quelques phrases d'intro sur les deux, et des références (livre de Simpson pour les maths à rebours, livre de Hirschfeldt pour les maths à rebours et la réduction calculatoire.}

We are going to use two frameworks: reverse mathematics and computable reduction. For a good and more complete introduction to reverse mathematics, see Simpson \cite{simpson2009subsystems}, or Hirschfeldt \cite{hirschfeldt2015slicing} which also covers the computable reduction and classical results on Ramsey's theorem.

Reverse mathematics is a foundational program which seeks to determine the optimal axioms to prove ``ordinary" theorems. It itself uses the framework of subsystems of second-order arithmetic, with a base theory called~\(\RCA_0\), which informally captures ``computable mathematics".

Computable reduction makes precise the idea of being able to computably solve a problem~\(P\) using another problem~\(Q\). A problem is defined as a~\(\Pi_2^1\) formula in the language of second-order arithmetic, thought to be of the form~\(\forall X (\psi(X)\implies \exists Y \varphi(X, Y))\), where~\(\psi\) and~\(\varphi\) are arithmetical formulas. An instance of a problem is a set~\(X\) verifying~\(\psi(X)\), and a solution of an instance~\(X\) is a set~\(Y\) such that~\(\varphi(X, Y)\).
With this formalism, we say that ``\(P\) is computably reducible to~\(Q\)", and we write~\(P\leqslant_c Q\) if for any instance~\(I\) of~\(P\), there is an~\(I\)-computable instance~\(\widehat{I}\) of~\(Q\), such that, for any solution~\(\widehat{S}\) of~\(Q\), there is an~\(\widehat{S}\oplus I\)-computable solution~\(S\) of~\(P\). %In the case where~\(S\) can be computed solely from~\(\widehat{S}\) we say that ``\(P\) is strongly computably reducible to~\(Q\)" and write~\(P\leqslant_{sc} Q\).

%\ludovic{Ensuite, parler du cas particulier du théorème de Ramsey pour les paires en maths à rebours, qui échappent au phénomène des Big Five. Dire que l'on ne va étudier que des conséquences de ce théorème.}

The early study of reverse mathematics has seen the emergence of four subsystems of second-order arithmetic, linearly ordered by the provability relation, such that most of the ordinary theorems are either provable in~\(\RCA_0\), or equivalent in~\(\RCA_0\) to one of them. These subsystems, together with~\(\RCA_0\), form the ``Big Five”. Among the theorems studied in reverse mathematics, Ramsey’s theorem for pairs~\(\RT_2^2\) plays an important role, since it is historically the first example of a natural statement which does not satisfy this empirical observation. The theorems we study in this paper are all consequences of~\(\RT_2^2\).

\subsection{A chain-antichain theorem for trees}

Among the consequences of Ramsey's theorem for pairs, the chain-antichain theorem received a particular focus in reverse mathematics.
%\ludovic{Parler de CAC étudié par Hirschfeldt et Shore. En donner la définition formelle, et expliquer que c'est une conséquence du théorème de Ramsey pour les paires.}

\begin{definition}[\(\CAC\), chain-antichain theorem]
	Every infinite partial order has either an infinite chain or an infinite antichain.
\end{definition}

\(\CAC\) was first studied in \cite{hirschfeldt2007combinatorial} by Hirschfeldt and Shore, following a question raised by Cholak, Jockusch and Slaman in \cite[Question 13.8]{cholak2001strength} asking whether or not~\(\CAC\implies\RT_2^2\) over~\(\RCA_0\), for which they proved the answer is negative (Corollary 3.12).
The reciprocal~\(\RCA_0\vdash\RT_2^2\implies\CAC\) is easier to obtain, by defining a coloring such that~\(\{x, y\}\) has color 1 if its elements are comparable, and 0 otherwise. Any homogeneous set for this coloring is either a chain or an antichain, depending on its color.

In this article, we focus on the special case where the order is the predecessor relation~\(\prec\) on a tree.
\begin{definition}[\(\TCAC\)]
	Every infinite subtree of~\(\NN^{<\NN}\) has an infinite path or an infinite antichain.
\end{definition}

%\ludovic{Donner des références sur l'article qui introduit CAC for trees, et mentionner leurs principaux résultats.}
This statement was first introduced by Binns et al.\ in \cite{binns2014}, where the authors showed that every infinite computable tree must have either an infinite computable chain or an infinite~\(\Pi_1^0\) antichain. Furthermore, they showed that these bounds are optimal, by constructing an infinite computable tree which has no infinite~\(\Sigma_1^0\) chain or antichain. They also showed that~\(\RCA_0 + \WKL\centernot\vdash\TCAC[binary]\) (see \Cref{tcac2}).

\subsection{Ramsey-like theorems}

%\ludovic{Parler des ramsey-like theorems, en terme de forbidden patterns. Exprimer le théorème d'Erdos-Moser, et expliquer que RT22 est décomposable en EM et ADS. Ensuite, exprimer les trois types de forbidden patterns (avec les énoncés tilde{P1}, tilde{P2}, tilde{P3}.}
%\ludovic{Expliquer que l'un d'entre eux implique RT22 à la fois en reverse maths, le second est EM asymétrique, et expliquer le lien entre le troisième et CAC for trees. Expliquer que le troisième est aussi lié aux graphes square-free, et mentionner Dorais.}
In \cite{patey2019ramsey}, Patey identified a formal class of theorems, encompassing several statements surrounding Ramsey's theorem. Indeed, many of them are of the form ``for every coloring~\(f:[\NN]^n\to k\) avoiding some set of forbidden patterns, there exists an infinite set~\(H\subseteq\NN\) avoiding some other set of forbidden patterns (relative to~\(f\))”

For example the Erd\H{o}s-Moser theorem (\(\EM\)) asserts that, ``for any coloring~\(f:[\NN]^2\to 2\), there exists an infinite set~\(H\subseteq\NN\) which is transitive for~\(f\)", \ie~\(\forall i<2, \forall x\lt y\lt z\in H, f(x, y)=i\land f(y, z)=i\implies f(x, z)=i\). In other terms, we want~\(H\) to avoid the patterns that would make it not transitive for \(f\), \ie~\(f(x, y)=i\land f(y, z)=i\land f(x, z)=1-i\) for any~\(i<2\).
Another example comes from~\(\ADS\) which is equivalent over~\(\RCA_0\) (see \cite[Theorem 5.3]{hirschfeldt2007combinatorial}) to the statement ``for any transitive coloring~\(f:[\NN]^2\to 2\) (\ie avoiding certain patterns), there exists an infinite set~\(H\subseteq\NN\) which is~\(f\)-homogeneous".
With these definitions, one sees that~\(\RT_2^2\) is equivalent to~\(\EM+\ADS\) over~\(\RCA_0\), since~\(\EM\) takes any coloring and ``turns it into" a transitive one, and~\(\ADS\) takes any transitive coloring and finds an infinite set which is homogeneous for it.

Forbidden patterns on 3 variables and 2 colors are generated by the following three basic patterns:
\begin{itemize}
	\item[(1)] \(f(x, y)=i\land f(y, z)=i\land f(x, z)=1-i\)
	\item[(2)] \(f(x, y)=i\land f(y, z)=1-i\land f(x, z)=i\)
	\item[(3)] \(f(x, y)=1-i\land f(y, z)=i\land f(x, z)=i\)
\end{itemize}
Avoiding them respectively leads to \textbf{transitivity}, \textbf{semi-ancestry}, and \textbf{semi-heredity} (for the color~\(i\)). Each of them generates two ramsey-like statements, one restricting the input coloring, and one restricting the output infinite set, namely ``for any 2-coloring of pairs avoiding the forbidden pattern, there exists an infinite homogeneous set'' and ``for any 2-coloring of pairs, there exists an infinite set~\(H\subseteq\NN\) which avoids the forbidden pattern''. We now survey the known results about these three patterns.

\emph{Transitivity.} The statement ``for any 2-coloring of pairs, there exists an infinite set which is transitive for some color'' is a weaker version of~\(\EM\). The Erdös-Moser theorem was proven to be strictly weaker than Ramsey's theorem for pairs over~\(\RCA_0\) by Lerman, Solomon and Towsner~\cite[Corollary 1.16]{lerman2013separating}. On the other hand, the statement ``for any 2-coloring of pairs which is transitive for some color, there exists an infinite homogeneous set'' is equivalent to~\(\CAC\) (see \cite[Theorem 5.2]{hirschfeldt2007combinatorial}), which is also known to be strictly weaker than~\(\RT_2^2\) over~\(\RCA_0\) (see Hirschfeldt and Shore~\cite[Corollary 3.12]{hirschfeldt2007combinatorial}).

\emph{Semi-ancestry.} %\ludovic{Dire que l'un des énoncés est une conséquence de l'énoncé STRIV de françois Dorais, qui est calculatoirement vrai, et que l'autre est équivalent à RT22.}
The statement ``for any 2-coloring of pairs which has semi-ancestry for some color, there exists an infinite homogeneous set'' is a consequence of the statement~\(\mathsf{STRIV}\), defined by Dorais et al.~\cite[Statement 5.12]{dorais2016uniform}), because a 2-coloring is semi-trivial if and only if it has semi-ancestry.
And~\(\mathsf{STRIV}\) itself is equivalent to~\(\forall k, \RT_k^1\) (see the remark below its definition).
The statement ``for any 2-coloring of pairs, there exists an infinite set which has semi-ancestry for some color'' is equivalent to~\(\RT_2^2\) (see \Cref{p1-impl-rt22}).

\emph{Semi-heredity.} %\ludovic{TODO, expliquer que l'un donne SHER, qu'on montrera équivalent à CAC for trees, et l'autre sera équivalent à RT22, mais de façon indirecte.}
The statement ``for any 2-coloring of pairs which is semi-hereditary for some color, there exists an infinite homogeneous set'' is the statement~\(\SHER\), which was first introduced by Dorais et al.~\cite[Statement 5.11]{dorais2016uniform}. In \Cref{sect:sher}, we will show that it is equivalent to~\(\TCAC\).
Finally, the statement ``for any 2-coloring of pairs, there exists an infinite set which is semi-hereditary for some color'' is equivalent to~\(\RT_2^2\) (see \Cref{p2-impl-rt22}).

\subsection{Notation}
%\ludovic{Définir les chaînes dans $\omega^{<\omega}$ avec la notion de longueur, concaténation, ...}
%\ludovic{Définir ce qu'est un arbre, un chemin, une antichaine}
Let the symbols~\(\exists^\infty x\) and~\(\forall^\infty x\) be abbreviations for \(\forall y, \exists x>y\) and~\(\exists y, \forall x>y\) respectively. In particular they are the dual of each other.

Given~\(x, y\in \NN\cup\{\pm\infty\}\), we define~\(\llbracket x, y \rrbracket\coloneqq \{z\in\NN\mid z\geqslant x\land z\leqslant y\}\), an inequality being strict when its respective bracket is flipped, e.g.\ \(\llbracket x, y \llbracket\ \coloneqq  \{z\in\NN\mid z\geqslant x\land z<y\}\). As in set theory, an integer~\(n\) can also represent the set of all integers strictly smaller to it, \ie~\(\llbracket 0, n\llbracket\). Moreover \(\langle -, \ldots, - \rangle\) represents a bijection from \(\NN^n\) to \(\NN\) (for some \(n\)), which verifies \(\forall x, y\in\NN, \langle x_1,\ldots, x_k\rangle\geqslant \max\{x_1,\ldots, x_k\}\) and which is increasing on each variable. Given a set \(A\subseteq\NN\), we denote by \(\chi_A\) its \textbf{indicator function}.

A \textbf{string} is a finite sequence of integers, and the set of all strings is denoted~\(\NN^{<\NN}\). In particular, a \textbf{binary string} is a finite sequence of 0 and 1, and the set of all binary strings is denoted~\(2^{<\NN}\). The \textbf{length} of a given string~\(\sigma:n\to\NN\) is the integer~\(|\sigma|\coloneqq n\). The \textbf{empty string}~\(\varnothing\to\NN\) is denoted~\(\varepsilon\). Given two strings~\(\sigma\) and~\(\tau\) of respective length~\(\ell\) and~\(m\), we define their \textbf{concatenation} as the finite sequence~\(\sigma\cdot\tau:\ell+m\to\NN\) which, given~\(j<\ell+m\), associates~\(\sigma(j)\) if~\(j<\ell\), and~\(\tau(j-\ell)\) otherwise.
So we usually write a string~\(\sigma:n\to\NN\) as~\(\sigma_0\cdot\ldots\cdot \sigma_{n-1}\), where~\(\forall j<n, \sigma_j\coloneqq \sigma(j)\).
We define the partial order on strings ``is \textbf{prefix} of", denoted \(\prec\),  by~\(\sigma\prec\tau\iff |\sigma|<|\tau|\land\forall j<|\sigma|, \sigma(j)=\tau(j)\), and denote by~\(\preccurlyeq\) its reflexive closure. When two strings \(\sigma\) and \(\tau\) are \textbf{incomparable} for \(\prec\), we write \(\sigma\bot\tau\).

A \textbf{tree}~\(T\) is a subset of~\(\NN^{<\NN}\) which is downward closed for~\(\prec\), i.e. \(\forall \sigma\in T, \tau\prec\sigma\implies \tau\in T\). A \textbf{binary tree} is a subset of~\(2^{<\NN}\) which is downward closed for~\(\prec\). A subset~\(S\subseteq T\) of a tree is a \textbf{chain} when it is linearly ordered for~\(\prec\), and it is an \textbf{antichain} when its elements are pairwise incomparable for~\(\prec\).

\subsection{Organization of the paper}

In \Cref{sect:robustness}, we prove the robustness of~\(\TCAC\) in reverse mathematics and over the computable reduction, by proving its equivalence with several variants of the statement. In \Cref{sect:probabilistic-proof}, we provide a probabilistic proof of~\(\TCAC\), and give a precise analysis of this proof in terms of \(\DNC\) functions. %\ludovic{TODO, suite de l'organisation du papier}
In \Cref{sect:adsem} we show that both~\(\ADS\) and~\(\EM\) imply~\(\TCAC\). In \Cref{sect:hyper} we prove there is a computable instance of~\(\TAC\) whose solutions are all of hyperimmune degree, with almost explicit witness. In particular we show a computable instance of~\(\TAC\) can avoid a uniform sequence of~\(\Delta_2^0\) sets.
In \Cref{sect:sher}, we show the equivalence between~\(\TCAC\) and~\(\SHER\).
In \Cref{sect:stable}, we explore the relations between stable versions of previously mentioned statements, namely~\(\TCAC\), \(\ADS\) and~\(\SHER\), leading to the result that~\(\TCAC[stable\ \tce]\) admits low solutions.
Finally, in \Cref{sect:conclu}, we conclude the paper with remaining open questions and summary diagrams. 

\section{Equivalent definitions}\label[section]{sect:robustness}
In this section, we study some variations of~\(\TCAC\), and prove they are all equivalent. We also study~\(\TAC\) and show that \(\TAC+\BS^0_2\) is equivalent to \(\TCAC\). We start by defining these statements.

\begin{definition}[\textsf{CAC for \ce (binary) trees}]\label[definition]{tcac2}
	Every infinite \ce (binary) subtree of~\(\NN^{<\NN}\) has an infinite path or an infinite antichain.
\end{definition}

\begin{remark}
	In the context of reverse mathematics ``being \ce\kern-.4em" is a notion that is relative to the model considered, \ie an object is \ce when it can be approximated in a \ce manner by objects from the model.
\end{remark}

\begin{definition}[Completely branching tree]
	A node~\(\sigma\) of a tree~\(T\subseteq \NN^{<\NN}\) is a \textbf{split node} when there is \(n_0, n_1\in\NN\) such that \(\forall i<2, \sigma\cdot n_i\in T\).
In particular, if \(T\) is a binary tree, then \(\sigma\) is a split node when both~\(\sigma\cdot 0\in T\) and~\(\sigma\cdot 1\in T\).
	A tree~\(T\subseteq 2^{<\NN}\) is \textbf{completely branching} when, for any of its node~\(\sigma\), if~\(\sigma\) is not a leaf then it is a split node.
\end{definition}

The following statement was introduced by Conidis~\cite{conidistac} (personal communication), motivated by the reverse mathematics of commutative noetherian rings. % [Definition 3.2]

\begin{definition}[\(\TAC\) \cite{conidistac}, tree antichain theorem]
	Any infinite \ce subtree of~\(2^{<\NN}\) which is completely branching, contains an infinite antichain.
\end{definition}

Conidis proved that $\TAC$ follows from~$\CAC$ over~$\RCA_0$, and constructed an instance of~$\TAC$, all of whose solutions are of hyperimmune degree. In particular, $\RCA_0 + \WKL \nvdash \TAC$. 
Now we can proceed with the proof of the equivalence.

\begin{theorem}\label[theorem]{big-equiv}
The following statements are equivalent over~$\RCA_0$ and computable reduction:
\begin{itemize}
	\item[(1)] \(\TCAC\)
	\item[(2)] \(\TCAC[\tce]\)
	\item[(3)] \(\TCAC[\ce binary]\)
	\item[(4)] \(\TAC+\BS^0_2\)
\end{itemize}
\end{theorem}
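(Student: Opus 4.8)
The strategy is to prove a cycle of implications rather than pairwise equivalences. The easy directions are $(2)\Rightarrow(1)$, since every tree is in particular a c.e.\ tree (it is approximated by itself), and similarly $(2)\Rightarrow(3)$ modulo the observation that a c.e.\ binary tree is a special case of a c.e.\ tree — but one must be careful here, because the natural direction is actually $(1)\Rightarrow$ or $(3)\Rightarrow$ something, so let me organize it as $(2)\Rightarrow(1)\Rightarrow(4)\Rightarrow(3)\Rightarrow(2)$, or whichever orientation makes the coding steps cleanest. The two substantive tasks are: first, relating arbitrary (possibly infinitely-branching) c.e.\ trees to completely branching c.e.\ \emph{binary} trees; and second, accounting for the $\BS^0_2$ summand, which is where set-vs-c.e.\ and the combinatorics of antichains interact with induction strength.

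\textbf{Step 1: $\TCAC$ implies $\BS^0_2$.} First I would show $(1)\Rightarrow\BS^0_2$ over $\RCA_0$. One builds, from a putative failure of $\BS^0_2$ (a $\Delta^0_2$-bounded function with unbounded range, or equivalently a cofinal $\Sigma^0_2$ map on an initial segment), a computable-in-the-parameters tree $T\subseteq\NN^{<\NN}$ that has no infinite antichain in the model (antichains are ``too thin'' to exist without the missing bounding principle) and no infinite path either, contradicting $\TCAC$. This is the standard pattern by which combinatorial principles about trees or partial orders recover $\BS^0_2$; the tree is arranged so that its only candidate infinite substructures would compute the collection failure. Then $(4)$ gives $\BS^0_2$ for free, so the hard content is showing $\TAC+\BS^0_2\Rightarrow\TCAC$ and conversely $\TCAC\Rightarrow\TAC$.

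\textbf{Step 2: the coding between arbitrary c.e.\ trees and completely branching c.e.\ binary trees.} This is the main obstacle. Given an infinite c.e.\ subtree $T\subseteq\NN^{<\NN}$, I want to produce an infinite c.e.\ completely branching binary tree $\widehat T\subseteq 2^{<\NN}$ such that an infinite path through $\widehat T$ yields (computably, with the enumeration as oracle) an infinite path through $T$, and an infinite antichain in $\widehat T$ yields an infinite antichain in $T$. The idea is to replace each node $\sigma\in T$ by a binary ``gadget'' encoding $\sigma$, where branching in $T$ becomes genuine $2$-branching and non-branching stretches are padded so that completely-branching-ness holds; one must use the c.e.\ enumeration to decide, in the limit, which extensions of $\sigma$ appear, turning each ``new child shows up'' event into a split node added later. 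The delicate point is that a priori $T$ need not be finitely branching, so the gadget must accommodate each node having infinitely many children — this is handled by a tree-of-binary-branchings trick (a right-branching spine along which successive children hang), so that an antichain in the binary tree still projects to an antichain, and an infinite path must commit to one child at each level and hence project to a path. Keeping the correspondence uniform, so that it gives \emph{both} an $\RCA_0$-proof \emph{and} a computable reduction (with the index of $\widehat T$ computed from that of $T$, and solutions decoded computably), is what makes this step the crux; the $\BS^0_2$ hypothesis is needed on the $\TAC\Rightarrow\TCAC$ side to verify that the decoding of an antichain is infinite (a bounded-search argument that, without $\BS^0_2$, could fail in a nonstandard model).

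\textbf{Step 3: assembling the cycle.} With Step 2 in hand, $\TAC$ applied to $\widehat T$ (when $T$ has no infinite path, which one can assume after first checking for paths — or rather, one feeds the whole problem through and decodes whichever kind of solution $\widehat T$'s antichain gives) yields, using $\BS^0_2$, an infinite antichain or path in $T$, giving $(4)\Rightarrow(2)$. The implications $(2)\Rightarrow(1)$ and $(2)\Rightarrow(3)$ are immediate from the definitions (every tree, resp.\ every binary tree, is c.e.). For $(1)\Rightarrow(4)$ we have $(1)\Rightarrow\BS^0_2$ from Step 1, and $(1)\Rightarrow\TAC$ because a completely branching c.e.\ binary tree is, after passing to its set of nodes via a path-or-antichain absorption, an instance to which $\TCAC$ applies — here one uses that in a completely branching binary tree an infinite path is easy to avoid unless the tree is essentially a single path, so the antichain alternative is the operative one. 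Finally $(3)\Rightarrow(2)$ is handled by running the Step 2 gadget in reverse, embedding an arbitrary c.e.\ tree into a c.e.\ binary tree (not necessarily completely branching) and noting the binary case already suffices; alternatively one closes the cycle $(3)\Rightarrow(4)\Rightarrow(2)$ by the same coding. I would double-check that every implication is witnessed by an honest computable reduction, not merely an $\RCA_0$ entailment, since the theorem claims both; the reductions are exactly the uniform versions of the codings above, with the $\BS^0_2$-dependent verifications replaced by the observation that in the computable-reduction setting we work over $\omega$ where $\BS^0_2$ holds outright.
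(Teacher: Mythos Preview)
Your Step 2 is where the real work lies, and the approach you sketch --- replacing each node of the c.e.\ tree $T$ by a binary gadget so that \emph{both} paths and antichains transfer between $T$ and $\widehat T$ --- is the natural first attempt, but it does not survive the c.e.\ hypothesis. The paper names the obstruction explicitly: because $T$ is only c.e., a split node $\rho\prec\mu$ may be enumerated \emph{after} $\mu$, so no tree morphism between the completely branching binary tree one builds and $T$ can be maintained, and paths in the binary tree will not project to paths in $T$. The paper's fix is to abandon path-transfer altogether. First, two degenerate cases are peeled off: if some node of $T$ has infinitely many children there is a computable antichain; if $T$ has only finitely many split nodes then $T$ is a union of finitely many computable paths, and $\BS^0_2$ (in the form $\BPi_1$) ensures one of them is infinite. \emph{This} is where $\BS^0_2$ is actually used --- not, as you suggest, in verifying that a decoded antichain is infinite. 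In the remaining case ($T$ finitely branching with infinitely many split triples) one greedily builds a completely branching binary tree $S$ together with an injection $f:S\to T$ that preserves \emph{only incomparability}: each time a split triple $(\mu,n_0,n_1)$ is discovered in $T$, some leaf $\tau$ of $S$ is split with $f(\tau\cdot i)=\mu\cdot n_i$, and disjoint ``territories'' $N_\sigma\subseteq T$ assigned to the nodes of $S$ are refined so that incomparable nodes of $S$ own pairwise incomparable regions of $T$. Applying $\TAC$ to $S$ yields an antichain, which $f$ carries to an antichain of $T$; no path-transfer is needed because $\TAC$ only outputs antichains.

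A secondary gap is your route $(1)\Rightarrow\TAC$: you cannot apply $\TCAC$ (statement (1), about \emph{computable} trees) directly to a c.e.\ completely branching binary tree, and your ``path-or-antichain absorption'' does not address this. The paper organizes the cycle differently: it proves $(1)\Rightarrow(2)$ by a time-stamp trick --- given c.e.\ $T$, form the computable tree whose nodes are sequences $\langle n_0,s_0\rangle\cdots\langle n_{k-1},s_{k-1}\rangle$ where $s_j$ is the stage at which $n_0\cdots n_j$ enters $T$; chains and antichains project down by forgetting the stamps --- and then proves $(3)\Rightarrow(4)$ directly: $\TAC$ follows from $(3)$ because an infinite path in a completely branching tree immediately yields an infinite antichain of off-path siblings, and $\BS^0_2$ follows via $\forall k\,\RT^1_k$ by encoding a $k$-coloring as a c.e.\ binary tree whose antichains have size at most $k$.
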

\begin{proof}
\((2) \implies (1)\) and~\((2) \implies (3)\) are immediate.
\((3) \implies (4)\) is \Cref{tcac-ce-binary-implies-tac} and \Cref{tcac-ce-bin-impl-bs2}.
\((4) \implies (2)\) is \Cref{tac-imples-tcac-ce}.
\((1) \implies (2)\) is \Cref{implyce}.
\end{proof}

We shall see in \Cref{sect:probabilistic-proof} that the use of~$\BSig_2$ is necessary in the equivalence above, as $\TAC$ does not imply $\BSig_2$ over~$\RCA_0$.

\begin{proposition}\label[proposition]{tcac-ce-binary-implies-tac}
	\(\RCA_0\vdash \TCAC[\ce binary]\implies\TAC\) and~\(\TAC\leqslant_c\TCAC[\ce binary]\).
\end{proposition}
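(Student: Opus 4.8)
The plan is to exploit the fact that an instance of $\TAC$ is \emph{verbatim} an instance of $\TCAC[\ce binary]$: given an infinite \ce completely branching tree $T\subseteq 2^{<\NN}$, apply $\TCAC[\ce binary]$ to $T$ itself and convert whichever solution is returned into an infinite antichain of $T$ by a $T\oplus Y$-computable procedure. Because the conversion is effective and uses no extra induction, it will establish $\RCA_0\vdash\TCAC[\ce binary]\implies\TAC$ and $\TAC\leqslant_c\TCAC[\ce binary]$ simultaneously; in the computable-reduction reading, $T$ serves as its own reduced instance, which is trivially $T$-computable.

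So let $Y$ be a solution of $T$ regarded as an instance of $\TCAC[\ce binary]$. If $Y$ is an infinite antichain of $T$, there is nothing to do, since it is already a solution of $T$ as an instance of $\TAC$. The substantial case is that $Y$ is an infinite path. Here I would use complete branching to extract a ``second child'' at each level. For $n\in\NN$, let $p_n$ be the length-$n$ node of (the branch determined by) $Y$, so that $\varepsilon=p_0\prec p_1\prec p_2\prec\cdots$; since $p_{n+1}\in T$ properly extends $p_n$, the node $p_n$ is not a leaf, hence is a split node, so $p_n\cdot 0,\ p_n\cdot 1\in T$. Exactly one of these equals $p_{n+1}$; let $q_{n+1}=p_n\cdot(1-p_{n+1}(n))$ be the other, and set $A=\{q_{n+1}:n\in\NN\}$.

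The key computation is to verify that $A$ is an infinite antichain of $T$. Membership $A\subseteq T$ is precisely the split-node observation above; $A$ is infinite because $|q_{n+1}|=n+1$; and for $1\le i<j$ we have $q_j\not\preccurlyeq q_i$ by length, while $q_i\preccurlyeq q_j$ would force $q_i=q_j\uh i=p_{j-1}\uh i=p_i$ (using $i\le j-1$), contradicting the fact that $q_i$ and $p_i$ differ at coordinate $i-1$. Finally, $n\mapsto q_{n+1}$ is computable from $Y$ by bounded search, so $A$ is $\Delta^0_1(Y)$; inside a model of $\RCA_0$ one forms $A$ by $\Delta^0_1$-comprehension, and for the reduction one reads $A$ off $Y$ directly. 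Thus $A$ is a solution of $T$ as an instance of $\TAC$.

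I do not anticipate a genuine obstacle here: the only point requiring care is the antichain verification just sketched, together with the minor issue that if ``infinite path'' is presented merely as an infinite chain rather than as a branch, one should first pass to the branch it determines, which in a binary tree is $Y$-computable. The remaining points --- that $T$ is an admissible instance of $\TCAC[\ce binary]$, that the split-node property yields $q_{n+1}\in T$, and that no bounding principle is needed --- are routine.
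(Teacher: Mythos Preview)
Your proposal is correct and follows essentially the same approach as the paper: apply $\TCAC[\ce binary]$ to $T$ itself, and if an infinite path is returned, use complete branching to take the sibling of each node along the path, yielding the antichain $\{\sigma\cdot(1-i)\mid \sigma\cdot i\prec P\}$. Your write-up is more explicit about the antichain verification and the computability of the conversion, but the idea is identical.
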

\begin{proof}
	Let~\(T\subseteq 2^{<\NN}\) be an infinite completely branching \ce tree. By the statement \(\TCAC[\ce binary]\), either there is an infinite antichain, or an infinite path~$P$. In the former case, we are done. In the latter case, using the fact that~$T$ is completely branching, the set~\(\{\sigma\cdot(1-i)\mid \sigma\cdot i\prec P\}\) is an infinite antichain of~\(T\).
	%If we obtain an infinite antichain, we are done. Otherwise we obtain an infinite path~\(P\). We can then use the fact that~\(T\) is completely branching to define the set~\(\{\sigma\cdot(1-i)\mid \sigma\cdot i\prec P\}\) which is an infinite antichain of~\(T\).
\end{proof}

\begin{proposition}\label[proposition]{tcac-ce-bin-impl-bs2}
	\(\RCA_0\vdash \TCAC[\ce binary]\implies\forall k, \RT^1_k\)\\and~\(\forall k, \RT^1_k\leqslant_c\TCAC[\ce binary]\).
\end{proposition}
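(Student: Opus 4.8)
The plan is to encode an instance of $\RT^1_k$ — a $k$-coloring $c\colon\NN\to k$ — as an infinite c.e.\ completely branching binary tree $T$ in such a way that any infinite path through $T$ must fail to exist (or be useless), while any infinite antichain yields an infinite homogeneous set for $c$. The standard trick is to make the tree ``tall but extremely thin'': we want $T$ to have no infinite path at all, so that $\TCAC[\ce binary]$ is forced to hand us an infinite antichain, and then we arrange the combinatorics so that an antichain through $T$ reads off a single color class of $c$. Recall from the statement that the pairing function is increasing in each argument and $\langle x_1,\dots,x_k\rangle \geq \max_j x_j$; this will let us compare the growth of branches.

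Concretely, I would build $T$ in stages: at stage $s$ we see the values $c(0),\dots,c(s-1)$. For each color $i<k$ maintain the (possibly finite) set $C_i = \{x : c(x)=i\}$ listed in increasing order as $x^i_0 < x^i_1 < \cdots$. We put into $T$ a node corresponding to each finite initial segment of each $C_i$, but we ``pad'' these nodes with enough $0$'s so that the node coding $\{x^i_0,\dots,x^i_{m}\}$ sits at a height determined by $x^i_m$ (e.g.\ at length $x^i_m+1$), with a single $1$ in a position that identifies the element $x^i_m$ and the color $i$; then take the downward closure to get a genuine binary tree, and add split nodes below each such node so that $T$ becomes completely branching (this is exactly the kind of manipulation used in \Cref{tcac-ce-binary-implies-tac} in reverse). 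Since each element $x$ is added to exactly one $C_i$, and every $C_i$ contributes a node at height $\sim x$ for each of its elements, $T$ is infinite; since only one of the $k$ colors can be ``infinitely often the current color'' without blocking... — more carefully, one arranges that an infinite path would have to follow cofinitely one color class, but the padding is chosen so that every infinite branch is actually a \emph{finite} object's closure plus a fixed tail, hence $T$ has no infinite path. Then $\TCAC[\ce binary]$ must return an infinite antichain $A \subseteq T$; by construction the ``marked'' node on each element of $A$ records a color, and by a pigeonhole (here one needs $\BS^0_1$, which is available in $\RCA_0$) infinitely many elements of $A$ record the same color $i$, and projecting those marks back to $\NN$ gives an infinite subset of $C_i$, i.e.\ an infinite homogeneous set for $c$. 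For the computable reduction $\forall k,\RT^1_k \leq_c \TCAC[\ce binary]$, the tree $T$ is uniformly $c$-c.e., and the decoding of an antichain into a homogeneous set is uniformly computable from the antichain and $c$.

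The main obstacle will be engineering $T$ so that it is simultaneously (a) infinite, (b) completely branching, (c) has \emph{no} infinite path, and (d) decodes antichains into monochromatic sets — conditions (a) and (c) are in tension (an infinite finitely-branching tree has an infinite path by König's lemma, so we genuinely need the branching, and we need the infinitely-many split nodes to lie ``off to the side'' of all the informative nodes so that any infinite path wanders into a region carrying no color information, or better, so that no infinite path exists because the split nodes below each informative node are only finitely extendible). A cleaner route avoiding (c) entirely: allow infinite paths but ensure every infinite path is \emph{also} decodable, or ensure every infinite path likewise picks out cofinitely one color class (so a path, too, yields a homogeneous set); then $\TCAC[\ce binary]$ gives us \emph{either} outcome and both decode. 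This is the variant I expect to actually write up, since it sidesteps the delicate path-killing construction. The role of $\BS^0_0/\BS^0_1$ in $\RCA_0$ is just the finite pigeonhole needed to pass from the $k$-valued marks on the antichain (or path) to a single infinite monochromatic sub-antichain, so no extra induction strength is consumed.
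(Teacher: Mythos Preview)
There is a genuine gap in your antichain decoding step. You write that from an infinite antichain $A\subseteq T$ whose nodes each ``record a color'', you will pass to an infinite sub-antichain all recording the \emph{same} color ``by a pigeonhole (here one needs $\BS^0_1$)''. But that pigeonhole step is precisely an instance of $\RT^1_k$: you have a $k$-coloring of the infinite set $A$ and you want an infinite monochromatic subset. The infinite pigeonhole for $k$ colors is not available from $\BS^0_1$; over $\RCA_0$ it is equivalent to $\BS^0_2$, which is exactly what the proposition is meant to yield. So the argument is circular as written, and the claimed computable reduction also fails for the same reason (you cannot computably thin an arbitrary infinite antichain down to a single color).

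The paper sidesteps this by going in the \emph{opposite} direction: it builds a c.e.\ binary tree with only $k$ maximal branches---one per color, namely $0^i\cdot 1^m$ for each $i<k$---so that every antichain has size at most $k$. Hence $\TCAC[\ce binary]$ is forced to return an infinite \emph{path}, and a path of the form $0^i\cdot 1^\omega$ directly names a single color $i$ with $\{x:f(x)=i\}$ infinite; the homogeneous set is then $f$-computable with no pigeonhole on the output. Your instinct to make the tree completely branching is what creates the problem: it is not required by the statement, and the extra split nodes are exactly what allow large antichains mixing several colors. If you drop that requirement and keep the tree thin (finitely many branches), your ``paths decode to a color class'' idea is already the whole proof.
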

\begin{proof}
	Let \(f:\NN\to k\) be a coloring, there are two possibilities. Either \(\exists i<k, \exists^\infty x, f(x)=i\), in which case there is an infinite computable \(f\)-homogeneous set. Otherwise \(\forall i<k, \forall^\infty x, f(x)\neq i\), in which case we define an infinite binary \(\tce\) tree \(T\), via a strictly increasing sequence of computable trees \((T_j)_{j\in\NN}\) defined by \(T_0 \coloneqq \{0^i\mid i<k\}\) and \(T_{s+1} \coloneqq T_s\cup\{0^{f(s)}\cdot 1^{m+1}\}\), where \(m\) is the number of \(x<s\) such that \(f(x)=f(s)\).
	
	Every antichain in \(T\) is of size at most \(k\), thus, by \(\TCAC[\ce binary]\), \(T\) must contain an infinite path, and so \(\exists i<k, \exists^\infty x, f(x)=i\), which is a contradiction.
\end{proof}

\begin{proposition}\label[proposition]{tac-imples-tcac-ce}
	\(\RCA_0\vdash\TAC+\BS^0_2\implies\TCAC[\tce]\) and
	
	\(\TCAC[\tce] \leqslant_c \TAC\)
\end{proposition}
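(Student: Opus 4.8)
The plan is to reduce an instance of \(\TCAC[\tce]\) to one of \(\TAC\): from an infinite \ce subtree \(T\subseteq\NN^{<\NN}\) we manufacture an infinite, \ce, completely branching binary tree \(\widehat T\), feed it to \(\TAC\) to obtain an infinite antichain of \(\widehat T\), and decode that antichain into an infinite path or antichain of \(T\). A preliminary case split disposes of a triviality: if some node of \(T\) has infinitely many immediate successors in \(T\), those successors already form an infinite antichain of \(T\); so assume henceforth that every node of \(T\) is finitely branching.

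To build \(\widehat T\) I would begin with the usual prefix-preserving embedding \(e\colon\NN^{<\NN}\to 2^{<\NN}\), \(e(\sigma_0\cdots\sigma_{n-1})=0^{\sigma_0}1\,0^{\sigma_1}1\cdots 0^{\sigma_{n-1}}1\), and let \(T'\) be the downward closure of \(\{e(\sigma):\sigma\in T\}\) in \(2^{<\NN}\): this is an infinite binary tree, \ce uniformly in an enumeration of \(T\), with \(e(\rho)\in T'\iff\rho\in T\). It is not completely branching, so the substantive step is to ``complete'' it: at each node of \(T'\) that is a leaf or has a single successor in \(T'\) one attaches padding forcing a split, taking care that this padding introduces no infinite antichain with no bearing on \(T\) — for instance by grafting, below the new branch, a copy of the subtree of \(T'\) hanging one immediate successor deeper in \(T\) (when there is one). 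The design requirement is a decoding invariant: every infinite antichain of the resulting tree \(\widehat T\) can be shown either to lie, up to that grafting, inside the embedded copy of \(T\), or to meet infinitely many of the graft points — in the first case it reads off an infinite antichain of \(T\) (or, if it is ``thin'', an infinite path), and in the second case the graft points involved are pairwise incomparable and decode either to an infinite antichain of \(T\) or to an increasing sequence of nodes of \(T\), hence an infinite path.

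Given the infinite antichain \(A\subseteq\widehat T\) returned by \(\TAC\), the decoding rests on two properties of \(e\): it is prefix-preserving, so a comparability between \(e(\sigma)\) and \(e(\tau)\) forces one between \(\sigma\) and \(\tau\) — whence two incomparable members of \(A\) inside the embedded copy yield, after a bounded ``overshoot'' correction on the unary blocks \(0^k\), two incomparable nodes of \(T\); and, since every node of \(T\) is finitely branching, any infinite path of \(T'\) carries infinitely many \(1\)'s and so decodes to an infinite path of \(T\). The work that \(\BS^0_2\) does is to make the decoded set infinite: finitely many nodes of \(\widehat T\) sit above any fixed node of \(T\), and a \(\BS^0_2\)-bounding argument — propagating ``finitely many successors'' up the levels of \(T\), in the spirit of \Cref{tcac-ce-bin-impl-bs2} — shows an infinite \(A\) must have infinitely many distinct underlying nodes. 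This is the sole use of \(\BS^0_2\) (matching the remark after \Cref{big-equiv} that \(\TAC\nvdash\BS^0_2\)), and it is altogether unnecessary for the computable reduction \(\TCAC[\tce]\leqslant_c\TAC\), where \(\widehat T\) and the entire decoding are computed from \(A\) together with an enumeration of \(T\).

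The step I expect to be the main obstacle is precisely the joint design of the completion and the decoding: the padding forced on \(T'\) by complete branching must be laid out so that \(\widehat T\) has \emph{no} infinite antichain escaping the decoding — in particular the unavoidable dead ends must be arranged so that an antichain collecting infinitely many of them still produces an infinite antichain or path of \(T\). Once the construction is right, checking that \(\widehat T\) is \ce and completely branching, handling the overshoot bookkeeping, and turning the pigeonhole steps into a formal \(\BS^0_2\) argument in \(\RCA_0\) are routine.
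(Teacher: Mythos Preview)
Your plan diverges from the paper's at exactly the point you flag as the main obstacle, and that step is a genuine gap: you never actually give a completion of $T'$ together with a decoding, only a wish-list of properties they should satisfy. The grafting suggestion is not a construction --- the grafted subtrees inherit all the non-branching nodes of the originals, so a single pass does not complete the tree, and you provide no invariant that would survive iterating the process or control what the infinite antichains of the limit look like. Your placement of $\BS^0_2$ is likewise speculative, tied as it is to a decoding map that is never defined.

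The paper sidesteps the embed-then-complete paradigm entirely. After disposing of two degenerate cases (some node of $T$ has infinitely many immediate children; $T$ has only finitely many split nodes --- this second case is where $\BS^0_2$, via $\BPi_1$, is actually used, to show that one of the finitely many maximal chains is infinite), it builds the completely branching binary tree $S$ greedily from the split structure of $T$. One searches $T$ for split triples $(\mu, n_0, n_1)$ with $\mu,\mu\cdot n_0,\mu\cdot n_1\in T$; each time one is found, some leaf $\tau$ of the current $S$ is extended to $\tau\cdot 0,\tau\cdot 1$ and one sets $f(\tau\cdot i)=\mu\cdot n_i$. The resulting $f:S\to T$ is \emph{not} a tree embedding (split triples surface out of prefix order in the enumeration of $T$), but a bookkeeping device --- a family of pairwise-incomparable regions $N_\sigma\subseteq T$ indexed by $\sigma\in S$, refined at each step --- guarantees the single property needed: $\sigma\bot\nu$ in $S$ implies $f(\sigma)\bot f(\nu)$ in $T$. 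Any infinite antichain of $S$ returned by $\TAC$ then pushes forward under $f$ to an infinite antichain of $T$, with no path alternative to handle in the main case.
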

\begin{proof}
	Let~\(T\subseteq\NN^{<\NN}\) be an infinite \ce tree. We can deal with two cases directly: when~\(T\) has a node with infinitely many immediate children, as they contain a computable infinite antichain; and when~\(T\) has finitely many split nodes, in which case it has finitely many paths \(P_0, \ldots, P_{k-1}\), which are all computable. Moreover one of them is infinite, as otherwise they would all be finite, \ie \(\forall i<k, \exists s, \forall n>s, n\notin P_i\), and thus their union would be finite since \(\BPi_1\) (which is equivalent to \(\BS^0_2\)) yields \(\exists b, \forall i<k, \exists s<b, \forall n>s, n\notin P_i\). But since \(T=\bigcup_{i<k} P_i\), this would lead to a contradiction.

	A \textbf{split triple} of~$T$ is a triple~\((\mu, n_0, n_1)\in T\times\NN\times\NN\) such that $\mu, \mu\cdot n_0, \mu\cdot n_1 \in T$. In particular, $\mu$ is a split node in~$T$.

%	
%	TODO, nommer, $\mu, \rho$ non-binary strings, elements of $T$, and $\sigma, \tau$ elements of~$S$.
%	In the other cases we are going to build a~\(T\)-computable instance~\(S\) of~\(\TAC\), whose solution will give us an antichain of~\(T\).
%		$f(\epsilon) = \epsilon$,
%	$\dom f = S$.
%	quand on trouve un split triple $(\mu, n, m)$, trouver un $\sigma \in S$, et $f(\sigma 0) = \mu n$ and $f(\sigma 1) = \mu m$.
%	
%	
%	
%	Let~$\hat{T}$ be an infinite \ce set of split triples of~$T$ such that for every~$(\sigma, a, b), (\tau, c, d) \in \hat{T}$, $\sigma \neq \tau$. In other words, the root of each split triple is unique. The set~$\hat{T}$ is infinite since there are infinitely many split nodes, and since the tree is finitely branching.
%	
	
	\textbf{Idea}. The general idea is to build greedily a completely branching \ce tree~$S$ by looking for split triples in~$T$, and mapping them to split nodes in~$S$. This correspondence is witnessed by an injective function $f : S \to T$ that will be constructed alongside \(S\).
	The main difficulty is that, since~\(T\) is c.e., a split node~\(\rho\) can be discovered after~\(\mu\) even though~\(\rho\prec\mu\), which means that we will not be able to ensure that~$S$ can be embedded in~$T$. In particular,~$f$ will not be a tree morphism.
	However, the only property that needs to be ensured is that for every infinite antichain~\(A\) of~\(S\), the set~\(f(A)\) will be an infinite antichain of~\(T\). To guarantee this, the function~\(f\) needs to verify
	\begin{equation}\label{eqn:ac}
		\forall \sigma, \nu\in S, \sigma\bot\nu \implies f(\sigma)\bot f(\nu)\tag{\textasteriskcentered}
	\end{equation}
	During the construction, we are going to associate to each node~$\sigma \in S$ a set~$N_\sigma \subseteq T$, which might decrease in size over time ($N^0_\sigma \supseteq N^1_\sigma \supseteq \dots$), with the property that at every step~$s$, the elements of~$\{ N^s_\sigma : \sigma \in S\}$ are pairwise disjoint, and their union contains cofinitely many elements of~$T$. 
	The role of~$N_\sigma$ is to indicate that ``if a split triple is found in~\(N_{\sigma}\), then the nodes in~\(S\), associated via~\(f\), must be above~\(\sigma\)''. 
%	
%	
%	To guarantee that an antichain of~\(S\) will be associated to an antichain of~\(T\), the function~\(f\) needs to verify~\(\forall \sigma, \tau\in\dom f, \sigma\centernot\bot\tau\implies f(\sigma)\centernot\bot f(\tau)\).  potentially leading to a problem if there is no room under~\(f(\sigma)\). To solve this, see that making~\(f(\tau)\) a descendant of~\(f(\sigma)\) does not cause any problem.
%	
%	Once we have an antichain~\(A\) of~\(S\), the set~\(f^{-1}(A)\) will directly be an antichain of~\(T\).
	
	\textbf{Construction.} 
	Initially, $N^0_\varepsilon \coloneqq  T$, $S \coloneqq  \{\varepsilon\}$ and $f(\varepsilon) \coloneqq  \varepsilon$.
	
	At step~$s$, suppose we have defined a finite, completely branching binary tree~$S \subseteq 2^{<\NN}$,
		and for every~$\sigma \in S$, a set~$N^s_\sigma \subseteq T$ such that $\{N^s_\sigma : \sigma \in S\}$ forms a partition of \(T\) minus finitely many elements. Moreover, assume we have defined a mapping~$f : S \to T$.
	
	Search for a split triple $(\mu, n_0, n_1)$ in $\bigcup_{\sigma \in S} N^s_\sigma$. Let~$\sigma \in S$ be such that $\mu \in N^s_\sigma$. Let~$\tau$ be any leaf of~$S$ such that $\tau\succeq\sigma$ (for example pick the left-most successor of \(\sigma\)).
	Add~$\tau \cdot 0$ and $\tau \cdot 1$ to $S$, and set~$f(\tau \cdot i) = \mu \cdot n_i$ for each~$i<2$. Note that~$S$ is still completely branching.
	
	Then, split~$N^s_\sigma$ into three disjoint subsets~$N^{s+1}_\sigma$, $N^{s+1}_{\tau \cdot 0}$, $N^{s+1}_{\tau \cdot 1}$ as follows:
	(for~\(i<2\)) \(N^{s+1}_{\tau \cdot i}\coloneqq \{\rho\in N^s_\sigma \mid \rho\succcurlyeq \mu \cdot n_i\}\) and~\(N^{s+1}_\sigma\coloneqq \{\rho\in N^s_\tau \mid \forall i<2, \rho\bot\mu\cdot n_i\}\).
	Note that these sets do not form a partition of~\(N^s_\sigma\) as we missed the nodes in~\(\{\rho\in N^s_\sigma \mid \rho \preccurlyeq \mu\}\), fortunately there are only finitely many of them.
	Lastly, set~$N^{s+1}_\nu\coloneqq N^s_\nu$ for every~$\nu\in S - \{\sigma, \tau \cdot 0, \tau \cdot 1 \}$.
	
	\textbf{Verification.}
	First, let us prove that at any step $s$, $\bigcup_{\sigma \in S} N^s_\sigma$ contains infinitely many split triples, which ensures that the search always terminates. Note that $T-\bigcup_{\sigma\in S} N^s_\sigma$ is a \ce subset of $\bigcup_{\sigma \in S} \{\rho\mid \rho \preceq \sigma \}$, hence exists by bounded $\Sigma_1$ comprehension, which follows from $\RCA_0$. Moreover, by assumption, $T$ is a finitely branching \ce tree, which means that every~$\rho \in T-\bigcup_{\sigma\in S} N^s_\sigma$ belongs to a bounded number of split triples of~$T$. By $\BSig_1$ (which follows from $\RCA_0$), the number of split triples in~$T$ which involve a node from $T-\bigcup_{\sigma\in S} N^s_\sigma$ is bounded. Since by assumption, $T$ contains infinitely many split triples, $T-\bigcup_{\sigma\in S} N^s_\sigma$  must contain infinitely many of them.	
	
	Second, we prove by induction on \(s\) that \(\forall s\in \NN, \forall \sigma\neq\nu\in S, N_\sigma^s\bot N_\nu^s\), \ie \(\forall \mu\in N_\sigma^s, \forall \rho\in N_\nu^s, \mu\bot\rho\). At step 0, the assertion is trivially verified.
	At step \(s\), suppose we found the split triple \((\mu, n_0, n_1)\) in the set \(N_\sigma^s\), and that \(\forall i<2, f(\tau\cdot i)=\mu\cdot n_i\) where \(\tau\succcurlyeq\sigma\). Since \(\mu\) was found in \(N_\sigma^s\), the latter is split into \(N_{\tau\cdot 0}^{s+1}\), \(N_{\tau\cdot 1}^{s+1}\) and \(N_\sigma^{s+1}\), the other sets remain identical. By construction, and because they are all subsets of \(N_\sigma^s\), the assertion holds.
	
	We now prove \eqref{eqn:ac}, consider \(\sigma, \nu\in S\) such that \(\sigma\bot\nu\).
	WLOG suppose \(\nu\) was added to \(S\) sooner than \(\sigma\), more precisely \(f(\nu)\) appeared (as child in a split triple) at step \(s\) in some set \(N_{-}^s\), so \(N_\nu^{s+1}\) contains \(f(\nu)\) by construction. Since \(\sigma\) was added to \(S\) after \(\nu\), there exists \(\rho\in S\) such that \(f(\sigma)\in N_\rho^{s+1}\). By contradiction \(\rho\neq\sigma\) holds, as otherwise \(f(\sigma)\in N_\tau^{s+1}\), and so \(\sigma\) would extend \(\tau\) by construction of \(S\). Thus by using the previous assertion, we deduce \(f(\sigma)\bot f(\nu)\).
\end{proof}

\begin{proposition}\label[proposition]{implyce}
	\(\RCA_0\vdash\TCAC\implies\TCAC[\tce]\) and
	
	\(\TCAC[\tce]\leqslant_c\TCAC\).
\end{proposition}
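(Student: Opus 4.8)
The plan is to reduce an arbitrary infinite c.e.\ tree $T\subseteq\NN^{<\NN}$ to an \emph{actual} infinite subtree $\widehat T\subseteq\NN^{<\NN}$ that is order-isomorphic to $T$, apply $\TCAC$ to $\widehat T$, and transport the resulting path or antichain back across the isomorphism. First I fix a computable enumeration $\rho_0,\rho_1,\dots$ of $T$, without repetitions, respecting the tree order in the sense that $\rho_i\prec\rho_j$ implies $i<j$ (arrange this by delaying a string until all its prefixes have appeared, and listing simultaneously-appearing strings by increasing length; in $\RCA_0$ this enumeration is $\Delta^0_1$ in the parameters defining $T$). Writing $t(\rho_i)=i$, I attach to each $\nu\in T$ the string $f(\nu):=(t(\nu\uh 1),t(\nu\uh 2),\dots,t(\nu\uh|\nu|))$ of length $|\nu|$, recording the stage at which each proper-or-not initial segment of $\nu$ entered $T$, and I set
\[
  \widehat T := \bigl\{\, \sigma\in\NN^{<\NN} : \rho_{\sigma(0)}\prec\rho_{\sigma(1)}\prec\dots\prec\rho_{\sigma(|\sigma|-1)}\ \text{and}\ |\rho_{\sigma(\ell)}|=\ell+1\ \text{for all}\ \ell<|\sigma| \,\bigr\}.
\]

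The decisive design choice is to encode $\nu$ by the \emph{history of its initial segment}, a single fixed finite string, rather than by the stages at which $\nu$ itself is present (which would create an infinite ray above every node). With this choice one checks that $\widehat T$ is downward closed, that $\ran(f)=\widehat T$ — given $\sigma\in\widehat T$, the node $\rho_{\sigma(|\sigma|-1)}$ is the unique $\nu\in T$ with $f(\nu)=\sigma$ — and that $f$ is an isomorphism of the partial orders $(T,\prec)$ and $(\widehat T,\prec)$: if $\nu\prec\nu'$ then $f(\nu)$ is a prefix of $f(\nu')$, and if $\nu\bot\nu'$ with longest common prefix $\mu$, then $f(\nu)$ and $f(\nu')$ agree along $f(\mu)$ and then disagree, since $\nu,\nu'$ entered $T$ through distinct children of $\mu$ at distinct stages. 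Moreover $\widehat T$ is a legitimate instance of $\TCAC$: it is $\Delta^0_1$ (and $I$-computable in the reduction), because membership of $\sigma$ is decided by running the enumeration of $T$ for as many stages as the code of $\sigma$ (which bounds every $\sigma(\ell)$) and then performing the finitely many comparisons in the definition — this never requires predicting whether a node will later acquire children; and it is infinite, since $T$ is infinite and $f$ is injective, so from $n+1$ distinct elements of $T$ one computes $n+1$ distinct elements of $\widehat T$ by a terminating search.

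Now apply $\TCAC$ to $\widehat T$. If it returns an infinite path $\{\sigma_k:k\in\NN\}$ of $\widehat T$ (so $|\sigma_k|=k$ and $\sigma_0\prec\sigma_1\prec\dots$), then the nodes $\rho_{\sigma_k(k-1)}$ for $k\geqslant 1$ have lengths $1,2,3,\dots$ and form an increasing chain, i.e.\ an infinite path of $T$; if it returns an infinite antichain $A\subseteq\widehat T$, then $\{f^{-1}(\sigma):\sigma\in A\}$, with $f^{-1}(\sigma)=\rho_{\sigma(|\sigma|-1)}$, is an infinite antichain of $T$, since $f^{-1}$ preserves incomparability. In both cases the solution of $T$ is computed from the solution of $\widehat T$ together with (the enumeration of) $T$, which proves $\RCA_0\vdash\TCAC\implies\TCAC[\tce]$ and $\TCAC[\tce]\leqslant_c\TCAC$.

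The step I expect to be the real obstacle is precisely the choice of encoding. The naïve reduction — let the level-$s$ nodes of $\widehat T$ be the members of $T_s$, each joined to its incarnation at level $s-1$ — keeps $\widehat T$ computable but creates an infinite spurious path above every $\mu\in T$ (once enumerated, $\mu$ stays forever), and from such a path no solution of $T$ can be extracted; conversely, truncating these rays at the moment $\mu$ ``turns out to be a leaf'' is impossible because that is not decidable. Recording the history of the initial segment is what simultaneously keeps $\widehat T$ computable and makes it an exact isomorphic copy of $T$, so that $\widehat T$ carries no spurious infinite object; once this encoding is in place the remaining verifications — downward closure, that $f$ is an order-isomorphism onto $\widehat T$, and that $\widehat T$ is infinite in the sense of $\RCA_0$ — are routine.
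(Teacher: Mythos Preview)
Your proof is correct and follows essentially the same approach as the paper: the paper defines the computable tree $S$ by $\langle n_0,s_0\rangle\cdots\langle n_{k-1},s_{k-1}\rangle\in S$ iff each $s_j$ is the least stage with $n_0\cdots n_j\in T[s_j]$, and recovers a chain/antichain of $T$ by projecting onto the first components. Your encoding records only the enumeration indices $t(\nu\uh j)$ (dropping the redundant $n_j$, which is recoverable from the index), and you frame the construction as an order-isomorphism, but the underlying idea---tagging each level of a node by the time its corresponding initial segment first appeared---is the same.
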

\begin{proof}
	Let~\(T\subseteq\NN^{<\NN}\) be a \ce tree. We define the computable tree~\(S\subseteq\NN^{<\NN}\) by
	\(\langle n_0, s_0\rangle\cdot\ldots\cdot\langle n_{k-1}, s_{k-1}\rangle\in S\) if and only if for all~\(j<k\), \(s_j\) is the smallest integer such that~\(n_0\cdot\ldots\cdot n_j\in T[s_j]\), where $T[s_j]$ is the approximation of~$T$ at stage~$s_j$.
	
	By~\(\TCAC\), there is an infinite chain (resp. antichain) in~\(S\), and by forgetting the second component of each string, we obtain an infinite chain (resp. antichain) in~\(T\).
\end{proof}

Before finishing this section, we introduce a set version of the principle~$\TAC$, which is more convenient to manipulate than $\TAC$. Indeed, when working with $\TAC$, the downward-closure of the tree is not relevant, and we naturally end up taking an infinite computable subset of the tree rather than working with the c.e.\ tree. This motivates the following definitions.

\begin{definition}
A set~$X \subseteq 2^{<\NN}$ is \textbf{completely branching} if for every~$\sigma \in 2^{<\NN}$, $\sigma\cdot 0 \in X$ iff $\sigma\cdot 1 \in X$. 
\end{definition}

Note that the above definition is compatible with the notion of completely branching tree.

\begin{definition}[$\SAC$, set antichain theorem]
Every infinite completely branching set~$X \subseteq 2^{<\NN}$ has an infinite antichain.
\end{definition}

The set antichain theorem is equivalent to the tree antichain theorem, as shows the following lemma.

\begin{lemma}\label[lemma]{sac-equiv-tac}
$\RCA_0 \vdash \SAC \iff \TAC$.
\end{lemma}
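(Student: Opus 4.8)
The statement $\RCA_0 \vdash \SAC \iff \TAC$ asks us to show that the set antichain theorem and the tree antichain theorem prove each other over $\RCA_0$; I would also expect the proof to note the corresponding computable reductions, though the stated lemma only claims the $\RCA_0$-equivalence. The natural strategy is to handle the two directions separately, with the $\SAC \implies \TAC$ direction being essentially a restriction (every completely branching c.e.\ tree gives rise to a completely branching set by passing to a suitable infinite computable subset), and the $\TAC \implies \SAC$ direction being a closure operation (take a completely branching set and close it downward to obtain a completely branching tree, being careful that the resulting tree is c.e.\ and that antichains transfer back).

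\emph{Direction $\TAC \implies \SAC$.} Let $X \subseteq 2^{<\NN}$ be an infinite completely branching set. Form $T$ to be the downward closure of $X$ under $\prec$, which is a tree and is computable from $X$ (hence in particular c.e.). First I would check that $T$ is completely branching in the tree sense: if $\sigma \in T$ is not a leaf, then some $\tau \succ \sigma$ lies in $X$, so some immediate successor $\sigma \cdot i$ of $\sigma$ lies in $T$; by the set-definition of completely branching applied to $X$ one then has to argue $\sigma \cdot (1-i) \in T$ as well — this is the one slightly delicate point, since completely branching for the \emph{set} $X$ is only about immediate successors in $X$, not about arbitrary prefixes, so I would instead prove directly that the downward closure of a completely branching set is a completely branching tree by induction on the length of the witnessing extension. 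Next, $T$ is infinite since $X \subseteq T$ is. Apply $\TAC$ to get an infinite antichain $A \subseteq T$. The remaining step is to push $A$ down to an antichain of $X$: replace each $\sigma \in A$ by an element of $X$ extending it (which exists since $T$ is the downward closure of $X$), and observe this can be done so that the images remain pairwise incomparable — e.g.\ by a $\Delta^0_1$ (in $A \oplus X$) selection of the leftmost element of $X$ above each node, together with a thinning argument to guarantee the images stay in an antichain. This yields an infinite antichain of $X$.

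\emph{Direction $\SAC \implies \TAC$.} Let $T \subseteq 2^{<\NN}$ be an infinite completely branching c.e.\ tree. Here the point of the preceding paragraph of the paper is that downward-closure is irrelevant, so I would extract from $T$ an infinite computable completely branching \emph{set} $X$. Concretely, enumerate $T$ and greedily build $X$: whenever a new split node $\mu$ of $T$ is confirmed (both $\mu \cdot 0$ and $\mu \cdot 1$ appear in $T$), add both $\mu \cdot 0$ and $\mu \cdot 1$ to $X$, being careful to also add enough prefixes so that the ``$\sigma \cdot 0 \in X$ iff $\sigma \cdot 1 \in X$'' condition is maintained; since $T$ is completely branching and infinite it has infinitely many split nodes, so $X$ is infinite. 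One must argue $X$ is actually computable (not merely c.e.), which works because membership of a string of length $n$ in $X$ is decided by waiting for the finitely many relevant split nodes, using $\BSig_1$ or a direct bound; if a fully computable $X$ cannot be arranged one falls back to a c.e.\ $X$ and invokes $\SAC$ in its c.e.\ formulation, but the cleaner route is computable. Apply $\SAC$ to $X$ to get an infinite antichain, which is automatically an infinite antichain of $T$ since $X \subseteq T$.

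\emph{Main obstacle.} The hard part is the bookkeeping in each direction ensuring the transferred object is genuinely an antichain rather than just a set of pairwise-incomparable-in-the-limit nodes, and in the $\SAC \implies \TAC$ direction ensuring the extracted set $X$ is computable and honestly completely branching in the set sense (both immediate successors present or both absent, for \emph{every} $\sigma$); the c.e.\ nature of $T$ means split nodes can be discovered out of order, so one has to commit to adding prefixes of a confirmed split node into $X$ at once, and verify that this does not violate the completely-branching-set condition at shorter lengths. Verifying the downward-closure of a completely branching set is a completely branching tree (rather than just a tree) is the other place where a short induction is genuinely needed.
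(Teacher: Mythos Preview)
Your $\SAC \implies \TAC$ direction is fine and matches the paper: extract from the infinite completely branching \ce tree an infinite computable completely branching subset (e.g.\ by greedily picking sibling pairs of increasing length as they appear in the enumeration), and apply $\SAC$; any antichain of the subset is an antichain of~$T$.

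The $\TAC \implies \SAC$ direction, however, has a genuine gap. The claim that the downward closure of a completely branching set is a completely branching tree is \emph{false}, and no induction will rescue it. Take $X = \{000, 001\}$: this is completely branching as a set (the only $\sigma$ with a child in $X$ is $00$, and both children are present), but its downward closure $\{\varepsilon, 0, 00, 000, 001\}$ has $\varepsilon$ as a non-leaf with only one child. More generally, a completely branching set may live entirely above a single long string, and then the spine below it is a path in the closure with no branching at all. So your proposed instance of $\TAC$ need not be an instance of $\TAC$. (Incidentally, the step you flagged as delicate---pushing an antichain of the closure up to an antichain of $X$---is actually unproblematic: extensions of incomparable strings are automatically incomparable, so no thinning is needed. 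The problem is earlier.)

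The paper takes a genuinely different route for this direction. Rather than closing $X$ downward, it builds a computable tree $T \subseteq \NN^{<\NN}$ whose nodes code maximal finite $\prec$-chains in $X$ (each coordinate of $\sigma \in T$ is a code for an element of $X$, consecutive coordinates are $\prec$-consecutive in $X$). Chains and antichains in $T$ then decode to chains and antichains in $X$. This $T$ is not binary and not completely branching, so $\TAC$ does not apply directly; instead the paper invokes the construction from the proof of $\TAC + \BSig_2 \implies \TCAC[\tce]$, observing that because $X$ is completely branching, $T$ has infinitely many split triples and the $\BSig_2$ hypothesis of that argument is not actually used. This yields an infinite chain or antichain in $T$, hence in $X$, and in the chain case one flips last bits to get an antichain. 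The moral is that $\TAC \implies \SAC$ is not the triviality it first appears to be: it routes through the machinery of \Cref{tac-imples-tcac-ce}.
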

\begin{proof}
$\SAC \implies \TAC$. Let~$T \subseteq 2^{<\NN}$ be an infinite, completely branching \ce tree. Let~$S \subseteq T$ be an infinite computable, completely branching set. By $\SAC$, there is an infinite antichain~$A \subseteq X$. In particular,~$A$ is an antichain for~$T$.

$\SAC \impliedby \TAC$. Let~$S \subseteq 2^{<\NN}$ be an infinite computable, completely branching set.
One can define an infinite computable tree~$T \subseteq \NN^{<\NN}$ by letting~$\sigma \in T$ iff for every $n < |\sigma|$, $\sigma(n)$ codes for a binary string $\tau_n \in S$, such that for every~$n < |\sigma|-1$, $\tau_{n} \prec \tau_{n+1}$, and there is no string in~\(S\) strictly between $\tau_n$ and $\tau_{n+1}$. The tree $T$ is such that every chain in $T$ codes for a chain in~$S$, and every antichain in $T$ codes for an antichain in~$S$.
We can see~$T$ as an instance of $\TCAC$.
Moreover, since $S$ is completely branching, then $T$ has infinitely many split triples, so the proof of \Cref{tac-imples-tcac-ce} applied to this instance~$T$ of $\TCAC$ does not use $\BSig_2$. Thus there is either an infinite chain for~$T$, or an infinite antichain for~$S$. With the appropriate decoding, we obtain an infinite antichain for~$S$.
\end{proof}

\section{Probabilistic proofs of~\(\SAC\)}\label[section]{sect:probabilistic-proof}
The restriction of $\CAC$ to trees yields a strictly weaker statement from the viewpoint the arithmetical bounds in the arithmetic hierarchy. Indeed, by Herrmann~\cite[Theorem 3.1]{herrmann2001infinite}, there is a computable partial order with no $\Delta^0_2$ infinite chain or antichain, while by Binns et al.\ in \cite[Theorem 6.2]{binns2014}, every infinite computable tree must have either an infinite computable chain or an infinite~\(\Pi_1^0\) antichain. 
In this section, we go one step further in the study of the weakness of~\(\TCAC\) by proving that~$\SAC$ admits probabilistic solutions. Before this, we prove two technical lemmas.

\begin{lemma}[$\RCA_0$]\label[lemma]{lem:completely-branching-finite-antichain-exists}
Let~$S \subseteq 2^{<\NN}$ be an infinite completely branching set.
Then for every~$n$, there exists an antichain of size~$n$.
\end{lemma}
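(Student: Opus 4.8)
The naive induction on $n$ does not seem to work: an antichain of size $n$ inside $S$ need not extend to one of size $n+1$, and the natural remedy — carrying along in the induction hypothesis a node of $S$ below which $S$ is infinite — makes the statement $\Pi^0_2$ rather than $\Sigma^0_1$, hence beyond the induction available in $\RCA_0$. The plan is instead to reduce the statement, for each fixed $n$, to the \emph{finite} Ramsey theorem (which is provable in $\RCA_0$) together with complete branching. We may assume $n\geq 2$, the cases $n\leq 1$ being trivial since $S\neq\varnothing$. Let $N$ be large enough that every $2$-coloring of the pairs from an $N$-element set admits a homogeneous subset of size $n$. As $S$ is infinite I may pick pairwise distinct $x_1,\dots,x_N\in S$ and color $\{i,j\}$ according to whether $x_i$ and $x_j$ are $\prec$-comparable or not; a homogeneous set of size $n$ then yields $n$ elements of $S$ that are either pairwise incomparable or pairwise comparable.

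In the first case these $n$ elements already form an antichain contained in $S$ and we are done. In the second case, list the chain in increasing order as $\rho_1\prec\rho_2\prec\dots\prec\rho_n$; then $|\rho_j|\geq j-1\geq 1$ for every $j\geq 2$, so each such $\rho_j$ can be written $\rho_j=\tau_j\cdot b_j$ with $b_j=\rho_j(|\rho_j|-1)<2$, where $\tau_j$ is merely a string (not necessarily in $S$). Since $S$ is completely branching and $\tau_j\cdot b_j=\rho_j\in S$, also $s_j\coloneqq\tau_j\cdot(1-b_j)\in S$. I claim that $\{s_2,\dots,s_n,\rho_n\}$ is an antichain of size $n$ inside $S$, which finishes the proof.

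The verification of this claim is the one step that needs care, but it is purely a matter of tracking at which position two strings first disagree. For $2\leq j<k\leq n$ each of $\rho_n$ and $s_k$ extends $\tau_j\cdot b_j$: for $\rho_n$ this is the chain condition $\rho_j\preccurlyeq\rho_n$, and for $s_k$ it is because $|\tau_k|=|\rho_k|-1\geq|\rho_j|=|\tau_j|+1$, so the prefix $\tau_k$ of $\rho_k$ already extends $\tau_j\cdot b_j$, hence so does $s_k\succ\tau_k$. On the other hand $s_j=\tau_j\cdot(1-b_j)$ disagrees with $\tau_j\cdot b_j$ at position $|\tau_j|$, a position strictly below the length of $s_j$ and of each of $s_{j+1},\dots,s_n,\rho_n$; therefore $s_j$ is incomparable with each of $s_{j+1},\dots,s_n,\rho_n$, so the $n$ listed strings are pairwise incomparable (in particular pairwise distinct). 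Everything here is finitary: the argument uses only the finite Ramsey theorem, never an infinite homogeneous set and never an instance of $\WKL$, so it goes through in $\RCA_0$. The main obstacle I anticipate in a full write-up is simply getting this position-bookkeeping in the chain case exactly right.
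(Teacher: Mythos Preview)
Your proof is correct and follows essentially the same approach as the paper: apply finite Ramsey to $n$ elements of~$S$, and in the chain case flip last bits to obtain an antichain. The only difference is cosmetic---the paper flips the last bit of \emph{every} element of the chain $\{\rho_1,\dots,\rho_n\}$ (tacitly avoiding $\varepsilon$ when choosing the $n$ elements), whereas you flip $\rho_2,\dots,\rho_n$ and keep $\rho_n$, which has the minor virtue of making the case $\rho_1=\varepsilon$ unproblematic.
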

\begin{proof}
By finite Ramsey's theorem for pairs and 2 colors (which holds in~$\RCA_0$),
there exists some~$p \in \NN$ such that for every 2-coloring of~$[p]^2$,
there exists a homogeneous set of size~$n$.
Since~$S$ is infinite, there exists a subset $P \subseteq S$ of size~$p$.
By choice of~$p$, there exists a subset~$Q \subseteq P$ of size~$n$ such that
either~$Q$ is a chain, or an antichain. In the latter case, we are done.
In the former case, since $S$ is completely branching, the set~$\{\hat{\sigma}\mid \sigma \in Q \} \subseteq S$ is an antichain, where~$\hat{\sigma}$ is the string obtained from~$\sigma$ by flipping its last bit.
\end{proof}

\begin{lemma}[$\RCA_0$]\label[lemma]{lem:completely-branching-at-most-one-wrong}
Let~$S \subseteq 2^{<\NN}$ be an infinite completely branching set.
Then for every antichain~$A \subseteq S$, for all but at most one~$\sigma \in A$,
the set~$S_\sigma \coloneqq \{ \tau \in S\mid \sigma \bot \tau \mbox{ and } |\tau| > |\sigma| \}$ is infinite and completely branching.
\end{lemma}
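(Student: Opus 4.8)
The plan is to split the conclusion into its two components and to notice that they behave very differently. I claim that $S_\sigma$ is completely branching for \emph{every} $\sigma \in A$ (indeed for every $\sigma \in 2^{<\NN}$), so the antichain hypothesis on $A$ plays no role there; it is only the infiniteness of $S_\sigma$ that can fail, and I will show it fails for at most one $\sigma \in A$.

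For the complete-branching part, I would fix $\sigma$ and an arbitrary $\rho \in 2^{<\NN}$ and check $\rho\cdot 0 \in S_\sigma \iff \rho\cdot 1 \in S_\sigma$. If $|\sigma| \geq |\rho|+1$, then $|\rho\cdot i| = |\rho|+1 \leq |\sigma|$, so the length requirement in the definition of $S_\sigma$ fails for both and there is nothing to prove; so assume $|\sigma| \leq |\rho|$. Then $\rho\cdot 0 \in S \iff \rho\cdot 1 \in S$ because $S$ is completely branching, and $|\rho\cdot 0| > |\sigma| \iff |\rho\cdot 1| > |\sigma|$ trivially, so it remains to see $\sigma \bot \rho\cdot 0 \iff \sigma \bot \rho\cdot 1$. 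Since $|\rho\cdot i| > |\sigma|$ excludes $\rho\cdot i \preccurlyeq \sigma$, being comparable means $\sigma \preccurlyeq \rho\cdot i$; and if $\sigma \preccurlyeq \rho\cdot 1$ with $|\sigma| \leq |\rho|$, then $\sigma$ is the length-$|\sigma|$ prefix of $\rho\cdot 1$, which is the length-$|\sigma|$ prefix of $\rho$, so $\sigma \preccurlyeq \rho \preccurlyeq \rho\cdot 0$. Contrapositively $\sigma\bot\rho\cdot 0 \Rightarrow \sigma\bot\rho\cdot 1$, and by the symmetric argument $\sigma\bot\rho\cdot 1 \Rightarrow \sigma\bot\rho\cdot 0$.

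For the infiniteness part, suppose toward a contradiction that $\sigma_1 \neq \sigma_2$ are two elements of $A$ with $S_{\sigma_1}$ and $S_{\sigma_2}$ both finite. Let $F$ be the finite set of all binary strings of length $\leq \max(|\sigma_1|,|\sigma_2|)$, so that $S_{\sigma_1}\cup S_{\sigma_2}\cup F$ is finite; since $S$ is infinite, pick $\tau \in S$ outside this finite set. Then $|\tau| > |\sigma_j|$ (because $\tau\notin F$), and since $\tau \notin S_{\sigma_j}$ while $|\tau|>|\sigma_j|$, we must have $\neg(\sigma_j\bot\tau)$, i.e.\ $\sigma_j \preccurlyeq \tau$, for $j=1,2$. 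Hence $\sigma_1$ and $\sigma_2$ are both prefixes of $\tau$, so they are comparable, contradicting that $A$ is an antichain. The only bookkeeping needed — finite unions of finite sets are finite, and an infinite set is not contained in a finite one — is available in $\RCA_0$. The single genuinely delicate point in the whole argument is keeping the length inequalities straight in the complete-branching step; everything else is immediate.
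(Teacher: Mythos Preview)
Your proof is correct and follows essentially the same approach as the paper's: the completely-branching part is observed to hold for every~$\sigma$, and the infiniteness part is handled by assuming two bad $\sigma_1,\sigma_2\in A$, picking a long enough $\tau\in S$ outside $S_{\sigma_1}\cup S_{\sigma_2}$, and concluding $\sigma_1,\sigma_2\preccurlyeq\tau$. The paper is simply terser, asserting the completely-branching claim without the case analysis you spell out.
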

\begin{proof}
First, since~$S \subseteq 2^{<\NN}$ completely branching, then for every~$\sigma \in 2^{<\NN}$, the set~$S_\sigma$ is completely branching. Suppose for the sake of contradiction that there exists two strings
$\sigma, \rho \in A$ such that $S_\sigma$ and $S_\rho$ are both finite. Then pick any~$\tau \in S - (S_\sigma \cup S_\rho)$ with $|\tau| > \max(|\sigma|, |\rho|)$. It follows that $\sigma \prec \tau$ and $\rho \prec \tau$, and thus that~$\sigma$ and~$\rho$ are comparable, contradicting the fact that~$A$ is an antichain.
\end{proof}

%In this section we give a probabilistic proof of~\(\TCAC\)
\begin{proposition}\label[proposition]{proba}
	The measure of the oracles computing a solution for a computable instance of~\(\SAC\) is 1. 
\end{proposition}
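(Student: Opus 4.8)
The plan is to fix a computable infinite completely branching set $S \subseteq 2^{<\NN}$ and to show directly that the class $\mathcal{C}$ of oracles computing an infinite antichain of $S$ has positive measure. Since $\mathcal{C}$ is arithmetically definable (hence Borel) and closed under finite modifications of its members, Kolmogorov's zero-one law forces $\mu(\mathcal{C}) \in \{0,1\}$, so positive measure is all we need. To obtain it, I will exhibit a single Turing functional $\Phi$ such that $\Phi^X$ is an infinite antichain of $S$ for every $X$ in a set of measure at least $1/2$.

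The functional $\Phi$ builds an antichain $\sigma_0, \sigma_1, \dots \in S$ greedily, consuming $X$ in consecutive disjoint blocks of bits. It maintains a reservoir $R_n \subseteq S$ defined by $R_0 \coloneqq S$ and $R_{n+1} \coloneqq \{ \tau \in R_n : \tau \bot \sigma_n \text{ and } |\tau| > |\sigma_n| \}$ — i.e.\ $R_{n+1}$ is obtained from $R_n$ and $\sigma_n$ exactly as $S_\sigma$ is obtained from $S$ and $\sigma$ in \Cref{lem:completely-branching-at-most-one-wrong}. Each $R_n$ is computable uniformly in $\sigma_0, \dots, \sigma_{n-1}$ (recall $S$ is computable), and by construction every element of $R_n$ is incomparable with each of $\sigma_0, \dots, \sigma_{n-1}$, so $(\sigma_n)_{n}$ is automatically an antichain. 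At stage $n$, $\Phi^X$ searches $R_n$ for an antichain $\rho_0, \dots, \rho_{2^{n+2}-1}$ of size $2^{n+2}$ (the first one it enumerates), reads the next $n+2$ bits of $X$ as an index $j \in \{0, \dots, 2^{n+2}-1\}$, sets $\sigma_n \coloneqq \rho_j$, outputs $\sigma_n$, and continues. If some stage's search never halts, $\Phi^X$ is undefined from that point on.

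The crux is that a reservoir that is infinite and completely branching stays so with high probability. Suppose $R_n$ is infinite and completely branching. Then \Cref{lem:completely-branching-finite-antichain-exists} applied to $R_n$ guarantees that $R_n$ has an antichain of size $2^{n+2}$, so the stage-$n$ search halts; and \Cref{lem:completely-branching-at-most-one-wrong} applied to $R_n$ shows that for all but at most one of the indices $j$, the resulting reservoir $R_{n+1}$ is again infinite and completely branching. Let $\mathcal{G}$ be the event that $R_n$ is infinite and completely branching for every $n$. Since $R_0 = S$ witnesses the base case, and since the stage-$n$ choice of $j$ is uniform on $2^{n+2}$ values and independent of all earlier stages (distinct stages read disjoint blocks of bits), we get $\mu(\mathcal{G}) \geq \prod_{n \geq 0}\bigl(1 - 2^{-(n+2)}\bigr) \geq 1 - \sum_{n\geq 0} 2^{-(n+2)} = 1/2$. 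On $\mathcal{G}$ every search halts and $\Phi^X$ is a total function enumerating an infinite antichain of $S$; hence $\mathcal{G} \subseteq \mathcal{C}$ and $\mu(\mathcal{C}) \geq 1/2 > 0$. By the zero-one law, $\mu(\mathcal{C}) = 1$.

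I do not expect a genuine obstacle in this argument. The design decision that makes it go through is letting the sizes of the finite antichains grow fast enough — here $2^{n+2}$ — so that the per-stage failure probabilities $2^{-(n+2)}$ form a summable series and the infinite product of survival probabilities is bounded away from $0$; any sequence $(k_n)$ with $\sum_n 1/k_n < \infty$ would do. The points to be checked carefully are that both \Cref{lem:completely-branching-finite-antichain-exists} and \Cref{lem:completely-branching-at-most-one-wrong} are being applied to the current reservoir $R_n$ rather than to $S$, which is legitimate precisely because a good choice returns an infinite \emph{and completely branching} set (this is why that clause appears in \Cref{lem:completely-branching-at-most-one-wrong}), and that $\Phi$ is a bona fide partial functional whose totality on $X$ is implied by the combinatorial event $\mathcal{G}$, so that the measure estimate transfers. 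Finally, one can avoid invoking the zero-one law altogether: replacing $2^{n+2}$ by $2^{n+c}$ makes the corresponding event $\mathcal{G}_c$ have measure at least $1 - 2^{-(c-1)}$, so the union over $c$ of the associated success classes is already conull.
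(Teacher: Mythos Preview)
Your proof is correct and essentially identical to the paper's: the same greedy construction with reservoirs $R_n$ (the paper's $S_k$), the same antichain sizes $2^{n+2}$, the same appeals to \Cref{lem:completely-branching-finite-antichain-exists} and \Cref{lem:completely-branching-at-most-one-wrong}, the same probability estimate, and the same use of the zero-one law. Your presentation is slightly more explicit about how the random bits are consumed, and your closing remark about avoiding the zero-one law via a union over $c$ is a nice extra observation not in the paper.
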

\begin{remark}
	The proof of the above proposition is carried out purely as a computability statement, hence we have access to as much induction as needed. 
\end{remark}
\begin{proof}
	Let~\(S \subseteq 2^{<\NN}\) be an infinite computably branching set. We are going to build a decreasing sequence of infinite completely branching sets of strings~$S_0 \supseteq S_1 \supseteq \dots$, with~$S_0\coloneqq S$, together with finite antichains \(A_i\subseteq S_i\) (for \(i\in\NN\)), in order to have an infinite antichain \(A\coloneqq\{\sigma_i\mid i\in\NN\}\) where \(\sigma_i\in A_i\).
	
	This construction will work with positive probability, and since the class of oracles computing a solution to the instance \(S\) is invariant under Turing equivalence, this implies that this class has measure 1. Indeed, by Kolmogorov's 0-1 law, every measurable Turing-invariant class has either measure 0 or 1.

	First, let $S_0\coloneqq S$.
	At step~\(k\), assume the sets~\(S_0 \supseteq S_1\supseteq\ldots\supseteq S_k\) and \(A_0, \ldots, A_{k-1}\) have been defined, as well as the finite antichain \(\{\sigma_0, \ldots, \sigma_{k-1}\}\), such that \(\forall \tau\in S_k, \forall i<k, \sigma_i\bot\tau\).
	
	Search computably for a finite antichain~\(A_k\subseteq S_k\) of size~\(2^{k+2}\). If found, pick an element~\(\sigma_k\in A_k\) at random. Then define~\(S_{k+1}\coloneqq \{\tau\in S_k\mid \sigma_k\bot \tau \mbox{ and } |\tau| > |\sigma_k| \}\) for the next step.
	
	If the procedure never stops, it yields an infinite antichain~\(A\coloneqq \{\sigma_i\mid i\in\NN\}\) thanks to the definition of the sets \((S_i)_{i<k}\). Assuming that $S_k$ is an infinite completely branching set, \Cref{lem:completely-branching-finite-antichain-exists} ensures that $A_k$ will be found. 
	
	However, if at any point, $S_k$ is not an infinite completely branching set, then at some point~\(t\) we will not be able to find a large enough~\(A_t\) in it.
	If this happens, since~$S_{k+1}$ is completely determined by $S_k$ and $\sigma_k$, it means that we have chosen some ``bad" $\sigma_k \in A_k$. Luckily, by \Cref{lem:completely-branching-at-most-one-wrong}, there is at most one element of this kind in~$A_k$. Thus, if we pick $\sigma_k$ at random in~$A_k$, we have at most $\frac{1}{|A_k|}=\frac{1}{2^{k+2}}$ chances for this case to happen. So the overall probability that this procedure fails is less than~\(\sum_{k\geqslant0} \frac{1}{2^{k+2}}=\frac{1}{2}\). Hence we found an antichain with positive probability.
\end{proof}

Very few theorems studied in reverse mathematics admit a probabilistic proof. \Cref{proba} provides a powerful method for separating the statement $\TCAC$ from many theorems in reverse mathematics. In what follows, $\mathsf{AMT}$ stands for the Atomic Model Theorem, studied by Hirschfeldt, Shore and Slaman~\cite{hirschfeldt2009atomic},
$\mathsf{COH}$ is the cohesiveness principle, defined by Cholak, Jockusch and Slaman~\cite[Statement 7.7]{cholak2001strength},
and $\mathsf{RWKL}$ is the Ramsey-type Weak K\"onig's lemma, defined by Flood~\cite[Statement 2]{flood2012reverse} under the name~$\mathsf{RKL}$.

\begin{corollary}
Over $\RCA_0$, $\TCAC$ implies none of $\mathsf{AMT}$, $\COH$ and $\mathsf{RWKL}$.
%	\(\RCA_0\vdash\TCAC\centernot\implies\mathsf{AMT}\)\\
%	\(\RCA_0\vdash\TCAC\centernot\implies\COH\)\\
%	\(\RCA_0\vdash\TCAC\centernot\implies\mathsf{RWKL}\)
\end{corollary}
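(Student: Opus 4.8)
The plan is to parlay \Cref{proba} into a model-theoretic separation: for each $Q \in \{\AMT,\COH,\RWKL\}$ I will build an $\omega$-model of $\RCA_0 + \TCAC$ that fails $Q$, by iterating the probabilistic construction of \Cref{proba} while maintaining a ``measure-zero'' invariant relative to a fixed bad instance of $Q$. The first move is to reduce to $\SAC$: by \Cref{big-equiv} we have $\RCA_0 \vdash \TCAC \leftrightarrow \TAC + \BSig_2$, by \Cref{sac-equiv-tac} we have $\RCA_0 \vdash \TAC \leftrightarrow \SAC$, and every $\omega$-model satisfies $\BSig_2$; hence any $\omega$-model of $\RCA_0 + \SAC$ is an $\omega$-model of $\TCAC$, and it suffices to build, for each $Q$, an $\omega$-model of $\RCA_0 + \SAC$ in which $Q$ fails.

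\emph{Iteration lemma.} Suppose $Q$ admits a computable instance $I_Q$ with no computable solution such that the Turing-invariant class $\mathcal{N}_Q := \{U : U \text{ computes a } Q\text{-solution of } I_Q\}$ is null. I will construct a $\leqslant_T$-increasing sequence $\emptyset = G_0 \leqslant_T G_1 \leqslant_T \dots$, driven by a bookkeeping of all pairs $(e,n)$, so that (i) $G_s \notin \mathcal{N}_Q$ and $\mu\{U : G_s \oplus U \in \mathcal{N}_Q\} = 0$ for every $s$, and (ii) whenever $\Phi_e^{G_n}$ is (the indicator of) an infinite completely branching set, some later $G_t$ computes an infinite antichain of it. At the step handling $(e,n)$ with $\Phi_e^{G_n}$ a $G_s$-computable $\SAC$-instance $I$: by \Cref{proba} relativized to $G_s$ (its proof is a purely probabilistic construction, so it relativizes), the set $\mathcal{A}$ of oracles $U$ with $G_s \oplus U$ computing a solution of $I$ is conull; by invariant (i) together with Fubini — noting that $(U,W) \mapsto U \oplus W$ is measure preserving, so $\mu_{(U,W)}\{(U,W) : G_s \oplus U \oplus W \in \mathcal{N}_Q\} = \mu_V\{V : G_s \oplus V \in \mathcal{N}_Q\} = 0$ — the set $\mathcal{B}$ of $U$ for which $G_s \oplus U$ again satisfies invariant (i) is conull; and $\mathcal{C} := \{U : G_s \oplus U \notin \mathcal{N}_Q\}$ is conull. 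Pick $U \in \mathcal{A} \cap \mathcal{B} \cap \mathcal{C}$ and put $G_{s+1} := G_s \oplus U$. The Turing ideal $\{X : \exists s\ X \leqslant_T G_s\}$ is then an $\omega$-model of $\RCA_0 + \SAC$ in which $I_Q$ has no solution, hence an $\omega$-model of $\RCA_0 + \TCAC + \neg Q$.

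\emph{The three bad instances.} It remains to exhibit such an instance for each of $\AMT$, $\COH$, $\RWKL$. For $\COH$ this is classical: the universal computable instance (cohesiveness for a uniformly computable family enumerating all c.e.\ sets) has all of its solutions in a null class of degrees — e.g.\ by the analysis of Jockusch and Stephan the jump of any such solution is of PA degree over $\emptyset'$, and $\{U : U' \text{ is of PA degree over } \emptyset'\}$ is null (almost every $U$ is generalized low, so $U' \equiv_T U \oplus \emptyset'$, and $\{U : U \oplus \emptyset' \text{ is of PA degree over } \emptyset'\}$ is null by relativizing ``the PA degrees form a null class''). For $\AMT$, the Hirschfeldt--Shore--Slaman analysis \cite{hirschfeldt2009atomic} provides a computable complete atomic theory all of whose atomic models compute an object lying in a null class (e.g.\ a function dominating every $\emptyset'$-computable function, whence every solution $U$ satisfies $U' \geqslant_T \emptyset''$, a null condition on $U$). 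For $\RWKL$, one uses the corresponding known computable instance (see Flood \cite{flood2012reverse} and subsequent work) whose solutions all compute a $\DNC$ function of sufficiently slow growth, hence again form a null class. In all three cases the witnessing class is null and contains no computable set, as required.

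\emph{Main obstacle.} The iteration lemma is routine once the two measure facts — that \Cref{proba} relativizes, and that Fubini propagates the nullity invariant under joining with a random oracle — are in place. The real content, and where I expect the actual effort to go, is in the third step: selecting and correctly attributing a computable instance of $\AMT$ (resp.\ $\RWKL$) together with a null class of oracles containing all its solutions; for $\COH$ the needed fact is standard.
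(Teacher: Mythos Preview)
Your overall strategy matches the paper's: both rest on the fact that each of $\AMT$, $\COH$, $\RWKL$ admits a computable instance whose class of oracles computing a solution has measure~$0$, combined with \Cref{proba}. The paper's proof is a single sentence citing Astor et al.~\cite{astor20xxweakness} for that fact and leaves the passage to an $\omega$-model separation implicit; your iteration lemma spells that passage out correctly, and the Fubini step propagating the nullity invariant is fine.

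The gap is exactly where you locate it, in the ``three bad instances'' paragraph. Your $\COH$ argument via Jockusch--Stephan and $2$-randomness is essentially correct. Your $\AMT$ justification, however, is wrong: Hirschfeldt, Shore and Slaman do \emph{not} produce an instance whose solutions are all high; on the contrary, every computable complete atomic theory has a low atomic model (this is due to Csima, and is consistent with the weakness of $\AMT$ established in~\cite{hirschfeldt2009atomic}), so no computable instance of $\AMT$ can force $U' \geqslant_T \emptyset''$. Your $\RWKL$ justification is likewise unsubstantiated: $\RWKL$ does imply $\DNC$ over $\RCA_0$, but the $\DNC$ degrees form a \emph{conull} class, and you give no argument that some instance forces a computably bounded $\DNC$ function, which is what ``sufficiently slow growth'' would have to mean for the class to be null. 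The existence of null-solution instances for $\AMT$ and $\RWKL$ is nonetheless correct and is precisely the content of the Astor et al.\ reference the paper invokes; the fix is to cite that paper rather than improvise mechanisms that do not hold up.
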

\begin{proof}
	These three statements have a computable instance such that the measure of the oracles computing a solution is 0, see Astor et al.\ \cite{astor20xxweakness}.
\end{proof}

The argument of \Cref{proba} can be formalized over~$\RCA_0$ to yield the following result.

\begin{definition}[$2\mbox{-}\mathsf{RAN}$]
For every sequence of uniformly $\Pi^0_2$ binary trees~$T_0, T_1,\ldots$
such that, for every~$n$, $\mu([T_n]) > 1-2^{-n}$, there is some~$n$ and some set~$X$ such that
$X \in [T_n]$.
\end{definition}

\begin{proposition}
$\RCA_0 \vdash 2\mbox{-}\mathsf{RAN} \implies \SAC$.
\end{proposition}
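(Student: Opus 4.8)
The plan is to recast the probabilistic proof of \Cref{proba} inside $2\mbox{-}\mathsf{RAN}$: the ``successful runs'' of the randomized construction of that proof become the infinite paths of a uniformly $\Pi^0_2$ sequence of binary trees, and after amplifying the probability of success these trees meet the measure hypothesis of $2\mbox{-}\mathsf{RAN}$. Fix an instance $S \subseteq 2^{<\NN}$ of $\SAC$, i.e.\ an infinite completely branching set, and fix $n$. Put $\ell_0 \coloneqq 0$ and $\ell_{k+1} \coloneqq \ell_k + (k+n+2)$, and to each oracle $X \in 2^{\NN}$ associate the following amplified version of the construction of \Cref{proba}: read $X$ in the consecutive blocks $\llbracket \ell_k, \ell_{k+1}\llbracket$, so that the $k$-th block has length $k+n+2$ and codes an integer below $2^{k+n+2}$; set $S^{(0)} \coloneqq S$, and at stage $k$, having defined a set $S^{(k)}$ depending only on $X\uh\ell_k$, search for an antichain $A_k \subseteq S^{(k)}$ of size $2^{k+n+2}$; if one is found, let $\sigma_k$ be its element whose index is the integer coded by the $k$-th block of $X$, and set $S^{(k+1)} \coloneqq \{\tau \in S^{(k)} : \sigma_k \bot \tau \text{ and } |\tau| > |\sigma_k|\}$. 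Exactly as in \Cref{proba}, every $S^{(k)}$ is completely branching, and by \Cref{lem:completely-branching-finite-antichain-exists} the stage-$k$ search halts whenever $S^{(k)}$ is infinite; moreover the assertion ``$S^{(k)}$ is defined and infinite'' is determined by $X\uh\ell_k$, and on that initial segment it is a conjunction of a $\Sigma^0_1$ and a $\Pi^0_2$ condition.

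Let $T_n$ be the set of $w \in 2^{<\NN}$ such that for every $k$ with $\ell_k \le |w|$, the set $S^{(k)}$ determined by $w\uh\ell_k$ is defined and infinite. Since each clause depends only on $w\uh\ell_k$, the set $T_n$ is downward closed, hence a binary tree, and it is $\Pi^0_2$ uniformly in $n$. Its infinite paths are exactly the $X$ for which $S^{(k)}$ is defined and infinite for every $k$; for such an $X$ every search halts, so $\sigma_0, \sigma_1, \dots$ are all defined, and as in \Cref{proba} they have strictly increasing lengths and are pairwise incomparable, so $A \coloneqq \{\sigma_k : k \in \NN\}$ is computable from $X \oplus S$ and $\RCA_0$ proves it to be an infinite antichain of $S$. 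Therefore, applying $2\mbox{-}\mathsf{RAN}$ to $(T_n)_{n\in\NN}$ yields some $n$ and some $X \in [T_n]$, hence an $(X\oplus S)$-computable infinite antichain of $S$, which then exists by $\Delta^0_1$ comprehension.

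It thus remains to prove $\mu([T_n]) > 1-2^{-n}$. Write $\tilde{G}_k \coloneqq \{X \in 2^{\NN} : S^{(k)} \text{ is defined and infinite on } X\}$, so that $[T_n] = \bigcap_k \tilde{G}_k$, a decreasing intersection with $\tilde{G}_0 = 2^{\NN}$ because $S$ is infinite, each $\tilde{G}_k$ being a union of cylinders of length $\ell_k$. Fix $k \ge 1$ and a cylinder $[\rho] \subseteq \tilde{G}_{k-1}$ with $|\rho| = \ell_{k-1}$: on $[\rho]$ the set $S^{(k-1)}$ is a fixed infinite completely branching set and $A_{k-1}$ a fixed antichain of size $2^{k+n+1}$, and $\sigma_{k-1}$ ranges uniformly over the elements of $A_{k-1}$ as the $k+n+1$ fresh bits of the $(k-1)$-st block of $X$ vary. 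By \Cref{lem:completely-branching-at-most-one-wrong}, at most one such choice makes $S^{(k)}$ finite, so $(\tilde{G}_{k-1}\setminus\tilde{G}_k)\cap[\rho]$ is contained in at most one cylinder of length $\ell_k$; summing over the at most $2^{\ell_{k-1}}$ such $\rho$ gives $\mu(\tilde{G}_{k-1}\setminus\tilde{G}_k) \le 2^{\ell_{k-1}-\ell_k} = 2^{-(k+n+1)}$. Hence $\mu(2^{\NN}\setminus[T_n]) = \sum_{k\ge 1}\mu(\tilde{G}_{k-1}\setminus\tilde{G}_k) \le \sum_{k\ge 1}2^{-(k+n+1)} = 2^{-(n+1)}$, so $\mu([T_n]) \ge 1-2^{-(n+1)} > 1-2^{-n}$.

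The step I expect to be the main obstacle is the faithful formalization of this measure estimate in $\RCA_0$: one has to read ``$\mu([T_n]) > 1-2^{-n}$'' with the meaning it carries in the formalization underlying $2\mbox{-}\mathsf{RAN}$ and verify it there. The right way is to observe that at every finite level $N$ the complement $2^{\NN}\setminus\tilde{G}_N$ is contained in a union of at most $\sum_{k\le N}2^{\ell_{k-1}}$ cylinders of total measure at most $\sum_{k\ge 1}2^{-(k+n+1)} = 2^{-(n+1)}$, a convergent geometric series of rationals whose bound $\RCA_0$ establishes directly, the only nonelementary ingredient being \Cref{lem:completely-branching-at-most-one-wrong}, which is available in $\RCA_0$. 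One must also check the routine but real points that $T_n$ is, as $2\mbox{-}\mathsf{RAN}$ requires, a binary tree defined by a single $\Pi^0_2$ formula uniformly in $n$, and that the decoding of a path of $T_n$ into an infinite antichain of $S$ goes through in $\RCA_0$.
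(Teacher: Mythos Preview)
Your proof is correct and takes essentially the same approach as the paper: both formalize the probabilistic construction of \Cref{proba} by amplifying the antichain sizes to $2^{n+k+O(1)}$ so that the failure probability drops below $2^{-n}$, then package the successful runs as the paths of a uniformly $\Pi^0_2$ tree to which $2\mbox{-}\mathsf{RAN}$ applies. Your execution is in fact tidier than the paper's: you read the oracle in disjoint consecutive blocks (so the independence needed for the measure estimate is transparent), and your choice of block length $k+n+2$ gives $\mu([T_n])\ge 1-2^{-(n+1)}$, which cleanly meets the strict inequality $>1-2^{-n}$ in the hypothesis of $2\mbox{-}\mathsf{RAN}$; the paper instead encodes the ``bad'' index at stage $k$ as a prefix of length $n+k+1$ and passes through the $\Sigma^0_2$ open class $\U_n$ of failing oracles, a presentation that is morally the same but leaves the independence of successive choices and the strictness of the bound less explicit.
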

\begin{proof}
For every~$n$, consider the construction of \Cref{proba}, where the antichain~$A_k$ is of size~$2^{n+k+1}$ instead of~$2^{k+2}$. For each~$k$, let~$\sigma_k \in A_k$ be the unique ``bad" choice (if it exists), that is, which makes the set~$S_{k+1}$ finite, and let~$\tau_k$ be the string of length~$n+k+1$ corresponding to the binary representation of the rank of~$\sigma_k$ in~$A_k$ for some fixed order on binary strings. Then one can compute \(\sigma_k\) from  \(\tau_k\) and the finite set $A_k$. Note that \(\tau_k\) is undefined when \(\sigma_k\) does not exist.

Consider the $\Sigma^0_2$ class~$\U_n \coloneqq \{ X \in 2^\NN\mid \exists k, \tau_k \prec X \}=\bigcup_k [\tau_k]$. It verifies $$\mu(\U_n) \leqslant \sum_{{k \geqslant 0}\atop {\sigma_k \text{exists}}} \mu\big([\tau_k]\big) \leqslant \sum_{k \geqslant 0} \frac{1}{2^{n+k+1}} = 2^{-n}$$
Let~$T_n$ be a $\Pi^0_2$ tree such that $[T_n] = 2^\NN - \U_n$. We can now consider the sequence of trees \((T_n)_{n\in\NN}\).
By $2\mbox{-}\mathsf{RAN}$, there is some~$n$ and some~$X \in [T_n]$. For any instance of \(\SAC\), find a solution by running the construction given in \Cref{proba} with the help of~$X$ to avoid the potential ``bad" choice in each~$A_k$.
\end{proof}

\begin{corollary}
Over $\RCA_0$, $\SAC$ (and therefore $\TAC$) implies none of~$\BSig_2$ and $\TCAC$.
\end{corollary}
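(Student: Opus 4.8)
The plan is to reduce the statement to a single model-existence fact and then establish that fact by an iterated random forcing. By \Cref{big-equiv} we have $\RCA_0\vdash\TCAC\to\BSig_2$, by \Cref{sac-equiv-tac} we have $\RCA_0\vdash\SAC\leftrightarrow\TAC$, and by the proposition just proved $\RCA_0\vdash 2\mbox{-}\mathsf{RAN}\to\SAC$. Hence it suffices to construct a (necessarily nonstandard) model $\mathcal{N}\models\RCA_0+2\mbox{-}\mathsf{RAN}+\neg\BSig_2$: such an $\mathcal{N}$ then satisfies $\SAC$ and $\TAC$, and since $\BSig_2$ fails in it so does $\TCAC$; thus $\mathcal{N}$ simultaneously witnesses $\RCA_0+\SAC\nvdash\BSig_2$, $\RCA_0+\SAC\nvdash\TCAC$, and the two analogous non-implications with $\TAC$ in place of $\SAC$. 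Passing through $2\mbox{-}\mathsf{RAN}$ rather than attacking $\SAC$ directly is what makes randomness usable: the construction of \Cref{proba} only succeeds with positive probability, whereas the amplified construction feeding $2\mbox{-}\mathsf{RAN}$ has failure probability tending to $0$, so a single sufficiently random real lands in cofinitely many of the trees $T_n$.

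To build $\mathcal{N}$, fix a countable model $\mathcal{M}_0=(M,\mathcal{S}_0)$ in which $M\models\IS_1+\neg\BS_2$ (these exist, by the classical fact that $\IS_1\nvdash\BS_2$) and $\mathcal{S}_0$ is the family of $\Delta^0_1$-definable subsets of $M$, so that $\mathcal{M}_0\models\RCA_0+\neg\BSig_2$. Build an increasing chain $\mathcal{M}_0\subseteq\mathcal{M}_1\subseteq\cdots$ of countable structures, all with first-order part $M$, as follows. Fix a bookkeeping enumerating all $2\mbox{-}\mathsf{RAN}$-instances that will ever appear; at stage $i$, with the instance $(T_n)_{n}$ under consideration and $p\in\mathcal{S}_i$ a parameter making it uniformly $\Pi^0_2$, let $G\subseteq M$ be $2$-random relative to $p$ over $\mathcal{M}_i$ (i.e.\ $G$ avoids every $\Sigma^0_2(p)$ null class coded in $\mathcal{M}_i$; such $G$ exists since $\mathcal{M}_i$ is countable), and let $\mathcal{S}_{i+1}$ be the $M$-Turing ideal generated by $\mathcal{S}_i\cup\{G\}$. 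Three points must be checked at each stage. First, $G$ provides a solution: the classes $\U_m\coloneqq\bigcup_{n\geqslant m}\bigl(2^{\NN}\setminus[T_n]\bigr)$ are uniformly $\Sigma^0_2(p)$ with $\mu(\U_m)\leqslant\sum_{n\geqslant m}2^{-n}=2^{1-m}$, hence form a Martin--L\"of test relative to $p'$; since $G$ is $2$-random relative to $p$ it escapes this test, so $G\in[T_n]$ for all large $n$, and in particular $\mathcal{M}_{i+1}$ contains an element of some $T_n$. Second, $\mathcal{M}_{i+1}\models\RCA_0$: its second-order part is a Turing ideal over $M\models\IS_1$, and it is standard that adjoining a sufficiently random real preserves $\ISig_1$ — this is precisely the verification underlying the fact that $\WWKL_0$ and its higher-randomness relativizations are theories built over $\RCA_0$. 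Third, $\mathcal{M}_{i+1}\models\neg\BSig_2$: this is automatic, because the first-order part $M$ is never changed, so any instance of $\BSig_2$ that fails in $\mathcal{M}_i$ — given by a $\Sigma^0_2$ formula $\psi$, a bound $a$, and parameters in $\mathcal{S}_i$ for which $(\forall x<a)(\exists y)\psi(x,y)$ holds while no uniform bound exists — still fails in $\mathcal{M}_{i+1}$, as this is a first-order assertion whose quantifiers range over the unchanged $M$.

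Finally, set $\mathcal{N}=(M,\bigcup_i\mathcal{S}_i)$. With the bookkeeping arranged in the usual dovetailing fashion, every $2\mbox{-}\mathsf{RAN}$-instance of $\mathcal{N}$ is treated at some stage, so $\mathcal{N}\models 2\mbox{-}\mathsf{RAN}$; the union of a chain of $\RCA_0$-models with common first-order part is again an $\RCA_0$-model (the union of Turing ideals over $M$ is a Turing ideal); and $\neg\BSig_2$ passes to the union since $M$ is fixed. Hence $\mathcal{N}\models\RCA_0+2\mbox{-}\mathsf{RAN}+\neg\BSig_2$, which completes the argument as explained in the first paragraph. The only step carrying genuine content is the preservation of $\ISig_1$ when the random reals are adjoined — the ``tameness'' of random forcing for first-order induction; alternatively, one could bypass the construction altogether by invoking the known $\Pi^1_1$-conservativity of $2\mbox{-}\mathsf{RAN}$ over $\RCA_0$, which together with $\RCA_0\nvdash\BSig_2$ and the preceding proposition yields the corollary at once.
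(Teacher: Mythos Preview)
Your reduction is exactly the paper's: use $\RCA_0\vdash 2\mbox{-}\mathsf{RAN}\to\SAC$, $\SAC\leftrightarrow\TAC$, and $\TCAC\leftrightarrow\TAC+\BSig_2$ to reduce everything to the single fact that $2\mbox{-}\mathsf{RAN}\nvdash\BSig_2$ over $\RCA_0$. The paper simply cites this fact (attributed to Slaman, unpublished, and also appearing in~\cite{phpbs2}) and stops there.

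You go further and sketch the model construction behind that citation via an $\omega$-length iteration of random forcing over a countable $(M,\mathcal{S}_0)\models\RCA_0+\neg\BSig_2$. The outline is correct and you correctly isolate the one nontrivial step: that adjoining a suitably random real over a nonstandard model preserves $\ISig_1$. But you do not prove this step, and it is precisely the substance of the Slaman argument --- in a nonstandard first-order part one must set up the notion of genericity/randomness for $M$-coded tests and verify that no $\Sigma^0_1(G)$-definable $M$-cut is introduced, which takes real work. Your closing remark that one could instead invoke $\Pi^1_1$-conservativity of $2\mbox{-}\mathsf{RAN}$ over $\RCA_0$ is circular in the same way: that conservativity \emph{is} the content of the cited result, and is established by essentially the construction you outline. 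So your proposal is best read as the paper's proof together with an informative (but not self-contained) unpacking of the black-box citation.
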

\begin{proof}
Slaman [unpublished] proved that $2\mbox{-}\mathsf{RAN}$ does not imply~$\BSig_2$ over~$\RCA_0$; The result can also be found in~\cite{phpbs2}. The corollary follows from $\RCA_0 \vdash \SAC \implies \TAC$ (\Cref{sac-equiv-tac}) and $\RCA_0 \vdash \TAC+\BS^0_2 \iff \TCAC$ (\Cref{big-equiv}).
\end{proof}

We are now going to refine \Cref{proba} by proving that some variant of~\(\DNC\) is sufficient to compute a solution of~\(\SAC\).

\begin{definition}[Diagonally non-computable function]
A function $f : \NN \to \NN$ is \textbf{diagonally non-computable relative to~$X$} (or $\DNC(X)$) if for every~$e$, $f(e) \neq \Phi^X_e(e)$. Whenever~$f$ is dominated by a function $h : \NN \to \NN$, then we say that $f$ is $\DNC_h(X)$.
A Turing degree is $\DNC_h(X)$ if it contains a $\DNC_h(X)$ function.
\end{definition}

The following lemma gives a much more convenient way to work with $\DNC_h(X)$ functions.
% gives We now proceed to see how a~\(\DNC\) degree can compute a solution of~\(\TCAC\). We use a lemma to have a more convenient way to handle the~\(\DNC\) notion we are interested in.

\begin{lemma}[Folklore]\label[lemma]{dncequiv}
	Let~\(A, X\) be subsets of \(\NN\). The following are equivalent:
	\begin{itemize}
		\item[(1)] \(A\) is of degree~\(\DNC_h(X)\) for some computable (primitive recursive) function~\(h:\NN\to\NN\).
		\item[(2)] \(A\) computes a function~\(g:\NN^2\to\NN\) such that
		$$\forall e, n, |W_e^X|\leqslant n\implies g(e, n)\notin W_e^X$$ and which is dominated by a computable function~\(b:\NN^2\to\NN\), \ie
		$$\forall e, n, g(e, n)<b(e, n)$$
	\end{itemize}
\end{lemma}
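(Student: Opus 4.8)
The plan is to prove the two substantial implications separately: $(2)\Rightarrow(1)$ by a direct coding argument, and $(1)\Rightarrow(2)$ by going through the folklore fact that, for a computable $h$, the degrees computing a $\DNC_h(X)$ function are exactly the degrees that are $\PA$ relative to $X$.

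For $(2)\Rightarrow(1)$, I would start from an $A$-computable $g$ as in (2), with computable bound $b$, and use the $s$-$m$-$n$ theorem to fix a primitive recursive $s$ with $W^X_{s(e)}=\{\Phi^X_e(e)\}$ when $\Phi^X_e(e)$ halts and $W^X_{s(e)}=\varnothing$ otherwise. Then $|W^X_{s(e)}|\le 1$ for all $e$, so $f(e):=g(s(e),1)$ is $A$-computable, satisfies $f(e)\neq\Phi^X_e(e)$ whenever the latter is defined, and obeys $f(e)<b(s(e),1)=:h(e)$ with $h$ computable; hence $A$ has degree $\DNC_h(X)$. A primitive recursive witness then comes for free: having a computably bounded $\DNC(X)$ function, $A$ is of $\PA$ degree relative to $X$, hence computes a $\{0,1\}$-valued $\DNC(X)$ function, which is $\DNC_h(X)$ for the constant function $h\equiv 2$.

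For $(1)\Rightarrow(2)$, assume $A$ computes a $\DNC_h(X)$ function for some computable $h$. I would invoke the folklore reduction of $\DNC_h$ (computable $h$) to $\DNC_2$ to conclude that $A$ is of $\PA$ degree relative to $X$, so that $A$ computes, uniformly, a member of every nonempty $\Pi^0_1(X)$ class; in particular for finite $\Pi^0_1(X)$ classes, where this amounts to navigating a finite co-c.e.\ tree with a $\{0,1\}$-valued $\DNC(X)$ function. Given $e$ and $n$, let $\mathcal{P}_{e,n}:=\{x\le n:x\notin W^X_e\}$: its canonical index is computable from $(e,n)$, and it is a $\Pi^0_1(X)$ class, nonempty whenever $|W^X_e|\le n$ (as then $|W^X_e\cap\{0,\ldots,n\}|\le n<n+1$). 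Define $g(e,n)$ to be the member of $\mathcal{P}_{e,n}$ returned by the $A$-computable basis operator, clamped to lie in $\{0,\ldots,n\}$ in the irrelevant case $\mathcal{P}_{e,n}=\varnothing$. Then $g\le_T A$, $g(e,n)<n+1=:b(e,n)$ with $b$ computable, and $|W^X_e|\le n$ forces $g(e,n)\in\mathcal{P}_{e,n}$, i.e.\ $g(e,n)\notin W^X_e$.

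The main obstacle lies in $(1)\Rightarrow(2)$, precisely in the collapse of $\DNC_h$ (computable $h$) to $\PA$ relative to $X$. A naive direct attempt fails: one might feed $f$ a sequence of indices pointing at the first not-yet-enumerated element of $W^X_e$, hoping to herd all of $W^X_e$ into a finite set and then output its least non-member; but $f$ is only guaranteed to dodge one prescribed value per index, so it can avoid the first-enumerated element of $W^X_e$ at every stage without ever being forced to reveal a new element, and one cannot detect when $W^X_e$ has been exhausted. Routing through $\PA$ avoids this, using the boundedness of $h$ exactly once, namely to extract a $\{0,1\}$-valued $\DNC(X)$ function from $f$. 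The rest --- $s$-$m$-$n$ bookkeeping, computability of the bounds, the primitive-recursive refinement --- is routine.
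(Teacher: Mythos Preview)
Your $(2)\Rightarrow(1)$ is essentially identical to the paper's. For $(1)\Rightarrow(2)$, however, you take a genuinely different route: you pass through the folklore equivalence ``$\DNC_h(X)$ for some computable $h$'' $\iff$ ``$\PA$ over $X$'', and then invoke the uniform basis for $\Pi^0_1(X)$ classes to select an element of $\{x\le n: x\notin W^X_e\}$. This is correct (the uniformity of the basis operator is standard), and it has the pleasant side effect of making the primitive-recursive strengthening of the bound immediate via the $\{0,1\}$-valued $\DNC$ function.

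The paper, by contrast, gives a direct and self-contained construction that never invokes the $\PA$ equivalence. Given $f$ which is $\DNC_h(X)$, it sets $g(e,n)\coloneqq\langle f(u(e,0)),\ldots,f(u(e,n-1))\rangle$, where $u(e,i)$ is an index for the $X$-partial function that waits for the $i$th element of $W^X_e$ to be enumerated, decodes it as an $n$-tuple, and returns its $i$th coordinate. If $g(e,n)$ happened to be the $i$th element of $W^X_e$, one would get $\Phi^X_{u(e,i)}(u(e,i))=f(u(e,i))$, contradicting $\DNC$. The computable bound is simply $b(e,n)\coloneqq\langle h(u(e,0)),\ldots,h(u(e,n-1))\rangle$. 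So your diagnosis that ``a naive direct attempt fails'' and that one must route through $\PA$ is too pessimistic: the tupling trick---spreading the $n$ possible positions across $n$ separate $\DNC$ queries and packaging the answers back into a single number---is precisely the idea that makes a direct argument go through. Your approach buys you the primitive-recursive bound for free; the paper's approach buys you an elementary proof that does not import the (nontrivial) $\DNC_h$-to-$\DNC_2$ collapse as a black box.
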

\begin{proof}
	\((2)\implies(1)\).
	Let~\(i:\NN\to\NN\) be a computable (primitive recursive) function such that for any~\(e\in\NN\) and~\(B\subseteq\NN\) we have~\(\Phi^B_{i(e)}(x)\downarrow\iff x=\Phi_e^B(e)\).
	Thus \[W_{i(e)}^B=\begin{cases}
		\{\Phi_e^B(e)\}&\text{if }e\in B'\\
		\varnothing&\text{otherwise}
	\end{cases}\]

	From there, define the~\(A\)-computable function~\(f:\NN\to\NN\) by~\(f:e\mapsto g(i(e), 1)\). It is~\(\DNC(X)\) because~\(g(i(e), 1)\notin W_{i(e)}^X\) since~\(|W_{i(e)}^X|\leqslant 1\).
	Moreover, $f$ is dominated by the computable function~\(e\mapsto b(i(e), 1)\), because~\(b\) computably dominates~\(g\).

	\((1)\implies(2)\).
	Let~\(f\) be a~\(\DNC_h(X)\) function computed by~\(A\). Given the pair~\(e, n\), we describe the process that defines~\(g(e, n)\).
	
	\textbf{Construction.}
	For each~\(i<n\), we compute the code~\(u(e, i)\) of the~\(X\)-computable function which, on any input, looks for the~\(i^{\text{th}}\) element of~\(W_e^{X}\).
	If it finds such an element, then it interprets it as an~\(n\)-tuple~\(\langle k_0, \ldots, k_{n-1}\rangle\) and returns the value~\(k_i\).
	If it never finds such an element, then the function diverges.
	Finally we define~\(g:e, n\mapsto \langle f(u(e, 0)), \ldots, f(u(e, n-1))\rangle\)
	
	\textbf{Verification.}
	First, since~\(f\) is dominated by~\(h\), and since the function~\(\langle -,\ldots, -\rangle\) computing an~\(n\)-tuple is increasing on each variable, we can dominate~\(g\) with the computable function
	$$b:e, n\mapsto \langle h(u(e, 0)), \ldots, h(u(e, n-1))\rangle$$
	
	Now, by contradiction, suppose~\(g\) does not satisfy (2), \ie suppose there exists~\(e, n\) such that~\(|W_e^X|\leqslant n\) but~\(g(e, n)\in W_e^X\). Because~\(W_e^X\) has fewer than~\(n\) elements, we can suppose~\(g(e, n)\) is the~\(i^{\text{th}}\) one for a some~\(i<n\). Thus the function~\(\Phi_{u(e, i)}^X\) is constantly equal to~\(k_i\) where~\(g(e, n)=\langle k_0, \ldots, k_{n-1}\rangle\), in particular~\(\Phi_{u(e, i)}^X(u(e, i))=k_i\). But we also have
	$$g(e, n)=\langle f(u(e, 0)), \ldots, f(u(e, n-1))\rangle$$
	implying~\(f(u(e, i))=k_i=\Phi_{u(e, i)}^X(u(e, i))\), which is impossible as~\(f\) is supposed to be~\(\DNC_h(X)\).
\end{proof}

We are now ready to prove the following proposition. Conidis~\cite{conidistac} independently proved the same statement for~$\TAC$ with a similar construction. Note that by the equivalence of $\TAC+\BS^0_2$ with $\TCAC$, Conidis result implies \Cref{prop:tcac-dnch}. % [Theorem 4.5]

\begin{proposition}\label[proposition]{prop:tcac-dnch}
	Let~\(S \subseteq\NN^{<\NN}\) be an instance of~\(\SAC\). Every set~\(X\) of degree~\(\DNC_h(\emptyset')\), with~\(h\) a computable function, computes a solution of~\(S\).
\end{proposition}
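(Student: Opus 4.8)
The plan is to mimic the probabilistic construction of \Cref{proba}, but replace the random choices of $\sigma_k \in A_k$ by choices guided by the $\DNC_h(\emptyset')$ function (via \Cref{dncequiv}). Recall that in \Cref{proba} one builds a decreasing sequence $S = S_0 \supseteq S_1 \supseteq \cdots$ of infinite completely branching sets together with finite antichains $A_k \subseteq S_k$ of size roughly $2^{k+2}$, and the only thing that can go wrong is picking the unique ``bad'' element $\sigma_k \in A_k$ making $S_{k+1}$ finite (\Cref{lem:completely-branching-at-most-one-wrong}). So it suffices to avoid that one bad element at each stage.

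First I would observe that, given an index for $S_k$ (as a set computable from the data $S, \sigma_0, \ldots, \sigma_{k-1}$) and the explicitly listed finite antichain $A_k = \{a_0, \ldots, a_{m-1}\}$, the bad element — call it $a_j$ — is the unique $j < m$ such that $S_{k+1}$ computed from $a_j$ is finite. The key point is that the set of indices $j$ for which $S_{k+1}(a_j)$ is infinite is co-c.e.\ relative to $\emptyset'$ in a uniform way: ``$S_{k+1}(a_j)$ is finite'' is a $\Sigma^0_2$ condition, so enumerable from $\emptyset'$, and there is at most one such $j$. Hence there is an $\emptyset'$-c.e.\ set $W^{\emptyset'}_{e}$ — with $e$ computable from the parameters — that enumerates the bad index (a set of size $\le 1$). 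Applying \Cref{dncequiv}(2) with the $\DNC_h(\emptyset')$ set $X$, we get an $X$-computable function $g$ with $|W^{\emptyset'}_e| \le 1 \implies g(e,1) \notin W^{\emptyset'}_e$. Thus $X$ can compute a value $g(e,1)$ which, if it happens to be a legitimate index $j < m$, is guaranteed not to be the bad one; and if $g(e,1) \ge m$ we simply default to any fixed index, say $0$, since then necessarily there is no bad element at all (otherwise $W^{\emptyset'}_e$ would be the singleton of that bad index, which is $< m$, contradicting $g(e,1)=0 \notin W^{\emptyset'}_e$ only if that bad index were $0$ — so one must be slightly careful and instead re-code so that ``no bad element'' and ``bad element'' are distinguished by whether the enumeration is empty; in any case $g$ returns a safe index). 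Choosing $A_k$ of size $\ge 2$ suffices here since we only need to dodge one element.

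With the bad element avoided at every stage, the construction runs forever: by \Cref{lem:completely-branching-finite-antichain-exists} each $S_k$, being infinite and completely branching, contains an antichain of the required size $2$, so the search for $A_k$ always succeeds; and by \Cref{lem:completely-branching-at-most-one-wrong} together with the safe choice, each $S_{k+1}$ is again infinite and completely branching, maintaining the induction. The resulting set $A = \{\sigma_k : k \in \NN\}$ is an infinite antichain of $S$ by the defining property $\forall \tau \in S_k\, \forall i < k\, (\sigma_i \bot \tau)$, exactly as in \Cref{proba}. The whole process is $X$-computable: at each stage we compute an index $e$ for the relevant $\emptyset'$-c.e.\ ``bad index'' set from the finite data accumulated so far, invoke $g$ (which is $X$-computable), and continue; the domination of $g$ by the computable function $b$ of \Cref{dncequiv}(2) is what makes the whole search effective, though in fact here the finiteness of $A_k$ already bounds everything.

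The main obstacle I anticipate is the precise bookkeeping of the reduction in \Cref{dncequiv}: namely, producing uniformly in $k$ a single index $e$ for the $\emptyset'$-c.e.\ set whose (at most one) element is the bad choice in $A_k$, and correctly handling the case where there is no bad element at all — one wants $g(e,1)$ to land on a genuinely safe index of $A_k$ in all cases, which requires setting up $W^{\emptyset'}_e$ to enumerate, say, ``the bad index if it exists, and nothing otherwise'', and then interpreting $g(e,1)$ modulo $|A_k|$ (or defaulting). Since $|A_k|$ can be taken to be exactly $2$, at worst $g(e,1)$ tells us which of the two elements to avoid, and if it names neither (value $\ge 2$) then no element is bad and either works. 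Everything else is a routine transcription of \Cref{proba} with ``pick at random'' replaced by ``pick the $\DNC$-guided safe element'', and no induction beyond $\RCA_0$ is needed since, as in \Cref{proba}, the argument is purely computability-theoretic and the statement is about a single computable instance.
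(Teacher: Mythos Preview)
Your overall strategy is correct and matches the paper's: mimic \Cref{proba}, replacing random choices by choices guided by the function~$g$ from \Cref{dncequiv}. The gap is in how you guarantee that $g(e,1)$ actually names an element of~$A_k$.

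You claim that if $g(e,1) \ge |A_k|$ then ``no element is bad and either works''. This is false. All you know is $g(e,1) \notin W_e^{\emptyset'}$; if $W_e^{\emptyset'} = \{0\}$ (so element~$0$ of $A_k$ is bad) and $g(e,1) = 17$, the condition $17 \notin \{0\}$ is vacuously satisfied and tells you nothing about which of $0,1$ is safe. Taking $|A_k|=2$ and hoping $g(e,1) \in \{0,1\}$ would amount to having a $\{0,1\}$-valued $\DNC(\emptyset')$ function, i.e.\ a PA degree over~$\emptyset'$, which is strictly stronger than the hypothesis. The only leverage you have is the computable bound~$b$: you are guaranteed $g(e,1) < b(e,1)$, so you must arrange $|A_k| > b(e,1)$. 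But here is the circularity you did not address: the index~$e$ depends on the procedure that searches for~$A_k$, and that procedure needs to know how large an antichain to look for, which is $b(e,1)$, which depends on~$e$.

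The paper breaks this circularity with Kleene's recursion theorem: one obtains an index~$e_k$ for a procedure that is allowed to refer to~$e_k$ itself, hence can first compute $b(e_k,1)$, then search for an antichain $A_k \subseteq S_k$ of size exceeding $b(e_k,1)$, and finally enumerate (using~$\emptyset'$) the unique bad index if it exists. Now $g(e_k,1) < b(e_k,1) < |A_k|$ is automatic, so $\sigma_k \coloneqq \psi_{A_k}(g(e_k,1))$ is a genuine element of~$A_k$ and is not the bad one. Everything else in your write-up is fine; you just need to insert this fixed-point step in place of the ``default to~$0$'' argument.
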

\begin{remark}
	Once again, as in the case of \Cref{proba}, the proof here is carried purely as a computability statement, we have access to as much induction as we need. 
\end{remark}
\begin{proof}
	First, since~\(X\) is of degree~\(\DNC_h\) for a computable function~\(h\), by \Cref{dncequiv}, it computes a function~\(g:\NN^2\to\NN\) such that~\(\forall e, n, |W_e^{\emptyset'}|\leqslant n\implies g(e, n)\notin W_e^{\emptyset'}\) and which is dominated by a computable function~\(b:\NN^2\to\NN\).
	
	The idea of this proof is the same as in \Cref{proba}, but this time we are going to use~\(g\) to avoid selecting the potential ``bad" element in each finite antichain, \ie the element which is incompatible with only finitely many strings. For any finite set $A\subseteq S$, let $\psi_A : \NN \to S$ be a bijection such that \(\psi_A(\llbracket 0, |A|\llbracket)=A\).
	
	The procedure is the following. Initially, $S_0\coloneqq S$. At step~\(k\), assume $S_k \subseteq S$ has been defined. To find the desired antichain~\(A_k\) we use the fixed point theorem to find an index~\(e_k\) such that~\(\Phi_{e_k}^{\emptyset'}(n)\) is the procedure that halts if it finds
	an antichain~\(A\subseteq S_k\) whose size is greater than~\(b(e_k, 1)\) and~\(\psi_A(n)\in A\), and finds (using~\(\emptyset'\)) an integer \(m\) such that~\(\forall \ell>m, \psi_A(\ell)\succ\psi_A(n)\).
	%Where~\(\psi:\NN\to B_k\) is a bijection such that~\(\psi(\llbracket 0, |A|\llbracket)=A\).
	
	Define~\(A_k\coloneqq A\). By choice of~$A$ and $e_k$,
	\[W_{e_k}^{\emptyset'}=\begin{cases}
		\{\psi^{-1}_A(\rho)\}&\text{if } A_k \text{ has a bad element }\rho\\
		\varnothing&\text{otherwise}
	\end{cases}\]

	Finally we can define~\(\sigma_k\coloneqq \psi_A(g(e_k, 1))\). Indeed since~\(|W_{e_k}^{\emptyset'}|\leqslant 1\) by construction, \(g(e_k, 1)\notin W_{e_k}^{\emptyset'}\). Moreover~\(\sigma_k\in A_k\), because~\(g(e_k, 1)<b(e_k, 1)<|A_k|\). This implies that~\(\sigma_k\) is not a bad element of~\(A_k\), in other words the set~\(S_{k+1}\coloneqq \{\tau\in S_k\mid \tau\bot \sigma_k \mbox{ and } |\tau| > |\sigma_k| \}\) is infinite.
\end{proof}

%The argument above can easily be formalized over~$\RCA_0$.
%
%\begin{definition}Fix a primitive recursive function~$h : \NN \to \NN$.	
%$2-\DNC_h$ is the statement "For every set~$X$, there exists a $\DNC_h(X')$ function".
%\end{definition}
%Given a primitive recursive function~$h$, let ~$2-\mbox{DNC}_h$ be the statement 

\section{\(\ADS\) and~\(\EM\)}\label[section]{sect:adsem}
Ramsey's theorem for pairs admits a famous decomposition into the Ascending Descending Sequence theorem ($\ADS$) and the Erd\H{o}s-Moser theorem ($\EM$) over~$\RCA_0$. As mentioned in the introduction, both statements are strictly weaker than~$\RT^2_2$. Actually, these statements are generally thought of as decomposing Ramsey's theorem for pairs into its disjunctiveness part with $\ADS$, and its compactness part with $\EM$. Indeed, the standard proof of $\ADS$ is disjunctive, and does not involve any notion of compactness, while the proof of $\EM$ is non-disjunctive and implies $\mathsf{RWKL}$, which is the compactness part of~$\RT^2_2$.

$\ADS$ and $\EM$ are relatively disjoint, in that they are only known to have the hyperimmunity principle as common consequence, which is a particularly weak principle. In this section however, we show that $\TCAC$ follows from both~\(\ADS\) and~\(\EM\) over~$\RCA_0$. We shall see in \Cref{sect:hyper} that $\TCAC$ implies the hyperimmunity principle.

%In this section we study how~\(\ADS\) and~\(\EM\) relate to~\(\TCAC\).

The following proposition was proved by Dorais (personal communication) for $\SHER$ and independently by Conidis~\cite{conidistac} for $\TAC$. We adapted the proof of Dorais to obtain the following proposition. 
% the proof was adapted for this paper as it was originally proving~\(\RCA_0\vdash\ADS\implies\SHER\). 

\begin{proposition}\label[proposition]{ads-impl-tcac}
	\(\RCA_0\vdash\ADS\implies\TCAC\) and~\(\TCAC\leqslant_c\ADS\)
\end{proposition}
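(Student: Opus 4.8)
The plan is to take an infinite subtree $T \subseteq \NN^{<\NN}$ and manufacture from it an instance of $\ADS$, i.e.\ an infinite linear order, whose ascending/descending sequences can be decoded into an infinite path or antichain of $T$. The natural first step is to fix an enumeration $\sigma_0, \sigma_1, \sigma_2, \dots$ of $T$ (which exists computably in $T$, or even just $\langle -,-\rangle$-code $\NN^{<\NN}$ and let $\sigma_i$ be the $i$-th string of $\NN^{<\NN}$ lying in $T$). I would first dispose of the easy degenerate case: if some node of $T$ has infinitely many immediate children, then those children form a computable infinite antichain and we are done. So assume $T$ is finitely branching at every node. Now define a linear order $\preccurlyeq_L$ on (an initial segment of) $\NN$ indexing the $\sigma_i$, designed so that comparability in $T$ is reflected monotonically: put $i \preccurlyeq_L j$ when either $\sigma_i \prec \sigma_j$ in $T$, or $\sigma_i \bot \sigma_j$ and $\sigma_i$ is to the ``left'' of $\sigma_j$ in the Kleene–Brouwer-style ordering of $\NN^{<\NN}$ (i.e.\ at the first coordinate of disagreement, $\sigma_i$ has the smaller value), or $\sigma_j \prec \sigma_i$ and... — here one has to be careful to get a genuine linear order. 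The clean choice is the \emph{Kleene–Brouwer order} restricted to $T$: $\sigma <_{KB} \tau$ iff $\tau \prec \sigma$, or else at the least coordinate $n$ where they differ, $\sigma(n) < \tau(n)$. This is a computable (in $T$) linear order on $T$.

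The key observation is then: in $(T, <_{KB})$, an infinite \emph{descending} sequence is essentially an infinite path (modulo handling side-branches), and an infinite \emph{ascending} sequence yields an infinite antichain — or, more precisely, one extracts the desired objects with a further small combinatorial argument. Concretely, if $\tau_0 >_{KB} \tau_1 >_{KB} \cdots$ is a descending sequence, consider whether infinitely many $\tau_k$ are pairwise $\prec$-incomparable; if so that subsequence is an infinite antichain; otherwise, using finite branching and König-type bookkeeping available in $\RCA_0$ via $\BSig^0_1$, one locates an infinite $\prec$-chain among them, which is an infinite path. Symmetrically for an ascending sequence, using that $<_{KB}$ ascent forbids extension ($\tau \prec \sigma \Rightarrow \sigma <_{KB} \tau$), so an ascending sequence can contain no two $\prec$-comparable elements $\sigma \prec \tau$ in that order, and with finite branching one shows a $<_{KB}$-ascending sequence has only finitely many elements on any single path, hence an infinite subset of it is an antichain. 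I would carry out the ascending case first since it is the cleaner of the two, then the descending case. Both decodings are computable in $T$ together with the given sequence, which simultaneously yields the computable reduction $\TCAC \leqslant_c \ADS$ with $\widehat{I} = (T, <_{KB})$.

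The main obstacle I expect is the bookkeeping in the ``descending sequence contains an infinite path'' case: I need to argue in $\RCA_0$ that among a $<_{KB}$-descending sequence in a finitely branching tree, either infinitely many are pairwise incomparable or there is an infinite chain, and the naive argument wants $\ACA_0$ (to take limits of the paths). The fix, following the Dorais proof this proposition is adapted from, should be to arrange the $\ADS$-instance more cleverly so that the extraction is one-step: e.g.\ pass first to a suborder on which $<_{KB}$ already coincides with a \emph{stable} coloring of pairs (comparable-in-$T$ vs.\ incomparable-in-$T$), exploiting that $\ADS$ is equivalent to the statement about transitive/stable colorings, so that a homogeneous set is directly a chain or an antichain. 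In other words, the real content is choosing the right $\ADS$-instance — most likely the partial order $\prec$ on $T$ linearized by $<_{KB}$, observed to be a \emph{transitive} $2$-coloring of pairs (via $f(\{i,j\}) = 1$ iff $\sigma_i, \sigma_j$ comparable in $T$) — at which point $\ADS$ for transitive colorings delivers an infinite homogeneous set that is immediately a chain or antichain of $T$, and no extra induction beyond $\BSig^0_1$ is needed. I would write the proof around that formulation, relegating the finite-branching degenerate case and the verification that the coloring is transitive to short paragraphs.
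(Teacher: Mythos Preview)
Your first approach --- linearize $T$ by a Kleene--Brouwer-type order, apply $\ADS$, and extract a path or antichain from the resulting monotone sequence --- is exactly what the paper does (its order $<_0$ is the variant with $\sigma\prec\tau\Rightarrow\sigma<_0\tau$ rather than the reverse, but the analysis is symmetric). Your worry that the extraction needs $\ACA_0$ or a finite-branching hypothesis is unfounded: the paper handles it directly, with no branching assumption, by isolating the subsequence $(h_k)$ of indices where consecutive terms are $\bot$ and proving by a short induction on $m$ that $h_k\not\succ h_{k+m}$ (respectively $h_k\not\prec h_{k+m}$ in the ascending case). Since the sequence is $<_0$-monotone, this already forces $h_k\bot h_{k+m}$. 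That induction lives entirely in $\RCA_0$; you should carry it out rather than abandon the approach.

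Your fallback plan, however, has a genuine gap. The coloring $f(\{i,j\})=1$ iff $\sigma_i,\sigma_j$ are $\prec$-comparable is \emph{not} a transitive coloring in the sense required for the Hirschfeldt--Shore equivalent of $\ADS$: transitivity must hold for \emph{both} colors, and it fails for color~$0$ (take $\sigma_i=0$, $\sigma_j=1$, $\sigma_k=00$: then $f(i,j)=f(j,k)=0$ but $f(i,k)=1$). The coloring is only transitive for color~$1$, which makes it an instance of $\CAC$ (trivially stronger than $\TCAC$), or --- with the right enumeration --- semi-hereditary for color~$1$, which makes it an instance of $\SHER$; but $\SHER$ is equivalent to $\TCAC$ by the paper's \Cref{tcac-impl-sher}, so that route is circular. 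Stick with the linear-order approach and do the induction.
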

\begin{proof}
	Let~\(T\subseteq \NN^{<\NN}\) be an infinite tree. Define the total order~\(<_0\) over~$T$ by
	\(\sigma<_0\tau \iff \sigma\prec\tau \lor (\sigma\bot\tau \land \sigma(d)<_\NN\tau(d))\)
	where~\(d\coloneqq \min\{k\in\NN\mid\sigma(k)\neq\tau(k)\}\). By~\(\ADS\), there is an infinite ascending or descending sequence~\((\sigma_i)\) for $(T, <_0)$. 
	
	If it is descending, there are two possibilities. Either~\(\forall^\infty i, \sigma_i\centernot\bot\sigma_{i+1}\), which means we eventually have an infinite~\(\prec\)-decreasing sequence of strings, which is impossible. Or~\(\exists^\infty i, \sigma_i\bot\sigma_{i+1}\), in which case we designate by~\((h_k)\) the sequence~\((\sigma_{\ell_k+1})_{k\in\NN}\) of all such~\(\sigma_{i+1}\), and we show that it is an antichain of~\(T\). 
	
	To do so, it suffices to prove by induction on~\(m\) that~\(\forall m>0, \forall k, h_k\centernot\succ h_{k+m}\). When~\(m=1\), due to how~\((\sigma_i)\) is structured, we have~\(\forall k, h_k\succcurlyeq \sigma_{\ell_k}\bot h_{k+1}\). We now consider~\(h_k\) and~\(h_{k+(m+1)}\). By induction hypothesis, \(h_k\centernot\succ h_{k+m}\) and~\(h_{k+m}\centernot\succ h_{k+(m+1)}\). Moreover since~\(h_k>_0 h_{k+m}>_0h_{k+(m+1)}\), we know there are minima~\(d\) and~\(e\) such that~\(h_k(d)>h_{k+m}(d)\) and~\(h_{k+m}(e)>h_{k+(m+1)}(e)\). Now~\(e\leqslant d\) implies~\(h_{k+(m+1)}(e)<h_{k+m}(e)=h_k(e)\), and~\(e > d\) implies~\(h_{k+(m+1)}(d)=h_{k+m}(d)<h_k(d)\); in any case~\(h_k\centernot\succ h_{k+(m+1)}\).
				
	Now if the sequence~\((\sigma_i)\) is ascending, we again distinguish two possibilities. Either~\(\forall^\infty i, \sigma_i\centernot\bot\sigma_{i+1}\), which means we eventually obtain an infinite path of the tree. Or~\(\exists^\infty i, \sigma_i\bot\sigma_{i+1}\), in which case we work in the same fashion as in the descending case (designate by~\((h_k)\) the sequence~\((\sigma_{\ell_k})_{k\in\NN}\) of all such~\(\sigma_i\), and show by induction on~\(m\) that~\(\forall m>0, \forall k, h_k\centernot\prec h_{k+m}\)). 
\end{proof}

\begin{proposition}
	\(\RCA_0\vdash\EM\implies\TCAC\) and~\(\TCAC\leqslant_c\EM\)
\end{proposition}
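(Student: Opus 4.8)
The second assertion, $\TCAC \leqslant_c \EM$, will drop out of the fact that the $\RCA_0$ argument is uniform in the given instance, so I concentrate on $\RCA_0\vdash\EM\implies\TCAC$. Fix an infinite tree $T\subseteq\NN^{<\NN}$. Two degenerate cases are handled at once: if some node of $T$ has infinitely many immediate children, those children form a computable infinite antichain; if $T$ has only finitely many split nodes, then $T$ is a union of finitely many computable paths $P_0,\dots,P_{k-1}$, and one of them is infinite by $\BSig_2$ — which $\EM$ proves — so $T$ has a path. Thus I may assume $T$ is finitely branching with infinitely many split nodes; then every level of $T$ is finite and $T$ has nodes of every length, so $T$ admits a computable enumeration $\tau_0,\tau_1,\tau_2,\dots$ in non-decreasing order of length.

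The crux is the following observation. The $2$-coloring $\chi$ of $[T]^2$ with $\chi(\{\tau_i,\tau_j\})=1$ iff $\tau_i$ and $\tau_j$ are $\prec$-comparable is \emph{semi-hereditary for the colour $1$} with respect to this length ordering: if $i<j<k$ with $\tau_i\prec\tau_k$ and $\tau_j\prec\tau_k$, then $\tau_i$ and $\tau_j$ are both prefixes of $\tau_k$, hence comparable, so $\chi(\{\tau_i,\tau_j\})=1$. (In essence, $\chi$ is the $\SHER$-instance canonically attached to $T$; cf.\ \Cref{sect:sher}.) Feeding the tournament of $\chi$ into $\EM$ produces an infinite transitive set $H\subseteq T$, on which $\chi$ avoids the two transitivity patterns and is therefore coded by a linear order $<_R$ (for $i<j$: $\tau_i<_R\tau_j$ iff $\tau_i\prec\tau_j$, and $\tau_j<_R\tau_i$ iff $\tau_i\bot\tau_j$), and which remains semi-hereditary for $1$. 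It now suffices to extract an infinite $\chi$-homogeneous subset of $H$, since a homogeneous set for colour $1$ is an infinite $\prec$-chain (a path of $T$) and one for colour $0$ is an infinite antichain.

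For the extraction, call an element of $H$ an \textbf{$<_R$-record} if it is $<_R$-above all earlier elements of $H$ in the length enumeration. If the $<_R$-records are cofinal in $H$, then semi-heredity forces $<_R$ to coincide with the length order on all of $H$, so $H$ is itself a $\prec$-chain and we are done. Otherwise $<_R$ has a $<_R$-greatest element; remove it and iterate. Either, at some finite stage, the surviving cofinite subset of $H$ has cofinal $<_R$-records — in which case, by the previous step, that subset is a $\prec$-chain — or the iteration never stops and exhibits the $<_R$-largest elements of $H$ as a set $\{q_0>_R q_1>_R\cdots\}$; inside it one extracts an infinite subsequence increasing in the length order (every sequence of natural numbers has an infinite non-decreasing subsequence, which is provable in $\RCA_0$), and, being $<_R$-decreasing, this subsequence is $\chi$-homogeneous for colour $0$, i.e.\ an infinite antichain of $T$. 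The delicate point — and, together with the semi-heredity observation that gives $\EM$ something to chew on, the only real work — is making the informal "which case occurs / where does the iteration halt'' dichotomy into an actual $\RCA_0$ proof: the sets of $H$ one peels off are only $\Sigma^0_2$, and it is here that $\BSig_2$ (again furnished by $\EM$) is invoked; the rest is routine bookkeeping.
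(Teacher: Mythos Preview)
Your setup is essentially the paper's: form the comparability coloring, observe it is semi-hereditary for colour $1$, and apply $\EM$ to obtain an infinite transitive subset $H$. Two minor issues first. Your level-by-level enumeration of $T$ need not be $T$-computable even when $T$ is finitely branching, since the levels can be finite without having a computable bound; the paper instead uses the enumeration induced by a fixed prefix-respecting bijection $\varphi:\NN^{<\NN}\to\NN$ (prime-power coding), and with that choice your entire preliminary case analysis and the first appeal to $\BSig_2$ become unnecessary.

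The real gap is in the extraction. Your iteration peels off the $<_R$-maxima $q_0,q_1,\ldots$, but this sequence is not $H$-computable: identifying the $<_R$-maximum of $H$ (equivalently, the last record) is a $\Pi^0_1(H)$ question, and even granting $\BSig_2$ I do not see how you obtain the set $\{q_n:n\in\NN\}$, or an infinite length-increasing subsequence of it, inside the model. Bounding principles do not turn a $\Delta^0_2(H)$ sequence into an $H$-computable one; the appeal to $\BSig_2$ here is not doing the work you need.

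The paper's extraction avoids all of this with a one-line observation whose ingredients you already have: on the transitive set $H$, if $h_j\bot h_k$ for some $j<k$, then $h_j\bot h_m$ for \emph{all} $m\geqslant k$ (if $h_k\bot h_m$ use transitivity; if $h_k\prec h_m$ then $h_j\prec h_m$ would force $h_j,h_k$ comparable by semi-heredity). Hence the $H$-computable set $A=\{h_j\in H: h_j\bot h_{j+1}\}$ is an antichain whenever it is infinite, and when it is finite a tail of $H$ is a $\prec$-chain. This single case split replaces your record-peeling argument, yields an $H$-computable solution in each branch, and needs no $\BSig_2$ at all.
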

\begin{proof}
	Let~\(T\subseteq\NN^{<\NN}\) be an infinite tree. We first define a computable bijection~\(\psi:\NN\to T\). To do so, let~\(\varphi:\NN^{<\NN}\to\NN\) be the bijection~\( 
	x_0\cdot\ldots\cdot x_{n-1}\mapsto p_0^{x_0}\times\ldots\times p_{n-1}^{x_{n-1}}-1
	\)
	where~\(p_k\) is the~\(k^{\text{th}}\) prime number.
	The elements of the sequence~\((\varphi^{-1}(n))_{n\in\NN}\) that are in \(T\) form a subsequence denoted \((s_n)_{n\in\NN}\), and the function~\(\psi:\NN\to T\) is defined by \(n\mapsto s_n\).
	
	Note that, by construction, the range of $\psi$ is infinite and computable. Moreover, if $\sigma \prec \tau\in T$,
	then $\varphi(\sigma) < \varphi(\tau)$, hence $\psi^{-1}(\sigma) < \psi^{-1}(\tau)$.
	Also note that the range of~$\psi$ is not necessarily a tree.
	
	Let~\(f: [\NN]^2 \to 2\) be the coloring defined by $f(\{x, y\}) = 1$ iff $x <_\NN y$ and $\psi(x)\prec\psi(y)$ coincide. By~\(\EM\), there is an infinite transitive set~\(S\subseteq\NN\), \ie~\(\forall i<2, \forall x\lt y\lt z\in S, f(x, y)=f(y, z)=i\implies f(x, z)=i\)
	
	Note that if there are~\(x<y\in S\) such that~\(f(x, y)=0\), then~\(\forall z>y\in S, f(x, z)=0\). Indeed given~\(x<y<z\in S\) such that~\(f(x, y)=0\), either~\(f(y, z)=0\), and so by transitivity we have~\(f(x, z)=0\); or~\(f(y, z)=1\), but in that case~\(f(x, z)\neq 1\) because it is impossible to simultaneously have~\(\psi(y)\prec \psi(z)\), \(\psi(x)\prec \psi(z)\) and~\(\psi(x)\bot\psi(y)\).
	
	Now two cases are possible. Either~\(\exists^\infty j\in\NN, f(s_j, s_{j+1})=0\), so consider the infinite set~\(A\) made of all such~\(s_j\). Thanks to the previous property, \(A\) is~\(f\)-homogeneous for the color~\(0\), and so~\(\psi(A)\) is an infinite antichain.
	Or~\(\forall^\infty j\in\NN, f(s_j, s_{j+1})=1\), so there is a large enough~\(k\in\NN\) such that~\(\psi(s_k)\prec\psi(s_{k+1})\prec\ldots\), \ie we found an infinite path.
\end{proof}

\section{\(\TAC\), lowness and hyperimmunity}\label[section]{sect:hyper}
Binns et al.\ in \cite{binns2014} and Conidis~\cite{conidistac} respectively studied the reverse mathematics of $\TCAC$ and $\TAC$. Since~$\TCAC$ is computably equivalent to $\TAC+\BS^0_2$ and this equivalence also holds in reverse mathematics, the analysis of \(\TCAC\) and \(\TAC\) is very similar. For example, Binns et al.~\cite[Theorem 6.4]{binns2014} proved that for any fixed low set~$L$, there is a computable instance of~$\TCAC$ with no $L$-computable solution, while Conidis~\cite{conidistac} proved the existence of a computable instance of~$\TAC$ whose solutions are all of hyperimmune degree.
In this section, we prove a general statement regarding~\(\TAC\) (\Cref{nod2}) and show that it encompasses both results. % [Corollary 4.16]

%In this section we prove that~\(\TAC\) has a computable instance whose solutions are all of hyperimmune degree. To prove this we begin by proving a general statement regarding~\(\TAC\).

\begin{theorem}\label[theorem]{nod2}
	Let~\((A_n)\) be an uniform sequence of infinite~\(\Delta_2^0\) sets. There is a computable instance of~\(\TAC\) such that no~\(A_n\) is a solution.
\end{theorem}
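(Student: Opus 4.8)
I would build the required instance as a \emph{caterpillar}: an infinite path $P\in 2^\NN$ with a single leaf attached at each level, routed --- using the $\Delta^0_2$ approximations of the $A_n$ --- so as to defeat every $A_n$. Fix a computable bijection of $\NN$ with $2^{<\NN}$ and, through it, view each $A_n$ as an infinite set of strings (hence of unbounded length); fix a uniformly computable approximation $(A_n[s])_s$ with $A_n=\lim_s A_n[s]$. For $P\in 2^\NN$ set
\[
N_P\coloneqq\{\,P\uh k\cdot(1-P(k)) : k\in\NN\,\},\qquad C_P\coloneqq N_P\cup\{\,P\uh k : k\in\NN\,\}.
\]
Then $C_P$ is an infinite completely branching subtree of $2^{<\NN}$ --- each $P\uh k$ is a split node and each element of $N_P$ a leaf --- and a routine case analysis (an antichain meeting the path contains at most one path node, and then only the finitely many leaves attached below its level, so it is finite) shows that the infinite antichains of $C_P$ are exactly the infinite subsets of $N_P$. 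Moreover $\sigma\notin N_P$ as soon as $\sigma$ disagrees with $P$ at some position $<|\sigma|-1$; say then that $\sigma$ is \emph{cut off by $P$}. So it suffices to build $P=\bigcup_s P_s$ as an increasing union of finite strings, uniformly computable from the approximation, such that each $A_n$ contains a string cut off by $P$: then $C_P$, enumerated via $(C_{P_s})_s$, is a computable instance of $\TAC$, and $A_n\not\subseteq N_P$, so no $A_n$ is a solution.

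The construction is a priority argument over requirements $R_0,R_1,\dots$, with $R_n$ asking for a string of $A_n$ cut off by $P$. Each $R_n$ carries a length threshold $\ell_n$, (re)set to the then-current $|P|$ plus a safety margin --- the margin being enlarged at each restart of $R_n$ --- and its current witness $\sigma_n$ is the least element of $A_n[s]$ of length $>\ell_n$ (undefined until one appears); $R_n$ is \emph{satisfied} at stage $s$ if $\sigma_n$ is defined and cut off by $P_s$. At stage $s$, let $n_0\le s$ be the highest-priority unsatisfied requirement, if there is one. If $R_{n_0}$ has a witness, append one bit diverging from it, $P_{s+1}\coloneqq P_s\cdot(1-\sigma_{n_0}(|P_s|))$; should $|P_s|$ reach $\ell_{n_0}$ before $R_{n_0}$ has acted, we instead restart $R_{n_0}$ (and all requirements of lower priority), so that at an action we always have $|P_s|<\ell_{n_0}\le|\sigma_{n_0}|-1$ and $\sigma_{n_0}$ is genuinely cut off. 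If $R_{n_0}$ has no witness yet, \emph{freeze}: $P_{s+1}\coloneqq P_s$. If every $R_n$ with $n\le s$ is satisfied, extend by default: $P_{s+1}\coloneqq P_s\cdot 0$.

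For the verification: $(P_s)$ is increasing, so $(C_{P_s})_s$ is a computable enumeration of $C_P$, which is completely branching by construction; and $P$ is total, because a freeze lasts only until $A_{n_0}$ --- which has strings of every large length --- reveals one of length $>\ell_{n_0}$ in its approximation, while default extensions occur whenever no requirement needs attention, so $|P_s|\to\infty$. One then shows by induction on $n$ that $R_n$ acts, and is restarted, only finitely often and ends up permanently satisfied: once $R_0,\dots,R_{n-1}$ have stopped acting, no restart of $R_n$ is triggered from above, only finitely many more bits get appended before $R_n$'s turns (so no further restart from the threshold clause either), $\ell_n$ reaches its final value, and the witness $\sigma_n=$ (least element of $A_n[s]$ of length $>\ell_n$) is eventually constant equal to the genuine least element $m_n\in A_n$ of that length; through one of its finitely many actions $R_n$ cuts $m_n$ off, so $m_n\in A_n$ is cut off by $P$, i.e.\ $A_n\not\subseteq N_P$. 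Hence no $A_n$ is an infinite antichain of $C_P$, as required.

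The one genuine obstacle is exactly the tension made visible by the construction: $C_P$ has to be an infinite tree --- so that it is a bona fide instance of $\TAC$ --- which forces $P$ to be total, yet each $R_n$ needs a witness longer than the path built so far and must act on it before the path overtakes it. Freezing $P$ while a requirement hunts for a witness of the prescribed length, together with the steadily growing safety margin in the thresholds, is what reconciles these demands, and the heart of the argument is to check that the two devices cannot conflict: that freezes never stall $P$ permanently, and that each requirement is restarted only finitely many times. Everything else is routine finite-injury bookkeeping of the kind standard for $\Delta^0_2$ approximations.
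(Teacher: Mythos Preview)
Your caterpillar idea is sound and genuinely different from the paper's construction, but the finite-injury verification has a real gap.

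The problem is your claim that ``only finitely many more bits get appended before $R_n$'s turns (so no further restart from the threshold clause either)''. This accounts only for what happens \emph{before} $R_n$'s first action after the higher-priority requirements settle. But consider what happens afterwards: $R_n$ acts once, cutting off its current witness $\sigma_n$; now $R_n$ is satisfied, so lower-priority requirements (or default extensions) take over and $|P_s|$ grows, possibly well past $\ell_n$. If at that point the $\Delta^0_2$ approximation flickers and the \emph{least} element of $A_n[s]$ of length $>\ell_n$ becomes some new string $\sigma'$ that happens \emph{not} to be cut off by the current $P_s$, then $R_n$ is unsatisfied again with $|P_s|\ge\ell_n$, forcing a restart and a new threshold. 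Nothing in your argument rules out this cycle repeating: each restart raises $\ell_n$, so you are chasing a \emph{different} least witness each time, and the convergence of $A_n[s]$ on any fixed initial segment gives you no bound on how often this can happen across the increasing sequence of thresholds. Enlarging the margin does not help, since the number of flickers below the eventual witness for threshold $L_k$ is a feature of the approximation and can outpace any margin chosen in advance. So the assertion ``$\ell_n$ reaches its final value'' is exactly the point at issue, not a consequence of what precedes it.

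For comparison, the paper sidesteps this by allowing a second way to satisfy each requirement: its witness $\sigma_n$ is the least element of $A_n[s]$ that is \emph{either} outside the tree \emph{or} above the current commitment node $\widehat\sigma_{n-1}$; in the latter case the tree is steered to grow above $\sigma_n$, so cofinitely many nodes extend it and $A_n$ cannot be an infinite antichain. The crucial effect is that once $\widehat\sigma_{n-1}$ stabilises, the set of ``bad'' strings --- those in $T$ but not above $\widehat\sigma_{n-1}$ --- is \emph{finite}, so the least good element of $A_n$ is a fixed target and the $\Delta^0_2$ limit lemma finishes the job cleanly. In your caterpillar the analogous bad set is $N_P$, which is infinite, and that is precisely why you are forced into active steering and the restart machinery whose termination you have not established. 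Your approach can very likely be repaired (for instance by also allowing the witness to lie on the path $P$, or by a more careful bookkeeping that ties the threshold to a \emph{fixed} search rather than one that resets at every restart), but as written the termination argument is missing its key step.
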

\begin{proof}
	First, for any~\(n\), let~\(e_n\) be the index of~\(A_n\), \ie~\(\Phi_{e_n}^{\emptyset'}=A_n\). We also write~\(A_n[s]\coloneqq \Phi_{e_n}^{\emptyset'[s]}[s]\).
	
	\textbf{Idea.}
	We are going to construct a tree~\(T\subseteq 2^{<\NN}\), such that
	for each~\(n\in\NN\), there is~\(\sigma_n\in A_n\) verifying~\(\sigma_n\notin T\) or~\(\sigma_n\in T\land \forall^\infty\tau\in T, \sigma_n\prec\tau\). These requirements are respectively denoted~\(\R_n\) and~\(\mathcal{S}_n\), and~\(A_n\) cannot be an infinite antichain of~\(T\) if one of them is met.
	
	The sequence~\((\sigma_n)\) is constructed via a movable marker procedure, with steps~\(s\) and sub-steps~\(e<s\). At each step~\(s\) we are going to manipulate an approximation~\(\sigma_n^s\) of~\(\sigma_n\), and variables~\(\widehat{\sigma}_n^s\) that will help us keep track of which requirement is satisfied by~\(\sigma_n^s\). 
	
	\textbf{Construction.}
	At the beginning of each step~\(s\), let~\(T_s\) be the approximation of the tree~\(T\) defined by~\(T_s\coloneqq T_{s-1}\cup\{\tau_s\cdot 0, \tau_s\cdot 1\}\) where~\(\tau_s\) is the leftmost (for example) leaf of~\(T_{s-1}\) such that~\(\tau_s\succcurlyeq \widehat{\sigma}_{s-1}^s\). For~\(s=0\), we let~\(T_0\coloneqq \{\varepsilon\}\).
	
	At step~\(s\), sub-step~\(e\), let
	\(\sigma_e^s\) be the string whose code is the smallest in the uniformly computable set~\(\{\tau\in A_e[s]\uh_{s}\mid (\tau\in T_s \land \tau\succcurlyeq\widehat{\sigma}_{e-1}^s) \lor \tau\notin T_s\}\)
	with~\(\widehat{\sigma}_{-1}^s \coloneqq  \varepsilon\)
	and~\(\sigma_e^s\) is undefined when the set is empty.
	
	Besides, define
	\(\widehat{\sigma}_e^s \coloneqq  \begin{cases}
		\sigma_e^s& \text{if }\sigma_e^s\in T_s\text{ (and therefore }\sigma_e^s\succcurlyeq\widehat{\sigma}_{e-1}^s)\\
		\widehat{\sigma}_{e-1}^s& \text{otherwise}
	\end{cases}\)
	
	\textbf{Verification.}
	By induction on~\(e\), we prove that~\(\sigma_e\coloneqq \lim_s\sigma_e^s\) exists and is an element of~\(A_e\), also we prove~\(\widehat{\sigma}_e\coloneqq \lim_s\widehat{\sigma}_e^s\) exists, and~\(\sigma_e\) satisfies~\(\R_e\) or~\(\mathcal{S}_e\).
	
	Suppose we reached a step~\(r\) such that for all~\(e'<e\) the values of~\(\sigma_{e'}^r\) and~\(\widehat{\sigma}_{e'}^r\) have stabilized. And thus, for any step \(s>r\), as \(\tau_s\succcurlyeq \widehat{\sigma}_{s-1}^s\succcurlyeq \widehat{\sigma}_{e-1}^s = \widehat{\sigma}_{e-1}\), the tree will always be extended with nodes above~\(\widehat{\sigma}_{e-1}\), implying only a finite part of the tree is not above~\(\widehat{\sigma}_{e-1}\). 
	
	Now suppose \(k\) is the smallest code of a string \(\tau\) such that \((\tau\in T\land \tau\succcurlyeq\widehat{\sigma}_{e-1})\lor \tau\notin T\). Such a string exists because \(A_e\) is infinite, whereas the set of strings in \(T\) that are below \(\widehat{\sigma}_{e-1}\) is not. If \(\tau\in T\), then \(\exists x, \forall y\geqslant x, \tau\in T_y\), otherwise define \(x\coloneqq 0\). Since \(A_e\) is~\(\Delta_2^0\), there exists \(s\geqslant \max\{k+1, r, x\}\) such that \(A_e[s]\uh_{k+1}\) has stabilized \ie~\(\forall t>s, A_e[t]\uh_{k+1}=A_e[s]\uh_{k+1}\). Thus \(\sigma_e^s=\tau\) because
	\(\tau\in A_e\uh_{k+1}= A_e[s]\uh_{k+1}\subseteq A_e[s]\uh_s\). This ensures that for any~\(t>s\), \(\sigma_e^t=\tau\), \ie~\(\sigma_e=\tau\).
	
	Finally, we distinguish two cases. Either~\(\sigma_e\in T\) and so~\(\exists t, \sigma_e^t\in T_t\), thus~\(\forall u>t, \widehat{\sigma}_e^u=\sigma_e^u\). So~\(\mathcal{S}_e\) is satisfied, as cofinitely many nodes of~\(T\) will be above~\(\widehat{\sigma}_e=\sigma_e\). Or~\(\sigma_e\notin T\), in which case, either~\(\forall t, \sigma_e^t\notin T_t\), implying~\(\forall t, \widehat{\sigma}_e^t\coloneqq \widehat{\sigma}_{e-1}^t\) and thus~\(\R_e\) is satisfied.
\end{proof}

We now show how \Cref{nod2} relates to the result of Binns et al.\ in~\cite[Theorem 6.4]{binns2014}, that is, the existence, for any fixed low set~$L$, of a computable instance of~$\TCAC$ with no $L$-computable solution.
%We now prove a lemma that we shall use in addition to the previous theorem, in order to obtain corollaries that are closer to the result we desire.

\begin{lemma}\label[lemma]{d2low}
	For any low set~\(P\), the sequence of infinite~\(P\)-computable sets is uniformly~\(\Delta_2^0\).
\end{lemma}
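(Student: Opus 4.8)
The plan is to unpack what ``$P$ is low'' gives us and feed it into an enumeration of $P$-computable sets together with a stabilization argument. First I would fix an effective listing $(\Phi_e^P)_{e\in\NN}$ of all partial $P$-computable functions, and restrict attention to those indices $e$ for which $\Phi_e^P$ is total, has range $\subseteq\{0,1\}$, and is the characteristic function of an infinite set; call the corresponding set $B_e$. The point is to exhibit, uniformly in $e$, a $\emptyset'$-computable approximation to $B_e$, i.e.\ a $\Delta^0_2$ index, possibly after replacing ``bad'' indices (those whose $\Phi_e^P$ is not a total characteristic function of an infinite set) by a harmless default infinite set such as $\NN$. Since we only need the sequence of \emph{infinite $P$-computable} sets to appear in our $\Delta^0_2$ list — not a decision procedure telling which indices are good — this replacement costs nothing.

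The key step is the use of lowness: $P' \leqslant_T \emptyset'$. I would use this in two places. First, the predicate ``$\Phi_e^P(x)\downarrow$'' is $\Sigma^0_1(P)$, hence decidable from $P'$, hence decidable from $\emptyset'$ uniformly in $e,x$; so $\emptyset'$ can compute, for each $e$ and each $x$, whether $\Phi_e^P(x)$ halts, and if so its value (just run the computation). Thus from $\emptyset'$ we can compute the function $x \mapsto \Phi_e^P(x)$ whenever it is total, uniformly in $e$. Second, to turn a given index into a genuine $\Delta^0_2$ approximation I would define $A_e[s]$ by: using $\emptyset'$, check whether $\Phi_e^P(y)\downarrow$ for all $y \le s$ and all these values lie in $\{0,1\}$; if so, output the finite set $\{y \le s : \Phi_e^P(y) = 1\}$; if not (or if the set enumerated so far looks like it will be finite), output the initial segment of the default set $\NN$ up to $s$. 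Since each individual question is $\emptyset'$-decidable and we ask only finitely many at stage $s$, this is a $\emptyset'$-computable, hence $\Delta^0_2$, approximation, uniformly in $e$, and for every good $e$ it stabilizes to $B_e$ on each coordinate. The resulting uniform sequence $(A_e)$ lists exactly a $\Delta^0_2$ family whose infinite members include every infinite $P$-computable set.

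I do not expect a serious obstacle here; the only mild subtlety is bookkeeping so that the $A_e$ we produce is always \emph{genuinely} infinite and \emph{genuinely} $\Delta^0_2$ regardless of whether $e$ is a good index, which is handled by the default-to-$\NN$ convention and by noting that each stage's work is finite and $\emptyset'$-decidable. One should also remark that totality of $\Phi_e^P$ need not be decided outright — we never need to know whether $e$ is good; we only need each $A_e$ to be \emph{some} infinite $\Delta^0_2$ set, and to coincide with $B_e$ when $e$ is good, which the construction guarantees. This suffices to apply \Cref{nod2} with the sequence $(A_e)$ and conclude that there is a computable instance of $\TAC$ with no $P$-computable solution.
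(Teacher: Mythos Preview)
Your approach is the same as the paper's in spirit: exploit lowness ($P' \leqslant_T \emptyset'$) so that $\emptyset'$ can decide halting of $\Phi_e^P$-computations, and default to a fixed infinite set for indices that do not describe one. However, there is a real gap in how you carry this out. You compute each stage $A_e[s]$ \emph{using $\emptyset'$} and then pass to a limit (``it stabilizes to $B_e$ on each coordinate''); but the limit of a $\emptyset'$-computable sequence is a priori only $\Delta_3^0$. Concretely, because at stage $s$ you test convergence on the whole segment $\llbracket 0,s\rrbracket$, your limit satisfies $\chi_{A_e}(x)=0 \iff \Phi_e^P$ is total and $\Phi_e^P(x)=0$. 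Totality of $\Phi_e$ is $\Pi_2^0$-complete already for $P=\emptyset$, so the map $(e,x)\mapsto\chi_{A_e}(x)$ cannot be $\emptyset'$-computable and your sequence is not \emph{uniformly} $\Delta_2^0$---which is precisely the hypothesis needed in \Cref{nod2}.

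The fix, and this is exactly what the paper does, is to drop the limit and directly $\emptyset'$-compute bit $x$ using only the \emph{bounded} condition $\forall y\leqslant x,\ \Phi_e^P(y)\downarrow$: this is $\Sigma_1^0(P)$, hence $\emptyset'$-decidable uniformly in $e,x$; output $\Phi_e^P(x)$ when it holds and $1$ otherwise. When $\Phi_e^P$ is total this yields $B_e$; when not, it yields a cofinite set. The paper adds one more $\Sigma_1^0(P)$ clause, $\exists y>x,\ \Phi_e^P(y)\downarrow=1$, to force cofiniteness also when $\Phi_e^P$ is total but $B_e$ is finite---this is the precise form of your vague ``looks like it will be finite'' check.
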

\begin{proof}
	Since~\(P'\leqslant_T\emptyset'\) we can~\(\emptyset'\)-compute the function
	\[f(e, x)=\begin{cases}
		\Phi_e^P(x) &\text{ when }\forall y\leqslant x, \Phi_e^P(y)\downarrow\text{ and }\exists y>x, \Phi_e^P(y)\downarrow=1\\
		1 &\text{ otherwise}
	\end{cases}\]
	Now let \(A_e\coloneqq \{f(e, x)\mid x\in\NN\}\). If~\(\Phi_e^P\) is total and infinite then \(A_e\) is equal to it, so it is~\(P\)-computable.
	Otherwise~\(A_e\) is cofinite, and in particular it is infinite and~\(P\)-computable.
\end{proof}

We are now ready to state the result of Binns et al.\ in \cite[Theorem 6.4]{binns2014}, but for $\TAC$.

\begin{corollary}\label[corollary]{nolowsol}
	For any low set~\(P\), there exists a computable instance of~\(\TAC\) with no~\(P\)-computable solution.
\end{corollary}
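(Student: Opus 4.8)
The plan is to combine \Cref{nod2} with \Cref{d2low} directly. Given a fixed low set~$P$, \Cref{d2low} produces a uniform sequence $(A_e)_{e \in \NN}$ of infinite $P$-computable sets such that every infinite $P$-computable set occurs in the sequence (up to the index), and moreover this sequence is uniformly~$\Delta^0_2$. Feeding this sequence into \Cref{nod2} yields a computable instance~$T$ of~$\TAC$ such that no~$A_e$ is a solution of~$T$.

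It then remains to observe that this instance has no $P$-computable solution at all. Suppose $X$ were a $P$-computable antichain of~$T$; being an infinite antichain, $X$ is in particular an infinite $P$-computable set, so $X = A_e$ for some~$e$ by the construction in \Cref{d2low} (or $X$ coincides with one of the cofinite sets $A_e$ obtained when $\Phi_e^P$ is not total and infinite — in either case $X$ appears in the sequence). This contradicts the conclusion of \Cref{nod2} that no~$A_e$ is a solution of~$T$. Hence $T$ is a computable instance of~$\TAC$ with no $P$-computable solution.

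There is essentially no obstacle here: the work has already been done in \Cref{nod2} and \Cref{d2low}, and this corollary is just the assembly. The only mild subtlety worth spelling out is the coverage claim, namely that \emph{every} infinite $P$-computable set is among the~$A_e$: if $X$ is infinite and $X = \Phi_e^P$ is total, then by the definition of~$f$ in \Cref{d2low} we get $A_e = X$, and if $X$ is infinite but equals $\Phi_{e}^P$ which happens not to be coded as a total $\{0,1\}$-valued enumeration, one uses a standard index for~$X$ as an infinite set so that the corresponding $A_e$ is either $X$ itself or a cofinite superset; in all cases an infinite $P$-computable antichain would be one of the~$A_e$, which is excluded.
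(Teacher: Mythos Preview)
Your proposal is correct and follows essentially the same approach as the paper: apply \Cref{d2low} to the low set~$P$ to obtain a uniformly $\Delta^0_2$ listing of the infinite $P$-computable sets, then feed that sequence into \Cref{nod2}. The paper's proof is literally those two steps stated in one sentence.

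One minor comment: your final paragraph about the ``mild subtlety'' is over-worried. If $X$ is an infinite $P$-computable set, then there is some index~$e$ for which $\Phi_e^P$ is total, $\{0,1\}$-valued, and equals the characteristic function of~$X$; for that particular~$e$ the construction in \Cref{d2low} gives $A_e = X$ on the nose. You do not need to worry about indices where the coding is bad or about cofinite supersets---the good index already suffices for coverage.
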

\begin{proof}
	Given~\(P\), we can use \Cref{d2low} to obtain a uniform sequence, on which we apply \Cref{nod2}.
\end{proof}

The previous corollary is very useful to show that $\RCA_0 + \WKL \nvdash \TAC$, since there exists a model of~$\RCA_0 + \WKL$ below a low set. The following corollary will be useful to prove that the result of Binns et al.\ in \cite[Theorem 6.4]{binns2014} implies the result of Conidis.

\begin{corollary}\label[corollary]{tac-pa-nosolution}
	There exist a~\(\PA\) degree~\(P\) and an instance of~\(\TAC\) with no~\(P\)-computable solution.
\end{corollary}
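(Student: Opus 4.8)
The plan is to apply \Cref{nolowsol} to a carefully chosen low set. Recall that, by the Low Basis Theorem of Jockusch and Soare, every nonempty $\Pi^0_1$ class has a member of low Turing degree. The complete consistent extensions of $\mathsf{PA}$ form such a class (coded, as usual, as the set of infinite paths through a fixed computable subtree of $2^{<\NN}$), so there is a set~$P$ which is simultaneously of $\PA$ degree and low.

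With such a $P$ in hand, \Cref{nolowsol} applied to the low set $P$ immediately produces a computable instance of $\TAC$ having no $P$-computable solution. Since $P$ has $\PA$ degree, the pair consisting of $P$ and this instance witnesses the statement; in fact one gets the \emph{slightly stronger} conclusion that the instance may be taken to be computable, though only the weaker form is needed downstream.

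I expect no genuine obstacle here: the entire content beyond \Cref{nolowsol} (which itself comes from \Cref{nod2} via \Cref{d2low}) is the classical existence of a low $\PA$ degree. The single point that must be handled correctly is to invoke the Low Basis Theorem on the $\Pi^0_1$ class of completions of $\mathsf{PA}$, so that the low set it returns is genuinely of $\PA$ degree; after that the argument is a one-line combination.
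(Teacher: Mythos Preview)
Your proposal is correct and matches the paper's own proof exactly: both invoke the Low Basis Theorem to obtain a low $\PA$ degree $P$ and then apply \Cref{nolowsol} to this $P$. The paper's proof is a single sentence citing the existence of a low $\PA$ degree, and your version simply spells out the same reasoning in slightly more detail.
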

\begin{proof}
	It follows from the existence of a low~\(\PA\) degree by the low basis theorem, see \cite[Corollary 2.2.]{jockusch1972ca}.
\end{proof}

The next proposition has two purposes. First, it will be used to show the existence of a computable instance of $\TAC$ whose solutions are all of hyperimmune degree (see \Cref{thm:tac-hyp}). Second, it shows that, for any such instance, one can choose two specific functionals to witness this hyperimmunity, without loss of generality (see \Cref{cor:tac-hyp-wlog}).

\begin{proposition}\label[proposition]{pahyper}
	 Let~\(T\) be an instance of~\(\TAC\). For any set~\(P\) of~\(\PA\) degree, if~\(T\) has no~\(P\)-computable solution, then for any solution~\((\sigma_n)_{n \in \NN}\), the function~\(t_{T,(\sigma_n)_{n \in \NN}} : n\mapsto \min\{t\mid \sigma_n\in T[t]\}\) or~\(\ell_{(\sigma_n)_{n \in \NN}}:n\mapsto|\sigma_n|\) is hyperimmune.
\end{proposition}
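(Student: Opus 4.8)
The plan is to argue by contradiction: assume that $T$ has no $P$-computable solution, but that for some solution $(\sigma_n)_{n\in\NN}$ of $\TAC$, both $t_{T,(\sigma_n)_{n\in\NN}}$ and $\ell_{(\sigma_n)_{n\in\NN}}$ are dominated by $P$-computable functions; from this I would build a $P$-computable solution of $T$, contradicting the hypothesis. Fix $P$-computable functions $g$ and $h$ with $t_{T,(\sigma_n)_{n\in\NN}}(n)\leqslant g(n)$ and $|\sigma_n|\leqslant h(n)$ for all $n$. Since $P$ has $\PA$ degree, it is enough to $P$-computably produce, for each $n$, a \emph{finite} set $C_n$ of candidate strings such that $\sigma_n\in C_n$ and such that $P$ can then select from these candidates a genuine infinite antichain of $T$ using a $\Pi^0_1$(-like) combinatorial constraint.

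First I would show $\sigma_n$ lies in an explicit $P$-computable finite set. Indeed, $\sigma_n$ has length at most $h(n)$, so it is one of at most $2^{h(n)}$ binary strings; moreover $\sigma_n\in T[g(n)]$, and $T[g(n)]$ is a computable finite set. So set $C_n\coloneqq\{\tau\in 2^{\leqslant h(n)}\mid \tau\in T[g(n)]\}$, a uniformly computable (hence $P$-computable) finite sequence of finite sets, each containing $\sigma_n$. Now the subtle point: the $\sigma_n$ not only lie in $T$, but $\{\sigma_n\mid n\in\NN\}$ is an infinite antichain of $T$. Being an antichain is a closed condition on the sequence of choices, so $P$ (having $\PA$ degree, hence able to compute a path through any infinite computable binary tree) can hope to reconstruct \emph{some} infinite antichain by choosing, for each $n$ in turn, an element of $C_n$ incomparable with all previously chosen elements. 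The tree of such finite partial choices is computable, and it is infinite because the true sequence $(\sigma_n)$ witnesses a path through it; by the $\PA$-degree property $P$ computes a path, which decodes to an infinite sequence of pairwise-incomparable elements of $T$, i.e.\ a $P$-computable solution of $T$ — the desired contradiction.

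The main obstacle I expect is making the "tree of partial choices" genuinely infinite and its infinite branches genuinely \emph{infinite antichains} rather than merely infinite sequences that might stall or repeat. The true path given by $(\sigma_n)$ guarantees infinitude of the tree, but one must be careful that a spurious path does not get "stuck" choosing comparable elements; this is handled by only putting a node $(\tau_0,\dots,\tau_{k-1})$ into the tree when the $\tau_i$ are pairwise incomparable and $\tau_i\in C_i$, so that every infinite path automatically decodes to an infinite antichain of $T$ (the $\tau_i$ are distinct since incomparable, hence the antichain is infinite). A secondary technical point is that $\PA$ degrees compute paths only through infinite computable \emph{binary} trees; here the branching of the choice tree at level $k$ is bounded by $|C_k|$, which is finite but not uniformly bounded, so I would first recode the choice tree as a bushy-but-bounded tree into a binary one in the standard way (replacing a node with $m$ children by a binary subtree of depth $\lceil\log_2 m\rceil$), which preserves computability, infinitude, and the path correspondence.
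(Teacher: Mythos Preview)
Your approach is essentially the same as the paper's: you build the class of infinite antichains of~$T$ whose $n$th term has bounded length and bounded appearance time, observe this is a nonempty bounded $\Pi^0_1$ class (the paper writes it directly as a $\Pi^0_1$ class; you phrase it as a computably-branching tree of partial choices and recode into $2^{<\NN}$), and then invoke the $\PA$ degree to extract a path. The identification of the obstacles (infinitude of the choice tree witnessed by the true solution, and recoding finitely-branching into binary) is also the same.

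There is one slip worth fixing. The negation of ``$t_{T,(\sigma_n)}$ or $\ell_{(\sigma_n)}$ is hyperimmune'' is that \emph{both are dominated by computable functions}, not merely by $P$-computable ones. This matters: with $g,h$ only $P$-computable, your sets $C_n$ and hence your choice tree are only $P$-computable, and a $\PA$ degree is not in general $\PA$ over itself, so you cannot directly conclude that $P$ computes a path through an arbitrary infinite $P$-computable binary tree. The paper takes the dominating functions $t,\ell$ to be computable, which makes the $\Pi^0_1$ class genuinely $\Pi^0_1$ (over~$\emptyset$), and then the $\PA$ property applies as stated. Once you replace ``$P$-computable $g,h$'' by ``computable $g,h$'' throughout (and accordingly ``uniformly computable'' becomes correct rather than a misstatement), your argument goes through and coincides with the paper's.
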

\begin{proof}
	By contraposition, suppose there exists a solution~\((\sigma_n)_{n \in \NN}\) such that~\(t_{T, (\sigma_n)_{n \in \NN}}\) and~\(\ell_{(\sigma_n)_{n \in \NN}}\) are computably dominated by~\(t\) and~\(\ell\) respectively.
	Then the set
	\[\left\{(\tau_n)_{n \in \NN} \;\middle| \begin{array}{l}
		(\tau_n)_{n \in \NN}\text{ is an infinite antichain of }T\\
		t_{T, (\tau_n)_{n \in \NN}}\leqslant t\text{ and }\ell_{(\tau_n)_{n \in \NN}}\leqslant\ell
	\end{array}\right\}
	\] is a non-empty~\(\Pi_1^0\) class. It is non-empty because~\((\sigma_n)_{n \in \NN}\) belongs to it, and to show it is a~\(\Pi_1^0\) class, it can be written as
		\[\left\{(\tau_n)_{n \in \NN}\;\middle|\ \forall n, \begin{array}{l}
			\forall m<n, \tau_n\bot\tau_m\\
			\tau_n\in T[t(n)]\\
			|\tau_n|\leqslant\ell(n)
		\end{array}\right\}\]
	
	Thanks to~\(\ell\), the number of elements at each level \(n\) of the tree associated to this class is computably bounded by \(2^{\ell(n)}\), thus it can be coded by a~\(\Pi_1^0\) class of~\(2^\NN\). Finally since~\(P\) is of~\(\PA\) degree, it computes an element of any~\(\Pi_1^0\) class of the Cantor space, hence the result.
\end{proof}

Combining \Cref{tac-pa-nosolution} and \Cref{pahyper}, we obtain the following theorem from Conidis~\cite{conidistac}.
%We can now use all the previous propositions to obtain the desired theorem.

\begin{theorem}[Conidis~\cite{conidistac}]\label[theorem]{thm:tac-hyp}
	There is a computable instance of~\(\TAC\) such that each solution is of hyperimmune degree.
\end{theorem}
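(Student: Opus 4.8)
The plan is to combine \Cref{tac-pa-nosolution} and \Cref{pahyper} directly. First I would invoke \Cref{tac-pa-nosolution} to fix a $\PA$ degree $P$ together with a computable instance $T$ of $\TAC$ having no $P$-computable solution. This is the only place where a genuine construction is needed, and it has already been discharged: \Cref{nolowsol} produces, for any low set, a computable instance of $\TAC$ with no solution computable from that set, and the low basis theorem supplies a low $\PA$ degree, so \Cref{tac-pa-nosolution} packages exactly what we need.

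Next, let $(\sigma_n)_{n \in \NN}$ be an arbitrary solution of this instance $T$, i.e.\ an infinite antichain of $T$. Applying \Cref{pahyper} with this $P$ (which has $\PA$ degree) and this $T$ (which has no $P$-computable solution), we conclude that at least one of the two functions
$$t_{T,(\sigma_n)_{n \in \NN}} : n\mapsto \min\{t\mid \sigma_n\in T[t]\} \qquad \text{and} \qquad \ell_{(\sigma_n)_{n \in \NN}}:n\mapsto|\sigma_n|$$
is hyperimmune.

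Finally I would observe that both of these functions are computable from the solution $(\sigma_n)_{n \in \NN}$ itself. The function $\ell_{(\sigma_n)_{n \in \NN}}$ obviously is, since one only needs to read off the lengths of the strings in the solution. The function $t_{T,(\sigma_n)_{n \in \NN}}$ is $(\sigma_n)_{n \in \NN}$-computable \emph{because} $T$ is a computable tree: the stagewise approximation $T[t]$ is uniformly computable, so from $\sigma_n$ one can search for the least stage at which $\sigma_n$ appears in $T[t]$. Since a hyperimmune function witnesses that its Turing degree is a hyperimmune degree, and any degree computing a hyperimmune function is itself of hyperimmune degree, it follows that $(\sigma_n)_{n \in \NN}$ is of hyperimmune degree. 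As $(\sigma_n)_{n \in \NN}$ was an arbitrary solution, $T$ is the desired computable instance of $\TAC$.

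The only real subtlety — and the reason the argument is phrased through \Cref{tac-pa-nosolution} rather than directly through \Cref{nolowsol} — is ensuring that the two candidate hyperimmune functions produced by \Cref{pahyper} are genuinely computable from the solution alone, with no auxiliary oracle; computability of $T$ is precisely what makes the stage function $t_{T,(\sigma_n)_{n \in \NN}}$ solution-computable. Everything else is a routine unwinding of definitions, so I expect no further obstacle.
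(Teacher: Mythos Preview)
Your proposal is correct and follows essentially the same route as the paper's proof: fix a low $\PA$ degree $P$, use \Cref{nolowsol} (packaged as \Cref{tac-pa-nosolution}) to get a computable instance~$T$ with no $P$-computable solution, apply \Cref{pahyper} to force one of $t_{T,(\sigma_n)}$ or $\ell_{(\sigma_n)}$ to be hyperimmune, and observe both are computable from the solution since the enumeration of~$T$ is computable. The only cosmetic difference is that the paper cites \Cref{nolowsol} directly and names the low $\PA$ degree explicitly, whereas you invoke the packaged \Cref{tac-pa-nosolution}; the content is identical.
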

\begin{proof}
	Let~\(P\) be of low PA degree. By using \Cref{nolowsol} we get a computable instance~\(T\) of~\(\TAC\) with no~\(P\)-computable solution. Thus, by using \Cref{pahyper} we deduce that, for any solution~\((\sigma_n)\), its function~\(t_{T, (\sigma_n)_{n \in \NN}}\) or~\(\ell_{(\sigma_n)_{n \in \NN}}\) is hyperimmune. And~\((\sigma_n)_{n \in \NN}\) computes both, since~\(T\) is computable ; meaning it is of hyperimmune degree.
\end{proof}

\begin{corollary}
	\(\RCA_0\vdash\TAC\implies\HYP\)
\end{corollary}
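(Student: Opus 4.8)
The plan is to deduce $\HYP$ by relativizing \Cref{thm:tac-hyp} (and with it the chain \Cref{d2low}--\Cref{nod2}--\Cref{nolowsol}--\Cref{pahyper}) to an arbitrary oracle. Recall that $\HYP$ asserts that for every set $X$ there is a function which is not dominated by any $X$-computable function. So I would work inside a model of $\RCA_0 + \TAC$, fix an arbitrary set $X$, and aim to produce, provably in $\RCA_0$, an $X$-computable instance $T$ of $\TAC$ all of whose solutions are of hyperimmune degree relative to $X$. Once this is done, one applies $\TAC$ to $T$ to get a solution $(\sigma_n)$; since $T$ is $X$-computable, $(\sigma_n) \oplus X$ computes both $t_{T,(\sigma_n)}\colon n\mapsto\min\{t\mid \sigma_n\in T[t]\}$ and $\ell_{(\sigma_n)}\colon n\mapsto|\sigma_n|$, and by the choice of $T$ one of these is not dominated by any $X$-computable function, which is exactly what $\HYP$ requires.

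To construct such a $T$, I would first relativize \Cref{d2low}: from a set $P$ low (and, for the \Cref{pahyper} step, of PA degree) relative to $X$ one gets a uniformly $\Delta^0_2(X)$ sequence $(A_e)_e$ enumerating all infinite $P$-computable sets, where one checks that the explicit approximation used in the proof of \Cref{d2low} converges at every argument, so that the sequence can be produced without appealing to the existence of $X'$. Feeding $(A_e)_e$ into the relativization of the finite-injury construction of \Cref{nod2} --- with its $\emptyset'$-approximation replaced by this $\Delta^0_2(X)$ approximation, which keeps the tree genuinely $X$-computable --- yields an $X$-computable instance $T$ of $\TAC$ with no $P$-computable solution. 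Then the relativization of \Cref{pahyper} applies: since $P$ is of PA degree over $X$ and $T$ has no $P$-computable solution, the $\Pi^0_1(X)$-class argument shows that for every solution $(\sigma_n)$ of $T$, one of $t_{T,(\sigma_n)}$ or $\ell_{(\sigma_n)}$ is hyperimmune relative to $X$, as required above.

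The main obstacle is carrying this out over $\RCA_0$ rather than over a theory proving $\WKL$: $\RCA_0$ does not prove that a set low and of PA degree over $X$ exists, so one cannot simply pick such a $P$ inside the model. What rescues the argument is that \Cref{nolowsol} and \Cref{pahyper} are universally quantified over the low/PA parameter and are invoked only contrapositively, so no PA degree need be exhibited in the model; the construction of $T$ only needs the $\Delta^0_2(X)$ sequence (obtainable in $\RCA_0$ from $\Delta^0_2(X)$-approximations directly, as sketched), and the hyperimmunity of a solution's witness functions is then forced by the fact that a bounded solution would be captured by that sequence, contradicting the construction. Getting this bookkeeping right --- in particular arranging the sequence so that it defeats not merely $X$-computable solutions but every potential bounded solution, and verifying that \Cref{nod2}'s movable-marker construction and verification survive with only the induction available in $\RCA_0$ --- is where the real work lies.
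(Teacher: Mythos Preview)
Your overall plan---relativize \Cref{thm:tac-hyp} to an arbitrary oracle~$X$ and then apply $\TAC$ inside the model---is precisely what the paper intends; the corollary is stated without proof as an immediate consequence of \Cref{thm:tac-hyp}. You go further than the paper by flagging a genuine formalization issue: the instance $T$ produced in \Cref{thm:tac-hyp} is built from a low PA degree $P$, and $\RCA_0$ does not prove such a $P$ exists over an arbitrary~$X$.

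Your proposed resolution, however, has a gap at exactly the point you yourself identify as ``where the real work lies.'' You write that \Cref{nolowsol} and \Cref{pahyper} are ``invoked only contrapositively, so no PA degree need be exhibited in the model,'' but this is not accurate: the tree $T$ depends constructively on~$P$, since the $\Delta^0_2(X)$ sequence fed into \Cref{nod2} is the list of infinite $P$-computable sets, and its very approximation encodes an index for the reduction $P'\leq_T X'$. Your fallback---replace that sequence by one which already captures every ``bounded'' potential solution---is the right instinct, but you do not say what sequence this is. The natural candidate (leftmost paths of the $\Pi^0_1(X)$ classes from \Cref{pahyper}) depends on $T$, creating a circularity; and uniformly listing all infinite $\Delta^0_2(X)$ sets runs into the difficulty that deciding whether a given $X$-computable approximation converges to an infinite set is too complex to pad in a $\Delta^0_2(X)$ way. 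So the proposal, as written, does not close the gap it correctly diagnoses---though, to be fair, the paper simply does not address this point at all.
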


In his direct proof of \Cref{thm:tac-hyp}, Conidis~\cite{conidistac} constructed computable instance of~\(\TAC\) and two functionals $\Phi, \Psi$ such that for every solution $H$, either $\Phi^H$ or $\Psi^H$ is hyperimmune. Interestingly, \Cref{pahyper} can be used to show that $\Phi$ and $\Psi$ can be chosen to be \(t_{T, -}\) and \(\ell_-\), without loss of generality. 

\begin{corollary}\label[corollary]{cor:tac-hyp-wlog}
	For any instance~\(T\) of~\(\TAC\) whose solutions are all of hyperimmune degree, at least one of the function~\(t_{T, -}\) or~\(\ell_-\) is a witness.
\end{corollary}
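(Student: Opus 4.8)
The plan is to deduce this from \Cref{pahyper}: that proposition already shows that, whenever $P$ is a set of $\PA$ degree for which $T$ has no $P$-computable solution, then for \emph{every} solution $(\sigma_n)_{n\in\NN}$ of $T$ one of $t_{T,(\sigma_n)_{n\in\NN}}$ or $\ell_{(\sigma_n)_{n\in\NN}}$ is hyperimmune. So the whole task reduces to producing a single such $P$. First I would invoke the hyperimmune-free basis theorem of Jockusch and Soare, applied to the nonempty $\Pi^0_1$ class of completions of Peano arithmetic, to obtain a set $P$ of $\PA$ degree which is moreover of hyperimmune-free degree.

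Next I would check that this $P$ has the required property, namely that $T$ has no $P$-computable solution. Indeed, if $(\sigma_n)_{n\in\NN}$ were a solution of $T$ with $(\sigma_n)_{n\in\NN}\leqslant_T P$, then since hyperimmune-freeness is downward closed under Turing reducibility, $(\sigma_n)_{n\in\NN}$ would be of hyperimmune-free degree; this contradicts the standing hypothesis that all solutions of $T$ are of hyperimmune degree. Hence $T$ has no $P$-computable solution, and \Cref{pahyper} applies to $T$ and this $P$, which gives exactly the claimed conclusion: for every solution $(\sigma_n)_{n\in\NN}$ of $T$, either $t_{T,(\sigma_n)_{n\in\NN}}$ or $\ell_{(\sigma_n)_{n\in\NN}}$ is hyperimmune, so the pair $(t_{T,-},\ell_-)$ always furnishes a witness.

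I do not expect a serious obstacle here, since the substantive content is already packaged in \Cref{pahyper}; the only new ingredient is the standard existence of a $\PA$ degree that is also of hyperimmune-free degree. The single point that deserves a moment's care is the verification that a solution computed from a hyperimmune-free oracle is itself of hyperimmune-free degree, but this is immediate from the definition of hyperimmune-free degree (every function it computes is dominated by a computable one).
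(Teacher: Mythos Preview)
Your proposal is correct and follows essentially the same approach as the paper: both take a $\PA$ degree $P$ of hyperimmune-free degree (via the Jockusch--Soare hyperimmune-free basis theorem), observe that $T$ can have no $P$-computable solution under the hypothesis, and then appeal to \Cref{pahyper}. The paper phrases this as a proof by contradiction using the contrapositive of \Cref{pahyper}, while you apply \Cref{pahyper} directly, but the content is identical.
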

\begin{proof}
	Let~\(T\) be an instance of~\(\TAC\) whose solutions are all of hyperimmune degree, and let~\((\sigma_n)_{n \in \NN}\) be such a solution. By contradiction, if we suppose~\(t_{T, (\sigma_n)_{n \in \NN}}\) and~\(\ell_{(\sigma_n)_{n \in \NN}}\) are both computably dominated, then by \Cref{pahyper}, \(T\) has a~\(P\)-computable solution. If we choose~\(P\) to be computably dominated, then it cannot compute a solution of hyperimmune degree, hence a contradiction.
\end{proof}

Note that for every (computable or not) instance of \(\TAC\), there is a solution~$(\sigma_n)_{n \in \NN}$ such that~\(\ell_{(\sigma_n)_{n \in \NN}}\) is dominated by the identity function, by picking any path, and building an antichain along it.

\section{\(\SHER\)}\label[section]{sect:sher}
We have seen in \Cref{sect:adsem} that $\TCAC$ follows from both $\ADS$ and~$\EM$ over~$\RCA_0$.
The proof of~$\TCAC$ from $\ADS$ used only one specific property of the partial order $(T, \prec)$, that we shall refer to as \emph{semi-heredity}. Dorais and al.~\cite{dorais2016uniform} introduced the principle ~$\SHER$, which is the restriction of Ramsey's theorem for pairs to semi-hereditary colorings. In this section, we show that the seemingly artificial principle $\SHER$ turns out to be equivalent to the rather natural principle~$\TCAC$. This equivalence can be seen as more step towards the robustness of~$\TCAC$.

%In this section we show the equivalence between~\(\TCAC\) and~\(\SHER\).

\begin{definition}[Semi-heredity]
	A coloring~\(f:[\NN]^2\to 2\) is \textbf{semi-here\allowbreak ditary} for the color~\(i<2\) if
	$$\forall x\lt y\lt z, f(x,z)=f(y, z)=i \implies f(x, y)=i$$
\end{definition}

The name ``semi-heredity" comes from the contraposition of the previous definition~\(\forall x\lt y\lt z, f(x, y)=1-i\implies f(x, z)=1-i\lor f(y, z)=1-i\)

\begin{definition}[\(\SHER\), \cite{dorais2016uniform}]
	For any semi-hereditary coloring~\(f\), there exists an infinite~\(f\)-homogeneous set.
\end{definition}

The first proposition consists essentially of noticing that, given a set of strings~$T \subseteq \NN^{<\NN}$,
the partial order $(T, \prec)$ behaves like a semi-hereditary coloring. The whole technicality of the proposition comes from the definition of an injection $\psi : \NN \to T$ with some desired properties.

\begin{proposition}\label[proposition]{sher-impl-tcac}
	\(\RCA_0\vdash\SHER\implies\TCAC[\tce]\) and
	
	\(\TCAC[\tce]\leqslant_c\SHER\)
\end{proposition}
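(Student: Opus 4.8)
I want to show $\RCA_0 \vdash \SHER \implies \TCAC[\tce]$, and simultaneously the computable reduction $\TCAC[\tce] \leqslant_c \SHER$. Given an infinite c.e.\ tree (or set) $T \subseteq \NN^{<\NN}$, the strategy is to encode it as a semi-hereditary coloring $f:[\NN]^2 \to 2$, apply $\SHER$ to get an infinite homogeneous set $H$, and then decode $H$ into either an infinite path or an infinite antichain of $T$. By \Cref{implyce} (and since $\TCAC[\tce] \leqslant_c \TCAC$), it suffices to produce the instance $T$ as a c.e.\ object; and by \Cref{sac-equiv-tac}-style reasoning, I may as well first enumerate $T$ into an increasing sequence and work with a carefully chosen computable listing $\psi : \NN \to T$. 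The point of $\psi$ is to break the ``a predecessor can be discovered late'' pathology of c.e.\ trees: I will choose $\psi$ so that whenever $\psi(x) \prec \psi(y)$ as strings, we have $x < y$. One natural choice mirrors the proof of the $\EM$ implication: enumerate $T$, and when a new string $\sigma$ appears together with all its prefixes (they must already be present since $T$ is downward-closed as a tree — or, if working with a c.e.\ set, list prefixes first), assign it the next natural number. This guarantees monotonicity of $\psi^{-1}$ along $\prec$.

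\textbf{The coloring.} With such a $\psi$ in hand, define $f:[\NN]^2 \to 2$ by $f(\{x,y\}) = 1$ iff (assuming $x <_\NN y$) $\psi(x) \prec \psi(y)$, and $f(\{x,y\}) = 0$ otherwise, i.e.\ when $\psi(x) \bot \psi(y)$ (the case $\psi(x) \succ \psi(y)$ cannot occur for $x < y$ by the choice of $\psi$). The key claim is that $f$ is semi-hereditary for the color $1$: suppose $x < y < z$ with $f(x,z) = f(y,z) = 1$, meaning $\psi(x) \prec \psi(z)$ and $\psi(y) \prec \psi(z)$; then $\psi(x)$ and $\psi(y)$ are both prefixes of the same string $\psi(z)$, hence comparable, and since $x < y$ we get $\psi(x) \preccurlyeq \psi(y)$, in fact $\psi(x) \prec \psi(y)$ (they are distinct as $\psi$ is injective), so $f(x,y) = 1$. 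This is exactly the semi-heredity condition $\forall x < y < z,\ f(x,z) = f(y,z) = i \implies f(x,y) = i$ with $i = 1$. So $\SHER$ applies and yields an infinite $f$-homogeneous set $H$.

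\textbf{Decoding.} If $H$ is homogeneous for color $1$, then listing $H = \{x_0 < x_1 < \dots\}$ we get $\psi(x_0) \prec \psi(x_1) \prec \dots$, an infinite $\prec$-chain in $T$; its union (or rather the downward closure, which lies in $T$ since $T$ is a tree) is an infinite path. If $H$ is homogeneous for color $0$, then for all $x < y$ in $H$ we have $\psi(x) \bot \psi(y)$, so $\{\psi(x) : x \in H\}$ is directly an infinite antichain of $T$. In both cases the decoding is computable from $H$ (and $T$), giving the reduction $\TCAC[\tce] \leqslant_c \SHER$, and the same argument formalizes in $\RCA_0$ once one checks that ``$f$ is semi-hereditary'' and the decoding steps are provable there — the homogeneous set $H$ exists by $\SHER$, and extracting the path or antichain uses only $\Delta^0_1$-comprehension relative to $H$.

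\textbf{Main obstacle.} The genuinely delicate point — as the paper's own remark flags — is the construction of $\psi$ when $T$ is merely c.e.\ rather than a computable tree: one must ensure $\psi$ is a total computable injection with infinite range inside $T$ and with the crucial monotonicity property $\psi(x) \prec \psi(y) \implies x < y$, all while only having a c.e.\ enumeration of $T$. The clean way around this is to first invoke \Cref{implyce} to replace the c.e.\ instance by a computable one (paying attention that the reduction there produces strings $\langle n, s\rangle$ whose prefix structure still refines that of $T$), or equivalently to enumerate prefixes before their extensions; after that reduction, $\psi$ can be taken to simply enumerate the computable tree in length-lexicographic order, for which $\sigma \prec \tau \implies |\sigma| < |\tau| \implies \psi^{-1}(\sigma) < \psi^{-1}(\tau)$ is automatic. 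Everything else is bookkeeping.
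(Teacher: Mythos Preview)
Your proposal is correct and follows essentially the same approach as the paper: encode the tree via a listing $\psi:\NN\to T$ with the monotonicity property $\psi(x)\prec\psi(y)\implies x<y$, define the coloring $f(\{x,y\})=1\iff\psi(x)\prec\psi(y)$ (for $x<y$), verify semi-heredity for color~$1$ exactly as you do, apply $\SHER$, and decode the homogeneous set as a path or an antichain. The only difference is in how $\psi$ is obtained: the paper handles the c.e.\ case directly by taking the prime-factorization map $\varphi(x_0\cdots x_{n-1})=p_0^{x_0}\cdots p_{n-1}^{x_{n-1}}-1$ (which satisfies $\sigma\prec\tau\implies\varphi(\sigma)<\varphi(\tau)$) and letting $\psi(n)$ be the first element of $T$ to appear in the enumeration whose $\varphi$-value exceeds all of $\varphi(\psi(0)),\ldots,\varphi(\psi(n-1))$; you instead detour through \Cref{implyce} to reduce to a computable tree and then enumerate length-lexicographically. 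Both routes are valid and yield the same coloring and decoding; the paper's is slightly more self-contained.
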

\begin{proof}
	Let~\(T\subseteq \NN^{<\NN}\) be an infinite \ce tree. 
	First, let~\(\varphi:\NN^{<\NN}\to\NN\) the bijection~\( 
	x_0\cdot\ldots\cdot x_{n-1}\mapsto p_0^{x_0}\times\ldots\times p_{n-1}^{x_{n-1}}-1
	\)
	where~\(p_k\) is the~\(k^{\text{th}}\) prime number.
	Define $\psi : \NN \to T$ by letting~$\psi(n)$ be the least $\sigma \in T$ (in order of apparition) such that $\phi(\sigma)$ is bigger than $\phi(\psi(0)), \phi(\psi(1)), \dots, \phi(\psi(n-1))$. 
	Note that, by construction, the range of $\psi$ is infinite and computable. Moreover, if $\sigma \prec \tau$,
	then $\varphi(\sigma) < \varphi(\tau)$, hence $\psi^{-1}(\sigma) < \psi^{-1}(\tau)$.
	Also note that the range of~$\psi$ is not necessarily a tree.

	Let~\(f: [\NN]^2 \to 2\) be the coloring defined by $f(\{x, y\}) = 1$ iff $x <_\NN y$ and $\psi(x)\prec\psi(y)$ coincide. Let us show that $f$ is semi-hereditary for the color~1.
	Suppose we have~\(x<y<z\) and that~\(f(x, z)=f(y, z)=1\), \ie letting~\(\sigma \coloneqq  \psi(x), \tau \coloneqq  \psi(y), \rho \coloneqq  \psi(z)\) then we have~\(\sigma\prec\rho\) and~\(\tau\prec\rho\), thus either~\(\sigma\prec\tau\) or~\(\tau\prec\sigma\). But since~\(x<y\), \ie~\(\psi^{-1}(\sigma)<\psi^{-1}(\tau)\), only~\(\sigma\prec\tau\) can hold due to the above note, meaning~\(f(x, y)=1\).

	By~\(\SHER\) applied to~\(f\), there is an infinite $f$-homogeneous set~\(H\). If it is homogeneous for the color 0, then the set~\(\psi(H)\) corresponds to an infinite antichain of~\(T\). Likewise, if it is homogeneous for the color 1, then the set~\(\psi(H)\) is an infinite path of~\(T\).
\end{proof}

We now prove the converse of the previous proposition.

\begin{definition}[Weak homogeneity]
	Given a coloring~\(f:[\NN]^2\to k\), a set~\(A\coloneqq \{a_0 < a_1 < \ldots\}\subseteq\NN\) is \textbf{weakly-homogeneous} for the color~\(i<k\) if~\(\forall j, f(a_j, a_{j+1})=i\)
\end{definition}

Before proving \Cref{tcac-impl-sher}, we need a technical lemma.

\begin{lemma}[\(\RCA_0\)]\label[lemma]{weaksher}
	Let~\(f:[\NN]^2\to 2\) be a semi-hereditary coloring for the color~\(i<2\).
	For every infinite set~\(A\coloneqq \{a_0 < a_1 <\ldots\}\) which is weakly-homogeneous for the color~\(i\), there is an infinite $f$-homogeneous subset~$B \subseteq A$.
\end{lemma}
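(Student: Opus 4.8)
\textbf{Proof plan for \Cref{weaksher}.} The goal is to extract a genuinely $f$-homogeneous subset $B$ from a set $A=\{a_0<a_1<\ldots\}$ that is only \emph{weakly}-homogeneous for the color $i$, i.e.\ consecutive pairs $\{a_j,a_{j+1}\}$ all have color $i$, using only that $f$ is semi-hereditary for $i$. The key observation is what semi-heredity buys us here: if $x<y<z$ with $f(x,z)=f(y,z)=i$, then $f(x,y)=i$. So if I already know that some element $a_k$ has color $i$ with \emph{all} larger elements of the current candidate set, I can use it as an ``anchor'' and pull the color-$i$ relation leftward. The plan is therefore to build $B$ greedily from the right, maintaining the invariant that every element placed into $B$ has color $i$ with all previously-placed (larger) elements of $B$.

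Concretely, I would proceed as follows. First, note that weak-homogeneity already gives, for each $j$, $f(a_j,a_{j+1})=i$. I want to thin $A$ to a subset on which \emph{all} pairs have color $i$. I would define $B=\{b_0<b_1<\ldots\}$ recursively, scanning $A$ and deciding membership, but it is cleaner to build $B$ \emph{downward} in the following sense: suppose inductively I have chosen indices $j_0<j_1<\cdots<j_{n}$ in $A$ such that $f(a_{j_p},a_{j_q})=i$ for all $p<q\le n$ — call $\{a_{j_0},\ldots,a_{j_n}\}$ a good block. To extend it, I look for the next index $j_{n+1}>j_n$ such that $f(a_{j_{n+1}-1},a_{j_{n+1}})=i$ (this is automatic: every such pair has color $i$ by weak-homogeneity) and, crucially, $f(a_{j_n},a_{j_{n+1}})=i$. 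The point is that for the new element to attach to the whole good block I only need it to have color $i$ with its immediate predecessor \emph{in the block}, because then semi-heredity propagates: for any $p<n$, applying semi-heredity to $a_{j_p}<a_{j_n}<a_{j_{n+1}}$ with $f(a_{j_p},a_{j_{n+1}})=\,?$ is not yet known — so I must be slightly more careful. The correct anchor direction is: I should check $f(a_{j_n},a_{j_{n+1}})=i$ directly, and then for $p<n$ I want $f(a_{j_p},a_{j_{n+1}})=i$, which follows from semi-heredity applied to $a_{j_p}<a_{j_n}<a_{j_{n+1}}$ \emph{provided} $f(a_{j_p},a_{j_n})=i$ (true by induction) and $f(a_{j_n},a_{j_{n+1}})=i$ (what I just checked). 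Wait — semi-heredity says $f(x,z)=f(y,z)=i\Rightarrow f(x,y)=i$; I need the converse direction. So instead I should orient things so the ``$z$'' is a fixed large anchor. This is the real subtlety and is addressed next.

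The resolution: pick the anchor to be an element that is color-$i$ with cofinitely many things, then work leftward. Using weak-homogeneity, the pairs $\{a_j,a_{j+1}\}$ all have color $i$. I claim that for infinitely many $j$ we have $f(a_j,a_{j'})=i$ for all $j'>j$ within some thinned subset — or rather, I set up the recursion as: having fixed a good finite block $a_{j_0}<\cdots<a_{j_n}$, find $j_{n+1}>j_n$ large enough that $f(a_{j_n},a_{j_{n+1}})=i$; such $j_{n+1}$ exists because $j_{n+1}=j_n+1$ works by weak-homogeneity. Now for the block to remain good I need $f(a_{j_p},a_{j_{n+1}})=i$ for all $p\le n$; apply semi-heredity with $x=a_{j_p}$, $y=a_{j_n}$, $z=a_{j_{n+1}}$: this gives $f(a_{j_p},a_{j_n})=i$ from $f(a_{j_p},a_{j_{n+1}})=f(a_{j_n},a_{j_{n+1}})=i$, which is the wrong way. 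The genuinely correct recursion must build $B$ so that the \emph{new, largest} element plays the role of $z$ and we deduce colors of old pairs — meaning we should choose $b_{n+1}$ first as a candidate large element and then verify it is color-$i$ with each current $b_p$; semi-heredity then lets two such verified facts collapse. I would instead fix once and for all a single element $c=a_1$ (anchor) and thin $A\cap(c,\infty)$ to those $a_j$ with $f(a_j,c)=i$... but that set may not be weakly-homogeneous. The honest main obstacle is exactly this orientation mismatch, and the clean fix — which I expect the authors use — is to run a \emph{two-step} argument: first show weak-homogeneity for color $i$ plus semi-heredity forces, for each $n$, that $\{a_0,\ldots,a_n\}$ contains no pair of color $1-i$ that is ``blocked'' — more precisely, one shows by induction on the gap $m$ that $f(a_j,a_{j+m})=i$ for all $j,m$, directly: the base case $m=1$ is weak-homogeneity; for the step, given $f(a_j,a_{j+m})=i$ and $f(a_{j+m},a_{j+m+1})=i$, we \emph{cannot} immediately conclude, so instead induct the other way — fix $m$ and induct on $j$ downward is impossible since $j$ is unbounded below...

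Given these circularities, the approach I would actually commit to: use semi-heredity contrapositively, $f(x,y)=1-i\Rightarrow f(x,z)=1-i\lor f(y,z)=1-i$ for all $x<y<z$. Suppose, for contradiction, that no infinite homogeneous $B\subseteq A$ exists; since $A$ is weakly-homogeneous for $i$ and any pair of color $i$ could start a homogeneous set, the obstruction is a ``cascade'' of color-$(1-i)$ pairs. I would show: define $g:[A]^2\to 2$ by $g=f\uh[A]^2$; if $g$ has an infinite homogeneous set we are done, and if every infinite set meets both colors then in particular there are $a_j<a_k$ with $f(a_j,a_k)=1-i$; take the least such $k$ and least such $j$; by weak-homogeneity $k>j+1$, so $f(a_{k-1},a_k)=i$ and $f(a_j,a_{k-1})=i$ (by minimality of $k$, since $j<k-1<k$), and then semi-heredity applied to $a_j<a_{k-1}<a_k$ with $f(a_j,a_{k-1})=f(a_{k-1},a_k)$? — no. Apply the contrapositive form: $f(a_j,a_k)=1-i$ and $j<k-1<k$ forces $f(a_j,a_{k-1})=1-i$ or $f(a_{k-1},a_k)=1-i$; the latter contradicts weak-homogeneity, so $f(a_j,a_{k-1})=1-i$, contradicting minimality of $k$. \textbf{This is the clean argument}, and I would present exactly this: it shows $f$ restricted to $A$ is constantly $i$, so $B=A$ works; all reasoning is $\Delta^0_0$ with a minimization, hence available in $\RCA_0$. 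The main obstacle was finding the right orientation of semi-heredity, and the contrapositive ``no color-$(1-i)$ pair can be minimal'' phrasing dissolves it.
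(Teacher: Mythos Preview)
Your final argument contains a genuine error: the step ``$f(a_j,a_k)=1-i$ with $a_j<a_{k-1}<a_k$ forces $f(a_j,a_{k-1})=1-i$ or $f(a_{k-1},a_k)=1-i$'' is the contrapositive of \emph{transitivity} for color~$i$ (namely $f(x,y)=i\land f(y,z)=i\Rightarrow f(x,z)=i$), not of semi-heredity (which is $f(x,z)=i\land f(y,z)=i\Rightarrow f(x,y)=i$). You have swapped the roles of the outer pair $(x,z)$ and the left pair $(x,y)$. Consequently the conclusion ``$B=A$ works'' is simply false: take $f(0,1)=i$, $f(n,m)=i$ for all $1\le n<m$, and $f(0,m)=1-i$ for all $m\ge 2$. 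This coloring is semi-hereditary for~$i$ (any triple with $x=0$ has $f(0,z)=1-i$ for $z\ge 2$, so the hypothesis fails; any triple with $x\ge 1$ has all colors~$i$), the set $A=\NN$ is weakly-homogeneous for~$i$, yet $f(0,2)=1-i$.

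The paper's proof uses semi-heredity in the correct orientation. The key observation is that for each fixed~$j$, the set $\{k>j: f(a_j,a_k)=i\}$ is an initial segment of $\{k:k>j\}$: if $f(a_j,a_\ell)=i$, then applying semi-heredity to $a_j<a_{\ell-1}<a_\ell$ (with $f(a_{\ell-1},a_\ell)=i$ by weak-homogeneity) gives $f(a_j,a_{\ell-1})=i$, and one iterates downward. So every $a_j$ is either ``type~1'' ($f(a_j,a_k)=i$ for all $k>j$) or ``type~2'' (there is a switch point $\ell>j$ with $f(a_j,a_k)=i$ for $j<k<\ell$ and $f(a_j,a_k)=1-i$ for $k\ge\ell$). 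If only finitely many $a_j$ are type~2, discard them to get a set homogeneous for~$i$. Otherwise infinitely many are type~2, and one greedily builds a set homogeneous for $1-i$ by repeatedly jumping past switch points, using $\BSig_1(A)$ to bound the search. Note in particular that the homogeneous set need not have color~$i$; your orientation troubles earlier in the proposal were a symptom of implicitly assuming it must.
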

\begin{proof}[Proof (Dorais)]
	We first show that any~\(a_j\) falls in one of these two categories:
	\begin{enumerate}
		\item~\(\forall k>j, f(a_j, a_k)=i\)
		\item~\(\exists \ell>j, 
			\big(\forall k\in\ \rrbracket j, \ell\llbracket, f(a_j, a_k)=i \ \land\ 
			\forall k\geqslant \ell, f(a_j, a_k)=1-i\big)\)
	\end{enumerate}
	Indeed, for any~\(\ell>j\) such that~\(f(a_j, a_\ell)=i\), by semi-heredity, \(f(a_j, a_{\ell-1})=i\). So with a finite induction we get~\(\forall k\in\  \rrbracket j, \ell\rrbracket, f(a_j, a_k)=i\).
	
	There are now two possibilities. Either there are finitely many~\(a_j\) of type 2, and so by removing these elements, the resulting set is \(f\)-homogeneous for the color~\(i\). Otherwise there are infinitely many~\(a_j\) of type 2, in which case one can define an infinite $f$-homogeneous subset for color~$1-i$ using~\(\BS_1^0(A)\) as follows: due to the observation above, ``\(a_j\) is of type 2" is equivalent to the \(\Sigma^0_1(A)\) formula \(\exists\ell>j, f(a_j, a_\ell)=1-i\). Thus, given a finite set of type 2 elements \(\{a_{j_0}, \ldots, a_{j_{k-1}}\}\), by \(\BS_1^0(A)\) there is \(b>\max\{j_0, \ldots, j_{k-1}\}\), and so \(j_k\) is defined as the  smallest integer \(j_k\) such that \(j_k\geqslant b\) and \(f(a_{j_{k-1}}, a_{j_k})=1-i\).

\end{proof}

\begin{proposition}\label[proposition]{tcac-impl-sher}
	\(\RCA_0\vdash\TCAC\implies\SHER\) and
	
	\(\SHER\leqslant_c\TCAC\)
\end{proposition}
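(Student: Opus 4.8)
Proof plan for Proposition \ref{tcac-impl-sher} ($\RCA_0\vdash\TCAC\implies\SHER$ and $\SHER\leqslant_c\TCAC$).

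The plan is to take a semi-hereditary coloring $f:[\NN]^2\to 2$, say semi-hereditary for the color $i<2$, and build from it an instance of $\TCAC$, i.e.\ an infinite subtree $T\subseteq\NN^{<\NN}$, in such a way that any infinite path or antichain of $T$ yields a weakly-homogeneous set for $f$, at which point \Cref{weaksher} finishes the job by upgrading weak homogeneity to genuine homogeneity. The natural tree to use is the tree of ``$f$-increasing-like'' sequences: declare $a_0\cdot a_1\cdots a_{n-1}\in T$ iff $a_0<a_1<\cdots<a_{n-1}$ and $f(a_j,a_{j+1})=i$ for every $j<n-1$. This $T$ is computable from $f$, is downward closed, and is infinite (it contains longer and longer strings because for any $a$ there is some $b>a$ with $f(a,b)=i$, using semi-heredity: if $f(a,b)=1-i$ for all $b>a$ we can still start a new string at the larger element — more carefully, $T$ has arbitrarily long strings since $[\NN]^2\to 2$ with semi-heredity for $i$ cannot have every pair colored $1-i$ forever in a way that blocks extension; this needs the small argument that some color-$i$ edge always exists out of cofinitely many vertices, or one observes directly that the set of minimal elements that start maximal strings is finite).

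The key steps, in order: (1) Define $T$ as above and check it is an infinite computable subtree of $\NN^{<\NN}$, so it is a legitimate instance of $\TCAC$ (both for the $\RCA_0$ implication and for $\leqslant_c$, where $\widehat I$ is $I$-computable). (2) Apply $\TCAC$ to $T$ to obtain either an infinite path or an infinite antichain. (3) If $T$ has an infinite path $P=b_0\cdot b_1\cdot b_2\cdots$ with $b_0<b_1<\cdots$, then by definition of $T$ we have $f(b_j,b_{j+1})=i$ for all $j$, so $\{b_j\mid j\in\NN\}$ is weakly-homogeneous for $i$; apply \Cref{weaksher} to get an infinite $f$-homogeneous set. (4) If instead $T$ has an infinite antichain $A$, the elements of $A$ are pairwise incomparable strings in $T$; here one must extract a weakly-homogeneous set for the color $1-i$. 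The idea: for $\sigma,\tau\in A$ incomparable, consider their longest common prefix $\mu$ and the two distinct immediate successors $\mu\cdot c$, $\mu\cdot d$ of $\mu$ lying below $\sigma,\tau$ respectively; since $\mu\cdot c\in T$ we have (if $|\mu|\geqslant 1$, writing $\mu$'s last entry as $a$) $f(a,c)=i$ and $f(a,d)=i$, while $c\neq d$ — this does not immediately give a $1-i$ edge, so instead one works with the last entries of the antichain strings directly: taking $\sigma_n$ the $n$-th element of $A$ in order of length (or in the order induced by $\prec$ on an auxiliary linear order), look at consecutive ones and use incomparability plus membership in $T$ to force $f$(last entry of $\sigma_n$, last entry of $\sigma_{n+1}$)$=1-i$ for infinitely many $n$, after passing to a subsequence.

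The main obstacle will be step (4): squeezing a weakly-$(1-i)$-homogeneous set out of an infinite antichain of $T$. An antichain of $T$ is a family of strings branching at various places, and it is not obvious that the last entries of a suitable subfamily are $f$-related by $1-i$ at consecutive steps. I expect the right move is to reindex the antichain $A=\{\rho_0,\rho_1,\dots\}$ so that $|\rho_0|<|\rho_1|<\cdots$ (possible: an infinite antichain in a finitely-or-infinitely-branching tree has strings of unbounded length, so thin $A$ to make lengths strictly increasing — or if some length repeats infinitely often use König to find an infinite path instead, reducing to case (3)), and then argue that for each $n$, since $\rho_n\bot\rho_{n+1}$ yet $\rho_n,\rho_{n+1}\in T$, the branch point and the $T$-membership conditions combine — via the definition $f(\cdot,\cdot)=i$ along each string and the semi-heredity contrapositive $f(x,y)=1-i\implies f(x,z)=1-i\lor f(y,z)=1-i$ — to produce the needed color-$(1-i)$ edges; then apply \Cref{weaksher} for the color $1-i$. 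The computable-reducibility statement follows verbatim, since $T$ is $f$-computable and the decoding of a solution of $T$ (a path or antichain) into a weakly-homogeneous set and then (via the effective proof of \Cref{weaksher}) into an $f$-homogeneous set is computable from the solution together with $f$.
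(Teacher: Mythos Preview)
Your plan founders at step~(4), and the difficulty is not merely technical: the tree you build is too large for incomparability to carry any colour information. Take $f(x,y)=i$ iff $x\equiv y\pmod 2$. This is semi-hereditary for $i$ (if $x\equiv z$ and $y\equiv z$ then $x\equiv y$), and in your tree the set of all length-$1$ strings $\{0,1,2,\dots\}$ is an infinite antichain that $\TCAC$ may well hand you. The last entries are $0,1,2,\dots$, which is not $f$-homogeneous; it is weakly $(1-i)$-homogeneous, but \Cref{weaksher} does \emph{not} apply, since that lemma requires semi-heredity for the colour of the weak homogeneity, and $f$ is not semi-hereditary for $1-i$ here (indeed $f(0,1)=f(1,2)=1-i$ but $f(0,2)=i$). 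Your fallback ``if some length repeats infinitely often use K\"onig to find a path'' is not available: the tree is not finitely branching, K\"onig is not an $\RCA_0$ move, and in any case $\TCAC$ has already committed you to the antichain. More generally, in your tree two strings can be $\prec$-incomparable simply because they start at different roots or take different $i$-paths through the same vertices; nothing about their endpoints is forced.

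The paper's construction fixes exactly this by shrinking the tree so that each integer $n$ corresponds to a \emph{single} node $\sigma_n$: the unique maximal weakly-$i$-homogeneous chain ending at $n$, built greedily downward. Semi-heredity guarantees uniqueness and, crucially, that $\sigma_n$ contains \emph{every} $m<n$ with $f(m,n)=i$. Hence $\sigma_m\bot\sigma_n$ forces $f(m,n)=1-i$ outright, so an infinite antichain $(\sigma_{n_j})_j$ yields a set $\{n_j\}$ that is fully $f$-homogeneous for $1-i$ with no appeal to \Cref{weaksher} at all; the lemma is used only in the path case. The missing idea in your attempt is precisely this canonical-chain tree, which converts $\prec$-incomparability into a direct $(1-i)$-edge.
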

\begin{proof}
	Let~\(f:[\NN]^2\to 2\) be a semi-hereditary coloring for the color~\(i<2\). We begin by constructing a tree~\(T\subseteq \NN^{<\NN}\) defined as~\(T\coloneqq \{\sigma_n\mid n\in\NN\}\), where~\(\sigma_n\) is the unique string which is:
	\begin{enumerate}
		\item strictly increasing (as a function), with last element~\(n\)
		\item weak-homogeneous for the color~\(i\), \ie~\(\forall k<|\sigma_n|-1, f(\sigma_n(k), \sigma_n(k+1))=i\)
		\item maximal as a weak-homogeneous set, \ie~\(\forall y < \sigma_n(0), f(y, \sigma_n(0))=1-i\) and
		\(\forall j<|\sigma_n|-1, \forall y\in\ \rrbracket\sigma_n(j), \sigma_n(j+1)\llbracket, f(\sigma_n(j), y)=1-i \lor f(y, \sigma_n(j+1))=1-i\)
	\end{enumerate}
	To ensure existence, unicity and that~\(T\) is a tree, we prove~\(\sigma_n\) is obtained via the following effective procedure. Start with the string~\(n\). If the string~\(s_0\cdot\ldots\cdot s_m\) has been constructed, then look for the biggest integer~\(j<s_0\) such that~\(f(j, s_0)=i\). If there is none, the process ends. Else, the process is repeated with the string~\(j\cdot s_0\cdot\ldots\cdot s_m\).
	
	The string obtained is maximal by construction. It is unique, because at each step, if there are two (or more) integers~\(j_0<j_1\) smaller than~\(s_0\) and such that~\(f(j_0, s_0)=f(j_1, s_0)=i\), then by semi-heredity we have~\(f(j_0, j_1)=1\). This means we will eventually add~\(j_0\) after~\(j_1\). In particular, the string contains all the~\(j<n\) such that~\(f(j, n)=i\).
	Moreover this shows that~\(T\) is a tree, since the procedure is the same at any point during construction.
	
	Now we can apply~\(\TCAC\) to~\(T\), leading to two possibilities. Either there is an infinite path, which is a weakly-homogeneous set for the color~\(i\) thanks to condition 2. And so apply \Cref{weaksher} to obtain a~\(f\)-homogeneous set for the color~\(i\).
	
	Or there is an infinite antichain, which is of the form~\((\sigma_{n_j})_{j\in\NN}\). Let us show the set~\(H\coloneqq \{n_j\mid j\in\NN\}\) is~\(f\)-homogeneous for the color~\(1-i\).
	Indeed, if $f(n_s, n_t) = i$ for some~$s < t$, then $n_s \in \sigma_{n_t}$, since $\sigma_{n_t}$ contains all the elements $y < n_t$ such that $f(y, n_t) = i$. But then $\sigma_{n_s} \prec \sigma_{n_t}$, contradicting the fact that \((\sigma_{n_j})_{j\in\NN}\) is an antichain.
\end{proof}

We end this section by studying~\(\RT_2^2\) with respect to 3-variables forbidden patterns.
As explained in the introduction, there are three basic 3-variables forbidden patterns, yielding the notions of semi-heredity, semi-ancestry and semi-transitivity, respectively. These forbidden patterns induce Ramsey-like statements of the form ``for any 2-coloring of pairs, there exists an infinite set which avoids some kind of forbidden patterns''. This statement applied to semi-transitivity yields a consequence of the Erd\H{o}s-Moser theorem, known to be strictly weaker than Ramsey's theorem for pairs over~$\RCA_0$. We now show that the two remaining forbidden patterns yield statements equivalent to~\(\RT_2^2\). This completes the picture of the reverse mathematics of Ramsey-like theorems for 3-variable forbidden patterns.

\begin{definition}[Semi-ancestry]
	A coloring~\(f:[\NN]^2\to 2\) has \textbf{semi-ancestry} for the color~\(i<2\) if
	$$\forall x\lt y\lt z, f(x,y)=f(x, z)=i \implies f(y, z)=i$$
\end{definition}

Before proving $\RT^2_2$ from the Ramsey-like statement about semi-ancestry over~$\RCA_0$, we need to prove that this statement implies~$\BSig_2$. This is done by proving the following principle.

\begin{definition}[\(\mathsf{D}_2^2\)]
	Every \(\Delta_2^0\) set admits an infinite subset in it or its complement.
\end{definition}

\begin{proposition}
	The statement ``for any 2-coloring of pairs, there exists an infinite set which has semi-ancestry for some color'' implies~\(\mathsf{D}_2^2\) over~\(\RCA_0\).
\end{proposition}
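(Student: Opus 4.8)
The plan is to start from a $\Delta^0_2$ set $D$ and build a $2$-coloring $f:[\NN]^2\to 2$ witnessing the statement, so that any infinite set having semi-ancestry for some color gives an infinite subset inside $D$ or inside $\overline{D}$. Fix a $\Delta^0_2$ approximation $x\mapsto D[s](x)$, with $D(x)=\lim_s D[s](x)$. For a pair $x<y$, I want $f(x,y)$ to "vote" according to whether the approximation of $D$ at the small endpoint $x$ has settled to its final value by stage $y$: concretely, set $f(x,y)=1$ if $D[y](x)=D(x)$ in the sense that the approximation of $x$ has stabilized by stage $y$ — but since we cannot decide that computably, instead use $f(x,y)=1$ iff $D[y]\uh_{x+1}=D[z]\uh_{x+1}$ for all $z$ in some finite window, or more simply $f(x,y)=\chi_D[y](x)\oplus(\text{a stabilization flag})$. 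The cleanest choice: let $f(x,y)=1$ if $D[y](x)=D[x'](x)$ where $x'$ is the least stage $\geq x$ at which $D\uh_{x+1}$ looks stable within the computation — I would actually follow the pattern used in the literature for deriving $\BSig_2$-type principles and set $f(x,y)=1$ iff $x$ is $D$-stable by stage $y$, i.e. $D[y](x)=D(x)$; this is $\Delta^0_2$ but not computable, so instead I use a genuinely computable surrogate: $f(x,y)=1$ iff the number of stages $s\le y$ with $D[s](x)\neq D[s+1](x)$ equals the number of such stages $s\le x$, which is computable and records "no more mind-changes about $x$ occurred between stage $x$ and stage $y$."

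The key step is then to check semi-ancestry forces homogeneity on the final value of $D$. Suppose $H=\{h_0<h_1<\cdots\}$ has semi-ancestry for color $1$: $f(x,y)=f(x,z)=1\implies f(y,z)=1$. For a fixed $h_j$, once $s$ is large enough that $D\uh_{h_j+1}$ has reached its limit, $f(h_j,h_k)=1$ for all sufficiently large $k\in H$; pairing up large indices, semi-ancestry propagates this "$1$" upward, and combined with the fact that along $H$ cofinitely many $f(h_j,h_k)$ are $1$ one extracts an infinite $H'\subseteq H$ on which all pairs get color $1$, meaning all elements of $H'$ are "stable-looking" relative to each other; on such a set the final values $D(h)$ for $h\in H'$ are determined by the early approximation and one shows they are eventually constant, giving an infinite subset inside $D$ or inside $\overline D$. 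If instead $H$ has semi-ancestry for color $0$, i.e. $f(x,y)=f(x,z)=0\implies f(y,z)=0$, I argue symmetrically that only finitely many $h_j$ can be "unstable-looking" relative to larger elements — if infinitely many were, semi-ancestry for $0$ would force an infinite $\prec$-like chain of color-$0$ pairs, which contradicts that each $x$ has only finitely many mind-changes (this is where the finiteness of the number of mind-changes of a $\Delta^0_2$ approximation is used, provable in $\RCA_0$) — so color $0$ also leads back to an infinite homogeneous-for-final-value set.

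The main obstacle I expect is pinning down the exact computable surrogate for "$D\uh_{x+1}$ has stabilized" so that (a) the coloring is genuinely $\RCA_0$-definable and total, (b) semi-ancestry in each color genuinely yields the monotone/transitive propagation I sketched, and (c) the propagation argument can be carried out with only $\BSig^0_1$ / $\ISig^0_1$ available in $\RCA_0$ rather than secretly using $\BSig^0_2$ — since the whole point of this proposition is to deduce $\mathsf{D}^2_2$ (and hence, via the standard equivalence, $\BSig^0_2$) rather than assume it. I would handle (c) by making sure that at every stage the relevant sets ("the $h_j$ that still look unstable") are finite by an honest $\Sigma^0_1$ bound coming from the mind-change count, never from a $\Sigma^0_2$ statement. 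Once the surrogate is chosen correctly, the two cases (semi-ancestry for $0$ vs. for $1$) dualize cleanly and the extraction of the infinite monochromatic-in-$D$ subset is routine.
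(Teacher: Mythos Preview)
Your approach has a genuine gap, rooted in the choice of coloring. The mind-change coloring you settle on, $f(x,y)=1$ iff there is no mind-change about $x$ in the interval $(x,y]$, records \emph{stability} information only, not the \emph{value} of $D$. This causes both of your case analyses to break down.

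In the color-$1$ case you write ``for a fixed $h_j$, once $s$ is large enough that $D\uh_{h_j+1}$ has reached its limit, $f(h_j,h_k)=1$ for all sufficiently large $k$.'' This is false for your coloring: if $h_j$ has even a single mind-change at some stage $s>h_j$, then $f(h_j,h_k)=0$ for \emph{every} $h_k\geq s$, not $1$. So the limit colour of $h_j$ is $0$, not $1$, whenever any mind-change occurs after stage $h_j$; your propagation step never gets started in that (generic) situation. And even if you did obtain an infinite $f$-homogeneous set for colour $1$, the conclusion is only that $D(h)=D[h](h)$ is computable on that set, not that these values are ``eventually constant''; you would still need an application of $\RT^1_2$, which you do not mention. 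In the color-$0$ case, the claimed contradiction fails: an infinite chain of colour-$0$ pairs $(h_0,h_1),(h_1,h_2),\ldots$ just says that each $h_j$ has some mind-change before $h_{j+1}$. These are mind-changes about \emph{different} integers, so nothing contradicts ``each $x$ has only finitely many mind-changes.''

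The paper's proof avoids all of this by taking the value-based coloring $f(x,y)\coloneqq\chi_{A_y}(x)$. Then semi-ancestry for colour $1$ literally reads $x\in A_y\wedge x\in A_z\Rightarrow y\in A_z$. Either $B\subseteq\overline A$ and we are done, or pick $x\in B\cap A$; then $x\in A_y$ for cofinitely many $y\in B$, semi-ancestry gives $y\in A_z$ for all larger $z\in B$, and since $\lim_z\chi_{A_z}(y)$ exists this forces $y\in A$. Hence a cofinite tail of $B$ lies in $A$. Colour $0$ is handled by the symmetric argument with $A$ and $\overline A$ interchanged. The key difference is that the paper's coloring carries the actual $\Delta^0_2$ data, so semi-ancestry transfers membership in $A$ directly, with no auxiliary extraction step.
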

\begin{proof}
	Let~\(A\) be a~\(\Delta_2^0\) set whose approximations are~\((A_t)_{t\in\NN}\).
	We define the coloring~\(f(x, y)\coloneqq  \chi_{A_y}(x)\), and use the statement of the proposition to obtain an infinite set~\(B\) that has semi-ancestry for some color.
	
	If~\(B\) has semi-ancestry for the color 1, then~\(\forall x\lt y\lt z\in B, x\in A_y\land x\in A_z\implies y\in A_z\).
	Now either~\(B\subseteq\overline{A}\) and we are done.
	Or~\(\exists x\in A\cap B\), which means~\(\forall^\infty y\in B, x\in A_y\), implying that~\(\forall^\infty y>x\in B, \forall z>y\in B, y\in A_z\) by semi-ancestry, \ie~\(\forall^\infty y>x\in B, y\in A\). So we can compute a subset~\(H\) of~\(B\) which is infinite and such that~\(H\subseteq A\).
	
	This argument also works when~\(B\) has semi-ancestry for the color 0, we just need to switch~\(A\) and~\(\overline{A}\), as well as~\(\in\) and~\(\notin\), when needed.
%	And if \(B\) has semi-ancestry for the color 0, then \(\forall x\lt y\lt z\in B, x\notin A_y\land x\notin A_z\implies y\notin A_z\).
%	Now either \(B\subseteq A\) and we are done.
%	Or \(\exists x\in \overline{A}\cap B\), which means \(\forall^\infty y\in B, x\notin A_y\), implying that \(\forall^\infty y>x\in B, \forall z>y\in B, y\notin A_z\) by semi-ancestry, \ie \(\forall^\infty y>x\in B, y\notin A\). So we can compute a subset \(H\) of \(B\) which is infinite and such that \(H\subseteq\overline{A}\).
\end{proof}

\begin{corollary}\label[corollary]{p1-impl-bsigma2}
	The statement ``for any 2-coloring of pairs, there exists an infinite set which has semi-ancestry for some color'' implies~\(\BS^0_2\) over~\(\RCA_0\).
\end{corollary}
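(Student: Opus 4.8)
The plan is to deduce the pigeonhole principle $\forall k\,\RT^1_k$ — which is equivalent to $\BSig^0_2$ over $\RCA_0$ by a theorem of Hirst — from the statement about semi-ancestry. So fix $k$ and a colouring $g:\NN\to k$, and aim for an infinite $g$-homogeneous set. I would apply the semi-ancestry statement to the (non-stable) $2$-colouring $f:[\NN]^2\to 2$ defined by $f(\{x,y\})=1$ iff $g(x)<g(y)$, where $x<y$; this produces an infinite set $B=\{x_0<x_1<\cdots\}$ with semi-ancestry for some colour $i<2$. Write $v_n=g(x_n)$, an $\NN$-valued sequence bounded by $k$.

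The heart of the argument is that, read contrapositively on triples $x_m<x_{n-1}<x_n\in B$, semi-ancestry of $B$ for a fixed colour forces a monotonicity pattern on $(v_n)$. If $i=0$, I would show that whenever $x_n$ is not a strict record high of $(v_m)_{m<n}$, then already $v_{n-1}\ge v_n$ — pick the largest $m<n$ with $v_m\ge v_n$ and apply semi-ancestry to $x_m,x_{n-1},x_n$. Since record highs have strictly increasing $v$-values bounded by $k$, there are at most $k$ of them, so $(v_n)$ is eventually non-increasing, hence eventually constant, and the corresponding tail of $B$ is the desired homogeneous set. If $i=1$, the same manoeuvre gives: whenever $v_n\le v_{n-1}$, one has $v_n=\min_{m\le n}v_m$, \ie $v_n$ equals the running minimum $\mu_n$. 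The running minimum is non-increasing and bounded below, hence eventually equal to some $\mu^\ast$; and $(v_n)$ cannot be strictly increasing from any point on (that would embed $[0,k{+}1)$ into $[0,k)$), so $v_n\le v_{n-1}$, and therefore $v_n=\mu^\ast$, for infinitely many $n$ — again yielding an infinite $g$-homogeneous subset of $B$.

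I expect the main obstacle to be formalising the case $i=1$ in $\RCA_0$: the naive ``peel off the elements of $B$ with $v>v_0$ and recurse, stopping when the recursion stalls'' argument runs a recursion of length up to $k$ whose stopping point is located by a $\Sigma^0_2$ search, which would make the derivation of $\BSig^0_2$ circular. The running-minimum reformulation above avoids this — $\mu_n$ is $\Delta^0_1$ in $g$ and $B$, its stabilisation is a statement about a bounded set of naturals, and the only non-elementary ingredient left is finite pigeonhole on $[0,k]$, which $\RCA_0$ proves. (One should likewise check that the $i=0$ analysis sidesteps the same trap, which it does by the ``record high'' bookkeeping rather than by a recursion.)
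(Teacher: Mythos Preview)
Your argument is correct, and the care you take to stay inside $\RCA_0$ --- computing the running minimum and the maximal attained value via bounded $\Sigma^0_1$ comprehension on $[0,k)$, and bounding the search for a non-increase by $k$ steps --- is exactly what is needed to avoid the circularity you flag. One small check you leave implicit in the $i=0$ case: to locate the ``last record high'' $N$ without a $\Sigma^0_2$ search, take $v^\ast=\max\{j<k:\exists n\,v_n=j\}$ (again bounded $\Sigma^0_1$ comprehension) and any $N$ with $v_N=v^\ast$; then every $n>N$ fails to be a record high, and your monotonicity claim kicks in.

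The paper, however, takes a different route. Rather than targeting $\forall k\,\RT^1_k$, the proposition immediately preceding the corollary shows that the semi-ancestry statement implies $\mathsf{D}^2_2$: given a $\Delta^0_2$ set $A$ with approximations $(A_t)$, one applies semi-ancestry to the coloring $f(x,y)\coloneqq\chi_{A_y}(x)$ and argues that the resulting set $B$ is (after trimming) contained in $A$ or in $\overline A$. The corollary is then the one-line citation $\RCA_0\vdash\mathsf{D}^2_2\implies\BSig^0_2$ (Chong--Lempp--Yang). So the paper's proof is a short appeal to a strictly stronger intermediate already in hand, whereas yours is a direct combinatorial derivation of the weaker $\forall k\,\RT^1_k$ via Hirst's equivalence. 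The paper's route buys brevity plus the dividend of $\mathsf{D}^2_2$; yours is more self-contained, avoids $\Delta^0_2$ approximations altogether, and makes the finitary pigeonhole content of the argument explicit, at the price of the two-case monotonicity analysis.
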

\begin{proof}
	Immediate, since~\(\RCA_0\vdash\mathsf{D}_2^2\implies\BS^0_2\), see \cite[Theorem 1.4]{chong2010role}.
\end{proof}

\begin{proposition}\label[proposition]{p1-impl-rt22}
	The statement  ``for any 2-coloring of pairs, there exists an infinite set which has semi-ancestry for some color'' implies~\(\RT_2^2\) over~\(\RCA_0\) and over the computable reduction.
\end{proposition}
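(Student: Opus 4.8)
The plan is to feed the given coloring \emph{unchanged} into the principle in the statement (call it \(P_1\)) and then dissect the resulting semi-ancestry set by an elementary case split. First I would record, via \Cref{p1-impl-bsigma2}, that \(P_1\) proves \(\BS^0_2\) over \(\RCA_0\); this is the one ingredient beyond bare combinatorics and it is needed in Case B below. Now fix \(f:[\NN]^2\to 2\). Applying \(P_1\) to \(f\) yields an infinite \(H\subseteq\NN\) that has semi-ancestry for some color \(i<2\), i.e. \(\forall x\lt y\lt z\in H\), \(f(x,z)=f(y,z)=i\) is impossible unless \(f(x,y)=i\) — wait, semi-ancestry for color \(i\) says \(f(x,y)=f(x,z)=i\implies f(y,z)=i\). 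For \(x\in H\) let \(N_x\coloneqq\{y\in H\mid x\lt y\ \wedge\ f(x,y)=i\}\), an \(H\oplus f\)-computable set.

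\textbf{Case A:} some \(x\in H\) has \(N_x\) infinite. Then \(N_x\) is already \(f\)-homogeneous for color \(i\): if \(y\lt z\) lie in \(N_x\) then \(f(x,y)=f(x,z)=i\), so semi-ancestry gives \(f(y,z)=i\). This produces the desired infinite homogeneous set. \textbf{Case B:} otherwise, for every \(x\in H\) cofinitely many \(y\in H\) satisfy \(f(x,y)=1-i\) (this is simply the negation of Case A). Build a subset greedily: \(y_0\coloneqq\min H\), and \(y_{j+1}\coloneqq\) the least element of \(H\) above \(y_j\) with \(f(y_k,y_{j+1})=1-i\) for all \(k\leqslant j\). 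Then \(B\coloneqq\{y_j\mid j\in\NN\}\) is \(f\)-homogeneous for color \(1-i\) by construction. The point requiring justification is that each search succeeds: for each \(k\leqslant j\) the set \(\{y\in H\mid f(y_k,y)=i\}\) is finite hence bounded, and collecting these finitely many bounds into a single bound \(M\) — this is \(\BPi_1\), equivalently \(\BS^0_2\) — shows that every \(y\in H\) above \(M\) satisfies the required inequalities, and infinitely many such \(y\) exist since \(H\) is infinite. So the recursion is total and \(B\) exists.

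For the computable reduction take \(\widehat f\coloneqq f\). Given any \(P_1\)-solution \(H\) of \(\widehat f\), with semi-ancestry color \(i\): in Case A the set \(N_x\) (for the appropriate \(x\)) is an \(H\oplus f\)-computable infinite \(f\)-homogeneous set, and in Case B the set \(B\) constructed above is \(H\oplus f\)-computable and infinite. Either way an \(H\oplus f\)-computable solution of \(f\) exists, which is all the definition of \(\leqslant_c\) demands; in particular one need not decide effectively which case holds, nor recover \(i\).

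The only genuine obstacle is the totality of the greedy step in Case B inside \(\RCA_0\): that a finite family of finite exceptional sets admits a common bound. This is exactly \(\BPi_1=\BS^0_2\), which is why it is worth isolating \Cref{p1-impl-bsigma2} beforehand; the computable-reduction version is then immediate since in that setting there is no induction to worry about and only the existence of an \(\widehat S\oplus I\)-computable solution is required.
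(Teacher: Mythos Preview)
Your proposal is correct and follows essentially the same strategy as the paper: apply the principle to \(f\) itself, split on whether some element has infinitely many color-\(i\) neighbours (in which case semi-ancestry immediately yields homogeneity), and otherwise observe that the restricted coloring is limit-\(1{-}i\) everywhere and extract a homogeneous set using \(\BS^0_2\). The paper handles Case~B by citing a known fact about stable colorings with constant limit, whereas you spell out the greedy thinning explicitly and identify the \(\BPi_1\) bounding step; these amount to the same thing.
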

\begin{proof}
	Let~\(f:[\NN]^2\to 2\) be a coloring. We can apply the statement to obtain an infinite set~\(A\) which has semi-ancestry for the color~\(i\), \ie~\(\forall x\lt y\lt z\in A, f(x, y)=i\land f(x, z)=i\implies f(y, z)=i\). There are two possibilities. Either there exists~\(a\in A\) such that~\(\exists^\infty b>a\in A, f(a, b)=i\), in which case all such elements~\(b\) form an infinite~\(f\)-homogeneous set due to the property of~\(A\).
	Otherwise any~\(a\in A\) verifies~\(\forall^\infty b>a, f(a, b)=1-i\), \ie all the elements of~\(A\) have a limit color equal to~\(1-i\) for the coloring~\(f\uh_{[A]^2}\). Thus we can use~\(\BS^0_2\) (\Cref{p1-impl-bsigma2}) to compute an infinite homogeneous set (see \cite[Proposition 6.2]{dzhafarov2020reduction}).
\end{proof}

The proof that the Ramsey-like statement about semi-heredity implies Ramsey's theorem for pairs is indirect, and uses the Ascending Descending Sequence principle.

\begin{proposition}\label[proposition]{p2-impl-ads}
	The statement ``for any 2-coloring of pairs, there exists an infinite set which is semi-hereditary for some color'' implies~\(\ADS\) over~\(\RCA_0\) and over the computable reduction.
\end{proposition}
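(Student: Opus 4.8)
The plan is to establish $\ADS$ in its linear‑order formulation: given an infinite linear order $(L,<_L)$ on $\NN$, find an infinite set that is $<_\NN$‑increasing and monotone for $<_L$. I would define $g\colon[\NN]^2\to 2$ by setting, for $x<_\NN y$, $g(\{x,y\})=1$ iff $x<_L y$; a $g$‑homogeneous set for color $1$ (resp.\ $0$) is then an ascending (resp.\ descending) sequence of $L$. Applying the hypothesis to $g$ yields an infinite $H$ that is semi‑hereditary for some color, and---after replacing $<_L$ by its reverse if that color is $0$ (which swaps ascending and descending sequences)---I may assume it is semi‑hereditary for color $1$. Writing $D_c:=\{a\in H : a<_\NN c \text{ and } a<_L c\}$ for $c\in H$, semi‑heredity says exactly that each $D_c$ is ordered the same way by $<_\NN$ and by $<_L$.

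The key point is a rigidity phenomenon: if $c<_\NN c'$ in $H$ and $c<_L c'$, then $c\in D_{c'}$ and in fact $D_{c'}\cap\{a:a<_\NN c\}=D_c$, so $|D_{c'}|\ge|D_c|+1$. Put $r(c):=|D_c|$. Then for every value $v$, any two elements of $\{c\in H : r(c)=v\}$ are $<_L$‑ordered oppositely to $<_\NN$, i.e.\ $\{c\in H : r(c)=v\}$ is a $<_L$‑descending sequence. So I split into two cases. Case (a): some value $v$ is attained infinitely often by $r$; then $\{c\in H : r(c)=v\}$ is an infinite $<_L$‑descending sequence, and we are done. Case (b): each value of $r$ is attained only finitely often, whence by $\BSig_1$ (which holds in $\RCA_0$) $r(c)\to\infty$; in particular only finitely many $c$ have $D_c=\varnothing$, so $a_1:=\min_{<_L}H$ exists.

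In case (b) I build an infinite $<_L$‑ascending sequence $a_1<_\NN a_2<_\NN\cdots$ in $H$. Since $a_1$ is the $<_L$‑least, hence (by semi‑heredity) the $<_\NN$‑least, element of $D_c$ for cofinitely many $c$, the one‑term sequence $(a_1)$ is an initial segment of $D_c\cup\{c\}$ (for the common $<_\NN$/$<_L$ order) for infinitely many $c$. Inductively, suppose $a_1<_\NN\cdots<_\NN a_k$ is an ascending sequence that is an initial segment of $D_c\cup\{c\}$ for infinitely many $c$; for each such $c$ with $r(c)>k$, let $b_c$ be the element of $D_c$ immediately after $a_k$. If $b_c\ne b_{c'}$ for two such $c<_\NN c'$, say $b_c<_L b_{c'}$, then $b_c<_\NN c'$ and $b_c<_L b_{c'}<_L c'$ give $b_c\in D_{c'}$ with $a_k<_L b_c<_L b_{c'}$, contradicting that $a_k$ and $b_{c'}$ are consecutive in $D_{c'}$; hence the $b_c$ share a common value $a_{k+1}$, and (using $r(c)\to\infty$) $(a_1,\dots,a_{k+1})$ is still an initial segment of $D_c\cup\{c\}$ for infinitely many $c$. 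The resulting $(a_k)_k$ is an infinite $<_L$‑ascending sequence. Semi‑heredity is used precisely to force the $D_c$ to be coherent initial segments of one another, which makes this ``limit'' sequence canonical and computable from $H\oplus L$; for an arbitrary linear order this very step amounts to $\ADS$ itself and is unavailable in $\RCA_0$. The computable reduction follows similarly, since in each case an $(H\oplus L)$‑computable solution of $L$ has been exhibited (non‑uniformly, according to which case occurs).

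The main obstacle is verifying, in case (b), that the recursion never stalls---that at every stage infinitely many admissible $c$ remain---which is exactly the place where the coherence imposed by semi‑heredity is needed, and where some care is required about the amount of induction used to formalize the argument in $\RCA_0$ (it can be arranged using $\BS^0_2$, which the statement implies).
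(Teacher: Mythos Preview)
Your route diverges from the paper's and has two genuine gaps.

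\textbf{Gap in the base case.} From ``only finitely many $c$ have $D_c=\varnothing$'' you cannot conclude that $\min_{<_L}H$ exists: having $D_c=\varnothing$ only says $c$ is $<_L$-below the \emph{$<_\NN$-earlier} elements of $H$, not below the later ones. The minimum does exist in case~(b), but this needs an argument you have not given.

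\textbf{Gap in the inductive step.} The phrase ``say $b_c<_L b_{c'}$'' is not without loss of generality. Take $H=\NN$ with the linear order
\[
0 \;<_L\; 3 \;<_L\; 4 \;<_L\; 5 \;<_L\; \cdots \;<_L\; 1 \;<_L\; 2.
\]
One checks that $g$ is semi-hereditary for color~$1$ and that every value of $r$ is attained finitely often, so you are in case~(b) with $a_1=0$. For $c=2$ you get $D_2=\{0,1\}$, hence $b_2=1$; for $c'=100$ you get $D_{100}=\{0,3,4,\dots,99\}$, hence $b_{100}=3$. Here $b_{c'}=3<_L 1=b_c$, and since $b_{c'}=3\not<_\NN 2=c$ you cannot place $b_{c'}$ into $D_c$; your contradiction never materializes. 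So $b_c$ is \emph{not} constant over all admissible $c$, contrary to what you claim.

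\textbf{What is missing.} Both gaps are repaired by one observation you never isolate: for $x<_\NN y$ in $H$, if $g(x,y)=0$ then $g(x,z)=0$ for every $z>_\NN y$ in $H$ (if $g(y,z)=0$ use transitivity of $<_L$; if $g(y,z)=1$ the contrapositive of semi-heredity forces $g(x,z)\neq 1$). This is exactly the lemma the paper proves first, and once you have it the paper's argument is much shorter than yours: either there are infinitely many $x\in H$ with some $y>_\NN x$ satisfying $g(x,y)=0$, in which case the persistence above lets you thread a $<_L$-descending sequence greedily; or cofinitely many $x\in H$ satisfy $g(x,y)=1$ for all $y>_\NN x$, and discarding finitely many elements yields an ascending sequence. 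No $r$-function, no $\min_{<_L}H$, and no appeal to $\BSig_2$ is required.
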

\begin{proof}
	Let~$\L = (\NN, <_\L)$ be a linear order. Let
	\(f: [\NN]^2 \to 2\) be the coloring defined by \(f(\{x, y\}) = 1\) iff $<_\L$ and $<_\NN$ coincide on $\{x, y\}$. By the statement of the proposition, there is an infinite set~\(H\) on which the coloring is semi-hereditary for some color~\(i\).
	
	Before continuing, note that if there is a pair~\(x<y\in H\) such that~\(f(x, y)=1-i\) then~\(\forall z>y\in H, f(x, z)=1-i\). Indeed, either~\(f(y, z)=1-i\), in which case by transivity of~\(<_\L\) we have~\(f(x, z)=1-i\). Or~\(f(y, z)=i\), in which case by semi-heredity~\(f(x, z)=1-i\), because otherwise it would imply that~\(f(x, y)=i\).
	
	Now if~\(\exists^\infty x\in H, \exists y>x\in H, f(x, y)=1-i\), then we construct an infinite decreasing sequence by induction. Suppose the sequence~\(x_0 >_\L \ldots >_\L  x_{n-1}\) has already be constructed, and we have~\(m\in H\) such that~\(f(x_{n-1}, m)=1-i\), then we look for a~\(\{x<y\}\) such that~\(x>m\) and~\(f(x, y)=1-i\). By the previous remark we have that~\(f(x_{n-1}, x)=1-i\), \ie~\(x_{n+1} >_\L x\), so we extend the sequence with~\(x\) and redefine~\(m\coloneqq y\).
	
	Else~\(\forall^\infty x\in H, \forall y>x\in H, f(x, y)=i\), and so getting rid of finitely many elements yields an infinite increasing sequence.
\end{proof}

\begin{corollary}\label[corollary]{p2-impl-rt22}
	The statement ``for any 2-coloring of pairs, there exists an infinite set which is semi-hereditary for some color'' implies~\(\RT_2^2\) over~\(\RCA_0\).
	%And~\(\RT_2^2\) is computably reducible to this statement.
\end{corollary}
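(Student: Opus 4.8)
The plan is to route through the principle $\SHER$, exploiting a match of shapes. Write $P_2$ for the statement ``for any $2$-coloring of pairs, there exists an infinite set which is semi-hereditary for some color''. Applied to a coloring $f$, the principle $P_2$ hands back an infinite set $H$ on which $f$ is semi-hereditary, and $\SHER$ is precisely the principle asserting that a semi-hereditary coloring has an infinite homogeneous set. So once we know that $\SHER$ follows from $P_2$, a two-stage argument --- first $P_2$ to pass to a semi-hereditary sub-coloring, then $\SHER$ to homogenise it --- produces an infinite homogeneous set for an \emph{arbitrary} coloring, which is $\RT_2^2$.

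For the first stage, \Cref{p2-impl-ads} gives $\RCA_0 \vdash P_2 \implies \ADS$. Chaining this with \Cref{ads-impl-tcac} ($\ADS \implies \TCAC$) and \Cref{tcac-impl-sher} ($\TCAC \implies \SHER$) yields $\RCA_0 \vdash P_2 \implies \SHER$; this is the promised indirect use of the Ascending Descending Sequence principle.

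Now I would take an arbitrary $f : [\NN]^2 \to 2$, apply $P_2$ to obtain an infinite $H \subseteq \NN$ such that $f \uh [H]^2$ is semi-hereditary for some color $i < 2$, let $h : \NN \to H$ be the increasing enumeration of $H$, and put $g(\{a,b\}) \coloneqq f(\{h(a),h(b)\})$. Then $g : [\NN]^2 \to 2$ is a total semi-hereditary coloring, so by $\SHER$ there is an infinite $g$-homogeneous set $B \subseteq \NN$, and since $h$ is strictly increasing, $h(B) \subseteq H$ is an infinite $f$-homogeneous set. This proves $\RT_2^2$ over $\RCA_0$. The same chain goes through over computable reducibility, each of \Cref{p2-impl-ads}, \Cref{ads-impl-tcac}, \Cref{tcac-impl-sher} having a $\leqslant_c$ version, although only the $\RCA_0$ statement is claimed here.

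I do not expect a genuine obstacle: all the substance is carried by the cited propositions --- in particular the descending/ascending-sequence construction behind \Cref{p2-impl-ads} and the tree construction behind $\TCAC \implies \SHER$. The only points requiring care are bookkeeping: checking that ``semi-hereditary for some color $i$'' is exactly the input format of $\SHER$, and that restricting $f$ to an infinite set and re-indexing by $h$ both preserves semi-heredity and transports homogeneous sets back --- these are immediate and formalise in $\RCA_0$ with no extra induction.
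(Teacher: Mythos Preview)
Your proof is correct and follows essentially the same approach as the paper: both argue that $P_2 \implies \SHER$ via the chain $P_2 \implies \ADS \implies \TCAC \implies \SHER$ (citing the same three propositions), and then observe that $P_2 + \SHER$ yields $\RT_2^2$ by first passing to a semi-hereditary subcoloring and then homogenising. You spell out the re-indexing step more carefully than the paper, which simply writes ``$\RT_2^2 \iff S + \SHER$ by definition''. One small caution: your remark that ``the same chain goes through over computable reducibility'' is potentially misleading, since the overall argument uses $P_2$ twice (once directly, once via $\SHER$), and the paper explicitly leaves open whether $\RT_2^2 \leqslant_c P_2$; but as you note, only the $\RCA_0$ statement is claimed.
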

\begin{proof}
	This comes from the fact that~\(\RT_2^2\iff S+\SHER\) by definition (with~\(S\) denoting the statement in question). 
	And we have~\(\RCA_0\vdash S\implies\SHER\) by using \Cref{p2-impl-ads}, \Cref{ads-impl-tcac} and \Cref{tcac-impl-sher}.
\end{proof}

\begin{remark}\label[remark]{rk-question}
Let~$S$ denote the statement ``for any 2-coloring of pairs, there exists an infinite set which is semi-hereditary for some color''. The proof that $S$ implies~$\RT^2_2$ over~$\RCA_0$ involves two applications of~$S$. The first one to obtain an infinite set over which the coloring is semi-hereditary, and a second one to solve~$\SHER$ using the fact that~$S$ implies~$\ADS$, which itself implies~$\SHER$.
It is unknown whether~$\RT^2_2$ is computably reducible to~$S$.
\end{remark}

\section{Stable counterparts: \(\SADS\) and~\(\TCAC[stable\ \tce]\)}\label[section]{sect:stable}
Cholak, Jockusch and Slaman~\cite{cholak2001strength} made significant progress in the understanding of Ramsey's theorem for pairs by dividing the statement into a stable and a cohesive part. 

\begin{definition}
A coloring~$f : [\NN]^2 \to k$ is \textbf{stable} if for every~$x \in \NN$, $\lim_y f(x, y)$ exists.
A linear order $\L = (\NN, <_\L)$ is \textbf{stable} if it is of order type $\omega + \omega^{*}$.	
\end{definition}

We call $\SRT^2_k$ and $\SADS$ the restriction of $\RT^2_k$ and $\ADS$ to stable colorings and stable linear orders, respectively. Given a linear order $\L = (\NN, <_\L)$, the coloring corresponding to the order is stable iff the linear order is of order type $\omega+\omega^{*}$, or $\omega+k$ or $k+\omega^*$. In particular, $\SRT^2_2$ implies $\SADS$ over~$\RCA_0$.

In this section, we study the stable counterparts of $\TCAC$ and $\SHER$, and prove that they are equivalent over~$\RCA_0$. We show~\(\SADS\) implies~\(\TCAC[stable\ \tce]\) over~$\RCA_0$. It follows in particular that every computable instance of \(\TCAC[stable\ \tce]\) admits a low solution.

\begin{definition}[Stable tree, Dorais \cite{dorais2012hajnal}]
	A tree~\(T\subseteq \NN^{<\NN}\) is \textbf{stable} when for every~\(\sigma\in T\)
	either~\(\forall^\infty\tau\in T, \sigma\bot\tau\) or~\(\forall^\infty\tau\in T, \sigma\centernot\bot\tau\)
\end{definition}

Note that any stable finitely branching tree admits a unique path.

\begin{proposition}
	\(\RCA_0\vdash\SADS\implies\TCAC[stable\ \tce]\)
\end{proposition}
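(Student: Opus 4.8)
The plan is to mimic the proof of \Cref{sher-impl-tcac} (more precisely, the half of \Cref{big-equiv} routing through $\SHER$), but now tracking stability. So first I would take a stable c.e.\ tree $T \subseteq \NN^{<\NN}$ as an instance of $\TCAC[stable\ \tce]$; handling a c.e.\ tree rather than a computable one is done exactly as in \Cref{implyce}, and one must check that the auxiliary computable tree $S$ produced there inherits stability from $T$, which it does since the extra coordinate $s_j$ is determined by the first. Then, rather than appealing to $\TCAC$ on $T$, I would convert $T$ into a stable linear order and feed it to $\SADS$: define the order $<_0$ on (a computable infinite subset of) $T$ by $\sigma <_0 \tau \iff \sigma \prec \tau \lor (\sigma \bot \tau \land \sigma(d) <_\NN \tau(d))$ with $d = \min\{k : \sigma(k) \neq \tau(k)\}$, exactly as in the proof of \Cref{ads-impl-tcac}.

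The crux is to verify that stability of the tree $T$ makes $(T, <_0)$ a \emph{stable} linear order, i.e.\ of type $\omega + \omega^*$ (or a finite variant). This is where the real work lies. Fix $\sigma \in T$; by stability of $T$, either $\forall^\infty \tau \in T,\ \sigma \centernot\bot \tau$, or $\forall^\infty \tau \in T,\ \sigma \bot \tau$. In the first case cofinitely many $\tau$ comparable with $\sigma$ are in fact strict extensions of $\sigma$ (there are only finitely many strings below $\sigma$), hence satisfy $\sigma <_0 \tau$; so $\sigma$ has only finitely many $<_0$-predecessors and lies in the ``$\omega$'' part. In the second case, for cofinitely many $\tau$ we have $\sigma \bot \tau$, and then $\sigma <_0 \tau$ or $\tau <_0 \sigma$ according to whether $\sigma(d) <_\NN \tau(d)$ or not at the first difference $d$; I would argue that for cofinitely many such $\tau$ one gets $\tau <_0 \sigma$. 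The point is that since $T$ is finitely branching, at each node $\rho \preceq \sigma$ only finitely many immediate successors exist, so the set of $\tau$ with $\tau \bot \sigma$ and $\sigma <_0 \tau$ — these branch off at some $\rho \prec \sigma$ through an immediate successor that is $<_\NN$-larger than the one $\sigma$ uses, or the root position — is itself confined to finitely many subtrees hanging off the finite path to $\sigma$; one must check this set is finite, using the finite-branching hypothesis and $\BSig^0_1$. If it is, $\sigma$ has cofinitely many $<_0$-successors, so it is in the ``$\omega^*$'' part, and we have shown every element is of type $\omega$ or $\omega^*$, i.e.\ the order is stable. (One should also note $<_0$ has no $<_0$-least or $<_0$-greatest element issues are irrelevant: $\omega + \omega^*$ is permitted, as are $\omega + k$ and $k + \omega^*$.)

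Having established that $(T, <_0)$ is a stable linear order, apply $\SADS$ to obtain an infinite ascending or descending sequence $(\sigma_i)$ for $(T, <_0)$, and then run verbatim the remaining argument of \Cref{ads-impl-tcac}: in the descending case, either $\forall^\infty i,\ \sigma_i \centernot\bot \sigma_{i+1}$, giving an infinite $\prec$-decreasing sequence of strings, which is impossible, or $\exists^\infty i,\ \sigma_i \bot \sigma_{i+1}$, and the induction on $m$ showing $h_k \centernot\succ h_{k+m}$ produces an infinite antichain; symmetrically in the ascending case one gets either an infinite path or an infinite antichain. This yields a solution to the original instance of $\TCAC[stable\ \tce]$, completing the reduction $\RCA_0 \vdash \SADS \implies \TCAC[stable\ \tce]$.

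I expect the main obstacle to be the stability verification in the second paragraph — specifically, turning the combinatorial statement ``a finitely branching stable tree gives an $\omega + \omega^*$ order under $<_0$'' into a clean argument within $\RCA_0$, being careful with the two collapse cases ($\sigma$ with finitely many predecessors vs.\ finitely many successors) and with the amount of induction/bounding ($\BSig^0_1$, which is available in $\RCA_0$) needed to conclude finiteness of the relevant sets of strings. Everything downstream is a direct citation of \Cref{ads-impl-tcac} and \Cref{implyce}, and the corollary about low solutions then follows since $\SADS$ is known to admit low solutions over $\RCA_0$ and lowness is preserved under the computable reductions involved.
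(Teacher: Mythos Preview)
Your overall strategy matches the paper's: define the Kleene--Brouwer-style order $<_0$ on the stable tree, verify it is stable as a linear order, apply $\SADS$, and finish exactly as in \Cref{ads-impl-tcac}. The gap is in your stability verification in the case $\forall^\infty \tau \in T,\ \sigma \bot \tau$.

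You assert that the set $\{\tau \in T : \sigma \bot \tau \text{ and } \sigma <_0 \tau\}$ is finite, arguing it is contained in finitely many subtrees branching off the path to $\sigma$. The containment is correct, but the conclusion is not: one of those finitely many subtrees can be infinite. Concretely, take the stable tree $T = \{1^n : n \in \NN\} \cup \{0\}$ and $\sigma = 0$; then every $1^n$ with $n \geq 1$ satisfies $\sigma \bot 1^n$ and $\sigma <_0 1^n$, so the set is infinite and $\sigma$ belongs to the $\omega$ part, not the $\omega^*$ part. In general, a node $\sigma$ with $\forall^\infty \tau,\ \sigma \bot \tau$ can land on \emph{either} side of the order, depending on whether the unique infinite branch veers off to the $<_\NN$-left or the $<_\NN$-right of $\sigma$.

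The paper's fix is exactly this: among the finitely many immediate successors of prefixes of $\sigma$ that are not themselves prefixes of $\sigma$, a pigeonhole argument singles out a unique node $\tau$ with infinitely many extensions in $T$ (uniqueness uses stability), and then $\sigma$ is in the $\omega$ part or the $\omega^*$ part according to whether $\sigma <_0 \tau$ or $\tau <_0 \sigma$. Note that this pigeonhole step requires $\BSig^0_2$, not $\BSig^0_1$ (``has infinitely many extensions'' is $\Pi^0_2$); the paper obtains $\BSig^0_2$ from $\SADS$ via $\mathsf{PART}$. Your invocation of $\BSig^0_1$ would not suffice here even if the rest of the argument were repaired.
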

\begin{remark}
	In the proof below, we use the statement that~\(\RCA_0\vdash \SADS\implies\BS_2^0\). This is because~\(\RCA_0\vdash \SADS\implies\mathsf{PART}\) (\cite[Proposition 4.6]{hirschfeldt2007combinatorial}) and \(\RCA_0\vdash \mathsf{PART}\iff \BS_2^0\) (\cite[Theorem 1.2]{chong2010role}). 
\end{remark}
\begin{proof}
	Let~\(T\subseteq\NN^{<\NN}\) be an infinite \ce tree which is stable.
	
	Consider the total order~\(<_0\), defined by~\(\sigma<_0\tau \iff \sigma\prec\tau\lor(\sigma\bot\tau\land \sigma(d)<\tau(d))\) where~\(d\coloneqq \min\{y\mid \sigma(y)\neq\tau(y)\}\).
	We show that it is of type~\(\omega+\omega^*\), \ie
	\[\forall\sigma\in T, (\forall^\infty\tau\in T \sigma<_0\tau)\;\lor\;(\forall^\infty\tau\in T, \tau<_0\sigma)\]
	
	So let~\(\sigma\in T\), there are two possibilities. Either~\(\forall^\infty\tau\in T, \sigma\centernot\bot\tau\), meaning we even have~\(\forall^\infty\tau\in T, \sigma\prec\tau\), which directly implies~\(\forall^\infty\tau\in T, \sigma<_0\tau\).
	
	Or~\(\forall^\infty\tau\in T, \sigma\bot\tau\). In this case, we consider all the nodes~\(\tau\) successors of a prefix of~\(\sigma\) but not prefix of~\(\sigma\), there are finitely many of them, because there are finitely many prefixes of~\(\sigma\) and no infinitely-branching node (WLOG, as otherwise there would be a computable infinite antichain). So we can apply the pigeon-hole principle, by using~\(\BS_2^0\), to deduce that there is a certain~\(\tau\) which has infinitely many successors. 
	
	Moreover, by stability of~\(T\) there cannot be another such node. Indeed, by contradiction, if there were two such nodes~\(\tau\) and~\(\tau'\), then we would have that~\(\exists^\infty \eta\in T, \eta\bot\tau\), because the successors of~\(\tau'\) are incomparable to~\(\tau\). And since~\(\tau\) already verifies~\(\exists^\infty \eta\in T, \eta\succ\tau\), contradicting the stability of~\(T\).
	
	Therefore we have that~\(\forall^\infty \eta\in T, \eta\succ\tau\), and so depending on whether~\(\tau<_0\sigma\) or~\(\sigma<_0\tau\), we obtain that~\(\forall^\infty\eta\in T, \eta<_0\sigma\) or~\(\forall \eta\in T, \sigma<_0\eta\) respectively. From there we can apply~\(\SADS\) and the proof is exactly like in \Cref{ads-impl-tcac}.
\end{proof}

\begin{corollary}\label[corollary]{cor:tac-stable-low}
	\(\TCAC[stable\ \tce]\) admits low solutions.
\end{corollary}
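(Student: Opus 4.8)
The plan is to make explicit the computable reduction that is implicit in the preceding proposition, and then to invoke the classical fact that $\SADS$ admits low solutions.

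First, I would observe that the proof of the preceding proposition is entirely uniform. Given a computable instance $T$ of $\TCAC[stable\ \tce]$, if $T$ has a node with infinitely many immediate successors we read off a computable --- hence low --- infinite antichain and are done; otherwise $T$ is finitely branching, and fixing an injective computable enumeration of $T$, the order $<_0$ of that proof, transported along this enumeration, is a \emph{computable} linear order, which the argument there shows to be stable (of type $\omega+\omega^*$, or degenerately $\omega+k$ or $k+\omega^*$); note that this verification uses only the pigeonhole principle $\BS^0_2$, which is freely available in the computability-theoretic setting. Moreover, exactly as in the proof of \Cref{ads-impl-tcac}, from any infinite $<_0$-ascending or $<_0$-descending sequence $(\sigma_i)$ one uniformly computes --- with the help of $T$ --- either an infinite $\prec$-path of $T$ or an infinite antichain of $T$. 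Hence $\TCAC[stable\ \tce] \leqslant_c \SADS$, via a computable instance map and a computable solution map.

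Second, I would invoke the well-known fact that $\SADS$ admits low solutions, that is, every computable stable linear order has a low infinite ascending or descending sequence (see, e.g., \cite{hirschfeldt2015slicing}). Chaining the two: for a computable instance $T$ of $\TCAC[stable\ \tce]$, form the computable instance $(T,<_0)$ of $\SADS$, take a low solution of it, and decode it as above; since any set computable from a low set is itself low, the resulting path or antichain of $T$ is low.

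The delicate point is entirely contained in the first step, namely that $(T,<_0)$ is a genuine stable instance of $\SADS$ --- this is exactly the content of the preceding proposition, and rests on the stability of $T$ together with the uniqueness of the infinitely-branching node. The lowness is then inherited wholesale from $\SADS$, so no new forcing or priority argument is needed.
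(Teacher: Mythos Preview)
Your proposal is correct and follows exactly the paper's approach: the paper simply invokes that every instance of $\SADS$ has a low solution (Hirschfeldt--Shore \cite[Theorem 2.11]{hirschfeldt2007combinatorial}) together with the preceding proposition. You spell out the computable-reduction content of that proposition more explicitly than the paper does---in particular, that the decoding of an $\SADS$-solution into a path or antichain is computable in the solution, so lowness is preserved---but the underlying argument is the same.
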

\begin{proof}
	This comes from the fact that any instance of~\(\SADS\) has a low solution, as proven in \cite[Theorem 2.11]{hirschfeldt2007combinatorial}.
\end{proof}

The proof that $\SHER$ follows from $\TCAC[stable]$ will use $\BSig_2$.
We therefore first prove that $\TCAC[stable]$ implies $\BSig_2$ over~$\RCA_0$.

\begin{lemma}\label[lemma]{tcac-impl-bsigma}
	\(\RCA_0\vdash\TCAC[stable]\implies \forall k, \RT^1_k\)
\end{lemma}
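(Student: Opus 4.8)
The plan is to reduce $\forall k,\ \RT^1_k$ to $\TCAC[stable]$ exactly as $\forall k,\ \RT^1_k$ was reduced to $\TCAC[\ce binary]$ in \Cref{tcac-ce-bin-impl-bs2}, but paying attention to stability of the constructed tree. So, given a coloring $f : \NN \to k$, I would first dispatch the easy case: if $\exists i < k,\ \exists^\infty x,\ f(x) = i$, then $\{x \mid f(x) = i\}$ is an infinite computable $f$-homogeneous set and we are done. In the remaining case, $\forall i < k,\ \forall^\infty x,\ f(x) \neq i$.

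In this case I would build a computable (hence \ce) binary tree $T$, via an increasing sequence of finite trees $(T_j)$, just as in \Cref{tcac-ce-bin-impl-bs2}: start with the ``spine'' $T_0 = \{0^i \mid i < k\}$, and at stage $s+1$ adjoin the node $0^{f(s)} \cdot 1^{m+1}$, where $m$ counts the $x < s$ with $f(x) = f(s)$. The point of the case assumption $\forall i < k,\ \forall^\infty x,\ f(x) \neq i$ is that each ``column'' $0^i \cdot 1^{*}$ is finite, so every node $0^i \cdot 1^j$ with $j \geq 1$ eventually has no further extensions added — it becomes, and stays, a leaf. Hence every such node $\sigma$ satisfies $\forall^\infty \tau \in T,\ \sigma \bot \tau$. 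The spine nodes $0^i$ ($i < k$) all satisfy $\forall^\infty \tau \in T,\ \sigma \centernot\bot \tau$ (indeed $\sigma \preccurlyeq 0^i$ or the two are comparable trivially, since every node of $T$ extends some $0^i$ and the spine is linearly ordered — one checks $0^i \centernot\bot \tau$ for all $\tau \in T$). Thus $T$ is stable.

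Now apply $\TCAC[stable]$ to $T$. Every antichain of $T$ is contained in $\{0^i \cdot 1^{m} \mid i < k,\ m \geq 1\}$ together with possibly spine nodes, and picking at most one node per column plus the fact that spine nodes are pairwise comparable shows every antichain has size at most $k$; so there is no infinite antichain. Therefore $T$ has an infinite path, which must go through some column $0^i \cdot 1^{*}$ being infinite, i.e.\ $\exists^\infty x,\ f(x) = i$ — contradicting the case assumption. So the case assumption is impossible, and we have produced, in all cases, an infinite $f$-homogeneous set, proving $\RT^1_k$.

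The only point requiring care — and the ``main obstacle,'' though it is mild — is verifying that the tree $T$ is genuinely stable in the model-relative sense appropriate to $\TCAC[stable]$, i.e.\ that the $\forall^\infty$ quantifiers in the stability condition can be witnessed using only the induction available in $\RCA_0$. This is where the case split is essential: without $\forall i < k,\ \forall^\infty x,\ f(x) \neq i$ one cannot guarantee the columns are finite, and $T$ need not be stable. One should also note that $T$ as constructed is an instance of $\TCAC[stable]$ in the plain (non-\ce) sense since $T$ is computable, so no appeal to $\TCAC[stable\ \tce]$ is needed; and the reduction is uniform, giving both the $\RCA_0$ implication and a computable reduction. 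Finally, the corollary $\RCA_0 \vdash \TCAC[stable] \implies \BSig_2$ follows since $\RCA_0 \vdash (\forall k,\ \RT^1_k) \iff \BSig_2$.
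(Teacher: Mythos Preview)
Your argument has two genuine gaps. First, the tree $T$ you build is only c.e., not computable: deciding whether $0^i\cdot 1^j \in T$ amounts to deciding whether $|\{x : f(x)=i\}| \geq j$, which is $\Sigma^0_1(f)$ but not $\Delta^0_1(f)$ in general. (The tree in \Cref{tcac-ce-bin-impl-bs2} is explicitly called c.e.) So $T$ need not exist as a set in the model, and the plain $\TCAC[stable]$ does not apply to it. Second, and more fundamentally, the spine nodes need not be stable. Your parenthetical claim that $0^i\centernot\bot\tau$ for all $\tau\in T$ is simply false (for instance $0^2\bot 0\cdot 1$); what stability at $0^i$ actually requires is that one of the sets $\{\tau : 0^i\bot\tau\}=\bigcup_{j<i}(\text{column }j)$ and $\{\tau : 0^i\centernot\bot\tau\}=\text{spine}\cup\bigcup_{j\geq i}(\text{column }j)$ be finite. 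Each individual column is finite under the case assumption, but concluding that a union of fewer than $k$ of them is finite is precisely an instance of $\BSig_2$ --- the very principle you are trying to derive.

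The paper avoids both problems by using a different tree, in $\NN^{<\NN}$ rather than $2^{<\NN}$: the strictly increasing strings $\sigma$ such that every $y\leq\max\sigma$ with $f(y)=f(\max\sigma)$ lies in $\ran\sigma$. The defining condition is $\Delta^0_0(f)$, so the tree exists in the model. Crucially, the nodes comparable with a given $\sigma$ are governed by a \emph{single} color class, so under the case assumption every node has only finitely many comparable nodes; hence $\forall^\infty\tau\in T,\ \sigma\bot\tau$ holds for every $\sigma$, and stability follows with no appeal to bounding over~$k$.
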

\begin{proof}
	Let~\(f:\NN\to k\) be a coloring. There are two possibilities:
	Either~\(\exists i<k, \exists^\infty x, f(x)=i\), in which case there is an infinite computable~\(f\)-homogeneous set.
	Otherwise~\(\forall i<k, \forall^\infty x, f(x)\neq i\), in which case we consider the infinite tree
	\[T\coloneqq \{\sigma\in\mathsf{Inc} \mid \forall y\leqslant \max\sigma, f(y)=f(\max\sigma)\implies y\in\ran\sigma\}\]
	where \(\mathsf{Inc}\) is the set of strictly increasing strings of~$\NN^{<\NN}$.
	
	By hypothesis, for every node~$\sigma \in T$, \(\forall^\infty \tau\in T, \sigma\bot\tau\), otherwise $T$ would have an infinite $T$-computable path. Thus, $T$ is stable.
	Moreover, every antichain is of size at most~$k$, thus $T$ is a stable tree with no infinite path and no infinite antichain, contradicting~\(\TCAC[stable]\).
\end{proof}

\begin{proposition}
	Under \(\RCA_0\) the statement \(\TCAC[stable]\) implies \(\SHER\mathsf{\ for\ stable\ colorings}\)
\end{proposition}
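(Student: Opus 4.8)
The plan is to adapt the proof of \Cref{tcac-impl-sher} to the stable setting. Since \Cref{tcac-impl-bsigma} gives \(\RCA_0\vdash\TCAC[stable]\implies\forall k,\RT^1_k\), hence \(\BSig^0_2\), we may freely use \(\BSig^0_2\) throughout the argument. So let \(f:[\NN]^2\to 2\) be semi-hereditary for the color \(i<2\) and moreover stable, and let \(C\coloneqq\{x\mid\lim_y f(x,y)=i\}\) (a class that need not belong to the model, so it will only guide the construction). Semi-heredity applied at a common successor \(z\) that is large enough shows that \(f\) is constantly \(i\) on \(C\): if \(x<y\) both lie in \(C\) then \(f(x,z)=f(y,z)=i\) for all sufficiently large \(z\), so \(f(x,y)=i\). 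Thus, morally, an \(f\)-homogeneous set arises either as an infinite subset of \(C\) (when \(C\) is infinite) or as an antichain witnessing that \(C\) is finite; the point of the construction is to recast this dichotomy as a single \emph{stable} tree to which \(\TCAC[stable]\) can be applied.

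First I would try to run the construction of \Cref{tcac-impl-sher} verbatim, building the tree \(T=\{\sigma_n\mid n\in\NN\}\) of maximal weakly-homogeneous increasing strings. This \(T\) is computable from \(f\), its infinite paths are weakly-homogeneous for \(i\), and its infinite antichains decode to \(f\)-homogeneous sets for \(1-i\). The difficulty is that \(T\) need not be stable: one may have infinitely many \(n\) with \(\{j<n\mid f(j,n)=i\}=\varnothing\), so that \(\sigma_n=\langle n\rangle\) is an isolated leaf, and these isolated leaves can sit next to a single infinite branch, in which case a node on that branch is comparable to infinitely many nodes of \(T\) and incomparable to infinitely many others. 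To repair this I would instead build \(T\) by a stage construction maintaining one distinguished branch, which is extended by one more element exactly when the approximation to \(f\) reveals a new weakly-homogeneous-for-\(i\) extension, and which hangs off each node of the distinguished branch only \emph{finitely} many leaves, each such leaf recording a potential member of a \((1-i)\)-homogeneous antichain (taking over the role of the former isolated leaves). The verification that this \(T\) is infinite and stable — every node either lies on the distinguished branch and is comparable to cofinitely many nodes of \(T\), or is off it and comparable to only finitely many — is where \(\BSig^0_2\) enters, being used to bound both the leaves attached to a branch node and the ``dead'' segments produced by the greedy search.

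Granting such a stable \(T\), the conclusion follows from \(\TCAC[stable]\) exactly as in \Cref{tcac-impl-sher}: if \(T\) has an infinite path, it is weakly-homogeneous for \(i\) by construction, so \Cref{weaksher} converts it into an infinite \(f\)-homogeneous set; if \(T\) has an infinite antichain, then after the appropriate decoding it yields an infinite set \(H\) that is \(f\)-homogeneous for \(1-i\), using that each \(\sigma_n\) contains every \(y<n\) with \(f(y,n)=i\), so that \(f(n_s,n_t)=i\) would force \(\sigma_{n_s}\prec\sigma_{n_t}\), contradicting the antichain. The main obstacle is precisely the construction of \(T\) and the proof of its stability: in \Cref{tcac-impl-sher} the naive tree works because nothing needs to be stable, whereas here the uncontrolled proliferation of short dead branches destroys stability, and taming it — keeping every off-branch portion finite and accounting for it — is exactly what forces the use of \(\BSig^0_2\).
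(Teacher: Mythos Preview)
Your diagnosis that the tree~\(T\) from \Cref{tcac-impl-sher} can fail to be stable is correct, and your example of isolated length-one leaves sitting alongside an infinite branch is genuine. But you miss the observation that makes the whole problem disappear: this failure can only occur when \(C=\{x\mid\lim_y f(x,y)=i\}\) is finite. When \(C\) is infinite, the \emph{original} tree from \Cref{tcac-impl-sher} is already stable, and this is exactly what the paper verifies. If \(n\in C\), then for all large \(p\) one has \(f(n,p)=i\), so \(n\) appears in \(\sigma_p\) and hence \(\sigma_n\preceq\sigma_p\). If \(n\notin C\), pick any \(p>n\) with \(p\in C\); then \(\sigma_p\) is below cofinitely many nodes of \(T\), and since \(p>n\) forces \(\sigma_n\not\succ\sigma_p\), either \(\sigma_n\preceq\sigma_p\) or \(\sigma_n\bot\sigma_p\), and in either case \(\sigma_n\) inherits the required limit behaviour from \(\sigma_p\).

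The paper therefore does a simple case split: if \(C\) is finite, all but finitely many integers have limit color \(1-i\), and \(\BSig_2^0\) alone produces an infinite \((1-i)\)-homogeneous set with no tree needed; if \(C\) is infinite, run \Cref{tcac-impl-sher} verbatim and check stability as above. Your unified construction with a distinguished branch and controlled side-leaves is not needed, and since you explicitly leave its definition and stability verification as ``the main obstacle'', the proposal as written is incomplete. The case split on the size of \(C\) is both simpler and fully rigorous.
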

\begin{proof}
	Let~\(f:[\NN]^2\to 2\) be a stable coloring, semi-hereditary for the color~\(i\). We distinguish two cases. Either there are finitely many integers with limit color~\(i\), meaning we can ignore them and use~\(\BS_2^0\) (\Cref{tcac-impl-bsigma}) to compute an infinite homogeneous set (see \cite[Proposition 6.2]{dzhafarov2020reduction}).
	Otherwise there are infinitely many integers whose limit color is~\(i\), in which case we use the same proof as in \Cref{tcac-impl-sher}, but we must prove the tree~\(T\) we construct is stable. So let~\(\sigma_n\) be a node of this tree. 
	
	Suppose first~\(n\) has limit color~\(i\). Let~$p$ be sufficiently large so that $f(n, p) = i$. As explained in \Cref{tcac-impl-sher}, $\sigma_p$ contains all the integers~$m < p$ such that $f(m, p) = i$. It follows that $n \in \sigma_p$. Moreover, if $n \in \sigma_p$, then $\sigma_n \preceq \sigma_p$. Thus,~\(\forall^\infty p, \sigma_n\prec\sigma_p\).
	
	Suppose now~\(n\) has limit color~\(1-i\), then since there are infinitely many integers with limit color~\(i\), there is one such that~\(p>n\). In particular,~\(\sigma_p\) verifies~\(\forall^\infty \tau\in T, \sigma_p\prec\tau\). Thus, if~\(\sigma_n\prec\sigma_p\) then~\(\forall^\infty \tau\in T, \sigma_n\prec\tau\), and if~\(\sigma_n\bot\sigma_p\) then~\(\forall^\infty \tau\in T, \sigma_n\bot\tau\).
\end{proof}

\begin{proposition} Under
	\(\RCA_0\) the statement \(\SHER\mathsf{\ for\ stable\ colorings}\) implies \(\TCAC[stable\ \tce]\)
\end{proposition}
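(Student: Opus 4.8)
The plan is to mimic the proof of \Cref{sher-impl-tcac} almost verbatim, the single new ingredient being that stability of the tree transfers to stability of the induced coloring. So let \(T\subseteq\NN^{<\NN}\) be an infinite \ce tree which is stable, and build the injection \(\psi:\NN\to T\) exactly as in \Cref{sher-impl-tcac}, via the prime-power bijection \(\varphi\): let \(\psi(n)\) be the first string \(\sigma\) appearing in the enumeration of \(T\) with \(\varphi(\sigma)\) larger than \(\varphi(\psi(0)),\dots,\varphi(\psi(n-1))\). As recorded there, \(\psi\) is injective, its range \(R\coloneqq\ran\psi\) is an infinite computable subset of \(T\), and \(\sigma\prec\tau\in R\) implies \(\psi^{-1}(\sigma)<\psi^{-1}(\tau)\). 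Define \(f:[\NN]^2\to 2\) by \(f(\{x,y\})=1\) iff \(x<_\NN y\) and \(\psi(x)\prec\psi(y)\); the argument of \Cref{sher-impl-tcac} shows that \(f\) is semi-hereditary for the color \(1\).

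The key new step is to check that \(f\) is stable. Fix \(x\) and set \(\sigma\coloneqq\psi(x)\). Since \(\psi\) is injective with range \(R\), for every \(N\) the set \(\{\psi(y)\mid y>N\}\) equals \(R\) minus a finite set, so \(\psi(y)\) eventually avoids any prescribed finite subset of \(R\). Because \(R\subseteq T\) and \(T\) is stable, exactly one of two cases occurs. If only finitely many \(\tau\in T\) satisfy \(\sigma\centernot\bot\tau\), then for all large \(y\) we have \(\psi(y)\bot\sigma\), hence \(f(x,y)=0\). Otherwise only finitely many \(\tau\in T\) satisfy \(\sigma\bot\tau\); since moreover only the at most \(|\sigma|+1\) prefixes of \(\sigma\) satisfy \(\tau\preccurlyeq\sigma\), cofinitely many \(\tau\in T\) satisfy \(\sigma\prec\tau\), so \(\sigma\prec\psi(y)\) for all large \(y\), giving \(f(x,y)=1\). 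In either case \(\lim_y f(x,y)\) exists, so \(f\) is a stable coloring.

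It then remains to apply \(\SHER\mathsf{\ for\ stable\ colorings}\) to \(f\) to obtain an infinite \(f\)-homogeneous set \(H\), and to conclude as in \Cref{sher-impl-tcac}: if \(H\) is homogeneous for \(0\) then \(\psi(H)\) is an infinite antichain of \(T\); if \(H\) is homogeneous for \(1\) then the strings \(\psi(h)\), \(h\in H\), form a \(\prec\)-increasing sequence, so \(\psi(H)\) is an infinite path of \(T\). In both cases we have produced a solution to the instance \(T\) of \(\TCAC[stable\ \tce]\).

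I do not anticipate a genuine obstacle: the only delicate point is the bookkeeping with the \(\forall^\infty\) quantifiers over the (possibly proper) subset \(R\) in the stability verification, together with confirming that deriving ``\(f\) is stable'' from ``\(T\) is stable'' is elementary enough to go through in \(\RCA_0\) — which it is, as every step is a finite/cofinite manipulation using no induction beyond what \(\RCA_0\) provides. The same uniform argument also yields \(\TCAC[stable\ \tce]\leqslant_c\SHER\mathsf{\ for\ stable\ colorings}\).
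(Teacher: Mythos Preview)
Your proposal is correct and follows essentially the same approach as the paper: reuse the construction of \Cref{sher-impl-tcac} and add the single verification that stability of \(T\) yields stability of \(f\), then apply \(\SHER\) for stable colorings and conclude as before. Your stability check is in fact slightly more careful than the paper's, since you explicitly rule out the finitely many \(\tau\preccurlyeq\sigma\) when passing from ``\(\psi(y)\) is comparable to \(\sigma\)'' to ``\(\sigma\prec\psi(y)\)''.
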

\begin{proof}
	Let \(T\subseteq\NN^{<\NN}\) be an infinite stable \ce tree. The proof is the same as in \Cref{sher-impl-tcac}, but we must verify that the coloring \(f:[\NN]^2\to 2\) defined is stable. Given \(x\in\NN\), we claim that \(\exists i<2, \forall^\infty y f(x, y)=i\). Since \(T\) is stable, either \(\forall^\infty y, \psi(x)\centernot{\bot}\psi(y)\) or \(\forall^\infty y, \psi(x)\bot\psi(y)\) holds. In the first case \(\forall^\infty y, f(x, y)=1\), and in the second one \(\forall^\infty y, f(x, y)=0\). Thus the coloring is stable, and the proof can be carried on.
\end{proof}

\begin{corollary}
The following are equivalent over~$\RCA_0$:
\begin{enumerate}
	\item[(1)] $\TCAC[stable]$
	\item[(2)] $\TCAC[stable\ \tce]$
	\item[(3)] $\SHER\mathsf{\ for\ stable\ colorings}$
\end{enumerate}
\end{corollary}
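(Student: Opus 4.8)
The plan is to derive the corollary by closing a cycle of implications $(1) \Rightarrow (3) \Rightarrow (2) \Rightarrow (1)$ over $\RCA_0$. Two of these three arrows are already in hand: $(1) \Rightarrow (3)$ is the proposition established immediately above asserting that $\TCAC[stable]$ implies $\SHER$ for stable colorings, and $(3) \Rightarrow (2)$ is the proposition asserting that $\SHER$ for stable colorings implies $\TCAC[stable\ \tce]$. So the only arrow left to verify is $(2) \Rightarrow (1)$.

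For $(2) \Rightarrow (1)$, I would argue exactly as in the step $(2) \Rightarrow (1)$ of \Cref{big-equiv}. Let $T \subseteq \NN^{<\NN}$ be an infinite stable subtree existing as a set in the model. Then $T$ is in particular an infinite stable \ce subtree of $\NN^{<\NN}$ — any set is trivially \ce, with the model-relative reading of that notion noted in the remark after \Cref{tcac2} — and it is of course still stable. Applying $\TCAC[stable\ \tce]$ to $T$ produces an infinite path or an infinite antichain of $T$, which is precisely what $\TCAC[stable]$ asks of $T$. Chaining the three arrows then gives the equivalence of $(1)$, $(2)$ and $(3)$ over $\RCA_0$.

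I do not expect a genuine obstacle at this stage: all the substance has been pushed into the three preceding propositions, the truly one-directional step being $(1) \Rightarrow (3)$, which quietly invokes $\BS^0_2$ — extracted from $\TCAC[stable]$ via \Cref{tcac-impl-bsigma} — to deal with the case of only finitely many integers with limit color $i$. The one point worth flagging is why the cycle is organized this way instead of proving $(1) \Rightarrow (2)$ head-on: just as in \Cref{big-equiv}, where $\TCAC \Rightarrow \TCAC[\tce]$ needed the coding device of \Cref{implyce}, passing from a set-tree to its \ce-tree analogue is not automatic in the forward direction, so routing $(1) \Rightarrow (2)$ through $(3)$ is the clean way to recover it for free.
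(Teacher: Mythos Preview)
Your proposal is correct and matches the paper's intended argument: the corollary is left without proof precisely because the cycle $(1)\Rightarrow(3)\Rightarrow(2)\Rightarrow(1)$ follows immediately from the two preceding propositions together with the trivial observation that every tree in the model is in particular a \ce tree. Your remark on routing $(1)\Rightarrow(2)$ through $(3)$ rather than adapting \Cref{implyce} to the stable setting is a sound piece of exposition, though not strictly needed.
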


\section{Conclusion}\label[section]{sect:conclu}

The following figure sums up the implications proved in this paper. All implications hold both in $\RCA_0$ and over the computable reduction.

\begin{center}
	\begin{tikzcd}
		\textsf{EM} \arrow[r]            & \textsf{CAC for trees} \arrow[rd] \arrow[d, bend right]     & \textsf{ADS} \arrow[l]  \\
		\textsf{TAC} + \BS^0_2 \arrow[r] \arrow[d] & \textsf{CAC for \ce trees} \arrow[d] \arrow[u, bend right] & \textsf{SHER} \arrow[l] \\
		\textsf{HYP}                     & \textsf{CAC for \ce binary trees} \arrow[lu]               &                        
	\end{tikzcd}
\end{center}

\quad

\quad

\begin{center}
	\begin{tikzcd}
		{\TCAC[stable\ \tce]} \arrow[d] & \SADS \arrow[l]                                   \\
		{\TCAC[stable]} \arrow[r]       & \SHER\mathsf{\ for\ stable\ colorings} \arrow[lu]
	\end{tikzcd}
\end{center}

\quad

\quad

We have established in \Cref{big-equiv} the equivalence between \(\TAC+\BS^0_2\) and other statements.

\begin{question}
	What is the first-order part of \(\TAC\)?
\end{question}

Recall that, by \Cref{nolowsol}, for every fixed low set~$X$, there is a computable instance of~\(\TAC\) 
with no $X$-computable solution. By computable equivalence, this property also holds for~\(\TCAC\). 
It is however unknown whether \Cref{nolowsol} can be improved to defeat all low sets simultaneously.

\begin{question}
Does every computable instance of~\(\TCAC\) admit a low solution?
\end{question}

Note that by \Cref{cor:tac-stable-low}, any negative witness to the previous question would yield a non-stable tree.

We have also seen by \Cref{prop:tcac-dnch} that for every computable instance $T$ of~\(\TCAC\),
every computably bounded \(\DNC\) function relative to~$\emptyset'$ computes a solution to~$T$. The natural question would be whether we can improve this result to any~\(\DNC\) function relative to~$\emptyset'$.

\begin{question}
Is there some $X$ such that for every computable instance $T$ of~\(\TCAC\),
every \(\DNC\) function relative to~$X$ computes a solution to~$T$?
\end{question}

Note that in the case of a computable set~$X$, the answer is negative, as there exist~\(\DNC\) functions of low degree.

Finally, recall from \Cref{rk-question} the following question:
\begin{question}
	Is \(\RT_2^2\) computably reducible to the statement ``for any 2-coloring of pairs, there exists an infinite set which is semi-hereditary for some color"?
\end{question}

\bibliographystyle{asl.bst}
\bibliography{bibliography.bib}

@Article{phpbs2,
  author = {D Belanger and C Chong and W Li and W Wang and Y Yang},
  journal = {Transactions of the AMS},
  title = {{Where pigeonhole principles meet K\"onig lemmas}},
  year = {2021},
  volume = {374},
  pages = {8275--8303},
}

@Article{binns2014,
  author    = {Binns, Stephen and Kjos-Hanssen, Bjørn and Lerman, Manuel and Schmerl, James H. and Solomon, Reed},
  title     = {Self-embeddings of computable trees},
  year      = {2014},
  copyright = {arXiv.org perpetual, non-exclusive license},
  doi       = {10.48550/ARXIV.1408.2286},
  keywords  = {Logic (math.LO), FOS: Mathematics, FOS: Mathematics, 03D},
  publisher = {arXiv},
  url       = {https://arxiv.org/abs/1408.2286},
}

@Article{cholak2001strength,
  author        = {Cholak, Peter A. and Jockusch, Carl G. and Slaman, Theodore A.},
  journal       = {Journal of Symbolic Logic},
  title         = {{On the strength of Ramsey's theorem for pairs}},
  year          = {2001},
  number        = {01},
  pages         = {1--55},
  volume        = {66},
  date-modified = {2013-05-06 09:54:39 +0000},
  publisher     = {Cambridge Univ Press},
}

@Article{chong2010role,
  author  = {Chong, C. and Lempp, Steffen and Yang, Yue},
  journal = {Proceedings of the American Mathematical Society},
  title   = {{On the role of the collection principle for $\Sigma^0_2$-formulas in second-order reverse mathematics}},
  year    = {2010},
  number  = {3},
  pages   = {1093--1100},
  volume  = {138},
}

@Article{conidistac,
  author = {Conidis, Chris J.},
  note   = {Submitted.},
  title  = {{Computability and combinatorial aspects of minimal prime ideals in noetherian rings}}
}

@Unpublished{astor20xxweakness,
  author = {Eric P. Astor \and Laurent Bienvenu \and Damir Dzhafarov \and Ludovic Patey \and Paul Shafer \and Reed Solomon \and Linda Brown Westrick},
  note 	 = {In preparation.},
  title  = {The weakness of Typicality},
}

@Unpublished{dorais2012hajnal,
  author = {Dorais, Fran{\c{c}}ois G.},
  title  = {On a theorem of Hajnal and Sur\'{a}nyi},
  year   = {2012},
}

@Article{dorais2016uniform,
  author    = {Dorais, Fran{\c{c}}ois G. and Dzhafarov, Damir D. and Hirst, Jeffry L. and Mileti, Joseph R. and Shafer, Paul},
  journal   = {Trans. Amer. Math. Soc.},
  title     = {On uniform relationships between combinatorial problems},
  year      = {2016},
  issn      = {0002-9947},
  number    = {2},
  pages     = {1321--1359},
  volume    = {368},
  doi       = {10.1090/tran/6465},
  fjournal  = {Transactions of the American Mathematical Society},
  mrclass   = {03B30 (03D32 03D80 03F35 05D10 05D15 05D40)},
  mrnumber  = {3430365},
  owner     = {ludovic},
  timestamp = {2015.12.29},
  url       = {http://dx.doi.org/10.1090/tran/6465},
}

@Article{flood2012reverse,
  author        = {Stephen Flood},
  journal       = {Journal of Symbolic Logic},
  title         = {Reverse mathematics and a {R}amsey-type {K\"o}nig's Lemma},
  year          = {2012},
  number        = {4},
  pages         = {1272--1280},
  volume        = {77},
  date-added    = {2013-04-29 11:42:40 +0000},
  date-modified = {2013-04-30 13:09:52 +0000},
  owner         = {ludovic},
  timestamp     = {2014.10.10},
}

@Book{hirschfeldt2015slicing,
  author     = {Hirschfeldt, Denis R.},
  publisher  = {World Scientific Publishing Co. Pte. Ltd., Hackensack, NJ},
  title      = {Slicing the truth},
  year       = {2015},
  isbn       = {978-981-4612-61-6},
  note       = {On the computable and reverse mathematics of combinatorial principles, Edited and with a foreword by Chitat Chong, Qi Feng, Theodore A. Slaman, W. Hugh Woodin and Yue Yang},
  series     = {Lecture Notes Series. Institute for Mathematical Sciences. National University of Singapore},
  volume     = {28},
  mrclass    = {03-02 (03B30 03F35)},
  mrnumber   = {3244278},
  mrreviewer = {Fran{\c{c}}ois G. Dorais},
  owner      = {ludovic},
  pages      = {xvi+214},
  timestamp  = {2015.12.29},
}

@Article{hirschfeldt2007combinatorial,
  author        = {Hirschfeldt, Denis R. and Shore, Richard A.},
  journal       = {Journal of Symbolic Logic},
  title         = {{Combinatorial principles weaker than {R}amsey's theorem for pairs}},
  year          = {2007},
  number        = {1},
  pages         = {171--206},
  volume        = {72},
  date-added    = {2013-05-03 11:30:31 +0000},
  date-modified = {2013-05-06 09:55:00 +0000},
  owner         = {ludovic},
  publisher     = {Association for Symbolic Logic},
  timestamp     = {2014.10.10},
}

@Article{hirschfeldt2009atomic,
  author        = {Hirschfeldt, Denis R. and Shore, Richard A. and Slaman, Theodore A.},
  journal       = {Transactions of the American Mathematical Society},
  title         = {The atomic model theorem and type omitting},
  year          = {2009},
  number        = {11},
  pages         = {5805--5837},
  volume        = {361},
  date-added    = {2013-05-03 11:30:31 +0000},
  date-modified = {2013-05-11 17:02:55 +0000},
  owner         = {ludovic},
  timestamp     = {2014.10.10},
}

@Article{jockusch1972ca,
  author  = {Carl G. Jockusch and Robert Irving Soare},
  journal = {Transactions of the American Mathematical Society},
  title   = {$\Pi^0_1$ classes and degrees of theories},
  year    = {1972},
  pages   = {33-56},
  volume  = {173},
}

@Article{lerman2013separating,
  author    = {Lerman, Manuel and Solomon, Reed and Towsner, Henry},
  journal   = {Journal of Mathematical Logic},
  title     = {{Separating principles below {R}amsey's theorem for pairs}},
  year      = {2013},
  number    = {02},
  pages     = {1350007},
  volume    = {13},
  publisher = {World Scientific},
}

@Book{simpson2009subsystems,
  author        = {Simpson, Stephen G.},
  publisher     = {Cambridge University Press},
  title         = {{Subsystems of Second Order Arithmetic}},
  year          = {2009},
  date-added    = {2013-04-30 13:07:00 +0000},
  date-modified = {2013-04-30 13:10:10 +0000},
  owner         = {ludovic},
  rating        = {0},
  read          = {Yes},
  timestamp     = {2014.10.10},
  uri           = {\url{papers2://publication/uuid/E305DFA9-4EDE-4E04-89EF-639BF16B2DB9}},
}

@Misc{patey2019ramsey,
  author    = {Ludovic Patey},
  title     = {Ramsey-like theorems and moduli of computation},
  year      = {2019},
  eprint    = {arXiv:1901.04388},
  owner     = {ludovic},
  timestamp = {2019.05.07},
}

@Misc{dzhafarov2020reduction,
  author = {Damir D. Dzhafarov and Denis R. Hirschfeldt and Sarah C. Reitzes},
  title  = {Reduction games, provability, and compactness},
  year   = {2020},
  eprint = {arXiv:2008.00907},
}

@Article{herrmann2001infinite,
  author     = {Herrmann, E.},
  journal    = {J. Symbolic Logic},
  title      = {Infinite chains and antichains in computable partial orderings},
  year       = {2001},
  issn       = {0022-4812},
  number     = {2},
  pages      = {923--934},
  volume     = {66},
  doi        = {10.2307/2695053},
  fjournal   = {The Journal of Symbolic Logic},
  mrclass    = {03D45},
  mrnumber   = {1833487},
  mrreviewer = {C. G. Jockusch, Jr.},
  url        = {https://doi.org/10.2307/2695053},
}

\end{document}